\newtheorem{lemma}{Lemma}[section]
\newtheorem{corollary}[lemma]{Corollary}
\newtheorem{proposition}[lemma]{Proposition}
\newtheorem{theorem}[lemma]{Theorem}
\providecommand{\N}{{\ensuremath{\mathbbm{N}}}}
\providecommand{\R}{{\ensuremath{\mathbbm{R}}}}
\providecommand{\N}{{\ensuremath{\mathbbm{N}}}}
\providecommand{\R}{{\ensuremath{\mathbbm{R}}}}
\begin{document}

\title{On the Alekseev-Gr\"obner formula in Banach spaces}
\author{Arnulf Jentzen$^{1} $, Felix Lindner$^{2} $, and Primo\v{z} Pu\v{s}nik$^{3} $
\bigskip 
\\
\small{$^1$ Seminar for Applied Mathematics, Department of Mathematics,}
\\
\small{ETH Zurich, Switzerland, 
email: arnulf.jentzen@sam.math.ethz.ch} 
\smallskip
\\
\small{$^2$  Institute of Mathematics, Faculty of Mathematics and Natural Sciences,}
\\
\small{University of Kassel, Germany,
email: lindner@mathematik.uni-kassel.de}
\smallskip
\\
\small{$^3$ Seminar for Applied Mathematics, Department of Mathematics,}
\\
\small{ETH Zurich, Switzerland,
	email: primoz.pusnik@sam.math.ethz.ch} 
}


\maketitle

\begin{abstract}
	The Alekseev-Gr\"obner formula 
	is a well known tool
	in numerical analysis
	for describing the effect 
	that a perturbation of an ordinary differential equation (ODE) has on its solution.  
	In this article we
	provide an extension of the
	Alekseev-Gr\"obner formula
	for Banach space valued ODEs
	under,
	loosely speaking, mild conditions on the perturbation 
	of the considered ODEs. 
\end{abstract}

\tableofcontents
%
\newpage  
\section{Introduction}
%
%
%
The Alekseev-Gr\"obner formula
(see, e.g., Alekseev~\cite{Alekseev1961},  
Gr\"obner~\cite{Grobner1960},
and
Hairer et al.\ \cite[Theorem~14.5 in Chapter~I]{HairerNonstiffProblems})
is a well known tool
in deterministic numerical analysis
for describing the effect 
that a perturbation of an ordinary differential equation (ODE) has on its solution.  
Considering numerical methods for ODEs
as  
appropriate perturbations
of the underlying equations
makes the Alekseev-Gr\"obner formula
applicable for estimating errors of numerical 
methods 
(see, e.g., 
Hairer et al.\ \cite[Theorem~7.9 in Chapter~II]{HairerNonstiffProblems}, 
Iserles~\cite[Theorem~3.7]{Iserles2008},
Iserles~\cite[Theorem~1]{Iserles1993}, 
and
Niesen~\cite[Theorem~1]{Niesen2003}).
It is the main contribution of this work to 
provide an extension of the
Alekseev-Gr\"obner formula
for Banach space valued ODEs 
under,
loosely speaking, mild conditions on the perturbation 
of the considered ODEs
(see Corollary~\ref{corollary:Alekseev_grobner}
in Section~\ref{subsection:AG}
and
Theorem~\ref{theorem:Alekseev_grobner}
below). 
As a consequence, our main result is well suited for the analysis of pathwise approximation errors between 
exact solutions of
stochastic partial differential equations (SPDEs) of evolutionary type
and
their numerical approximations.
 In particular, it can be used as a key ingredient for establishing strong convergence rates for numerical approximations of SPDEs with a
 non-globally Lipschitz continuous,
 non-globally monotone nonlinearity,
 and additive trace-class noise.
The precise result will be the subject of a future research article.
In this introductory section we now present 
our main result. 
Theorem~\ref{theorem:Alekseev_grobner} is proven as Corollary~\ref{corollary:Alekseev_grobner} 
in Section~\ref{subsection:AG}
below. 
\begin{theorem}
	\label{theorem:Alekseev_grobner}  
	Let 
	$ (V, \left\| \cdot \right\|_V) $ 
	be a nontrivial $ \R $-Banach space, 
	let
	$ T \in (0, \infty) $,
	let 
	$ f \colon [0, T] \times V \to V $
	be a continuous function,
	assume for all
	$ t \in [0,T] $ that
	$ V \ni x \mapsto f(t,x) \in V $
	is Fr\'echet differentiable, 
	assume that
	$ [0,T] \times V \ni (t,x) \mapsto 
	( \frac{ \partial }{ \partial x }
	f ) (t,x) \in L(V) $ is continuous,
	for every
	$ x \in V $,
	$ s \in [0,T] $ 
	let
	$ X^x_{s, (\cdot)} \colon [s,T] \to V $
	be a continuous function
	which satisfies
	for all 
	$ t \in [s,T] $ 
	that 
	$ X_{s,t}^x = 
	x
	+
	\int_s^t f( \tau , X_{s, \tau }^x) \,d\tau $,
	and
	let
	$ Y, E \colon [0,T] \to V $
	be strongly measurable functions
	which satisfy for all $ t \in [0,T] $ that
	$ \int_0^T 
	[
	\| f ( \tau, Y_\tau ) \|_V 
	+
	\| E_\tau \|_V 
	]
	\, d\tau < \infty $
	and
	$ Y_t 
	= Y_0 + \int_0^t  
	[ f(\tau, Y_\tau ) + E_\tau ]  
	\,
	d\tau $. 
	Then
	\begin{enumerate}[(i)]
		\item \label{item:Intro1} 
		it holds 
		for all
		$ s \in [0,T] $,
		$ t \in [s,T] $
		that 
		$ V \ni x
		\mapsto X_{s,t}^x \in V $ 
		is
		Fr\'echet
		differentiable,
		\item it holds 
		for all 
		$ t \in [0,T] $
		that
		$ [0,t] \ni \tau
		\mapsto
		 (
		\tfrac{\partial}{\partial x} X_{\tau, t}^{Y_\tau} 
		 ) E_\tau
		\in V $
		is strongly
		measurable,
		\item it holds for all 
		$ t \in [0,T] $
		that
		$ \int_{0}^t
		 \| 
		 (
		\tfrac{\partial}{\partial x} X_{\tau, t}^{Y_\tau} 
		 ) E_\tau
		 \|_V
		\, 
		d\tau < \infty $,
		and
		\item \label{item:Intro2}
		it holds
		for all $ s \in [0, T] $,
		$ t \in [s,T] $ 
		that
		\begin{equation}
		Y_t 
		= 
		X_{s, t}^{Y_s}
		+
		\int_{s}^t 
		(
		\tfrac{\partial}{\partial x} X_{\tau, t}^{Y_\tau} 
		 ) E_\tau
		\, 
		d\tau. 
		\end{equation}
	\end{enumerate}
\end{theorem}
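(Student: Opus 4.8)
The plan is to first record three properties of the flow $X$ and then to derive item~(iv) from them through a telescoping argument. (a)~For all $0\le s\le t\le T$ the map $V\ni x\mapsto X^x_{s,t}\in V$ is Fr\'echet differentiable and its derivative solves the linearized integral equation $(\tfrac{\partial}{\partial x}X^x_{s,t})=\mathrm{Id}_V+\int_s^t(\tfrac{\partial}{\partial x}f)(\tau,X^x_{s,\tau})\,(\tfrac{\partial}{\partial x}X^x_{s,\tau})\,d\tau$; this is item~(i), and I would prove it in the classical way, solving the linear equation (whose coefficient $\tau\mapsto(\tfrac{\partial}{\partial x}f)(\tau,X^x_{s,\tau})$ is continuous, hence bounded, on $[s,t]$) by a Picard/Gronwall argument and then verifying $\|X^{x+h}_{s,t}-X^x_{s,t}-(\tfrac{\partial}{\partial x}X^x_{s,t})h\|_V=o(\|h\|_V)$ via Gronwall, the first-order Taylor estimate for $f$, and uniform continuity of $\tfrac{\partial}{\partial x}f$ on compact sets. (b)~The cocycle identity $X^{X^x_{s,u}}_{u,t}=X^x_{s,t}$ for $0\le s\le u\le t\le T$, which follows from uniqueness of solutions of the integral equation (valid since $V\ni y\mapsto f(t,y)\in V$ is locally Lipschitz, by continuity of $\tfrac{\partial}{\partial x}f$, and the solutions are assumed globally defined and continuous). (c)~Joint continuity of $(s,x,t)\mapsto X^x_{s,t}\in V$ and of $(s,x,t)\mapsto(\tfrac{\partial}{\partial x}X^x_{s,t})\in L(V)$ on $\{(s,x,t):0\le s\le t\le T,\,x\in V\}$, again by Gronwall estimates on the respective integral equations together with~(a).

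Items~(ii) and~(iii) then follow quickly. Since $[0,T]\ni t\mapsto\int_0^t[f(\tau,Y_\tau)+E_\tau]\,d\tau\in V$ is the indefinite Bochner integral of a function in $L^1([0,T];V)$, the map $Y$ is continuous; hence $\tau\mapsto f(\tau,Y_\tau)$ is continuous and $\{Y_\tau:\tau\in[0,T]\}$ is compact. Fixing $t\in[0,T]$ and composing the continuous map $[0,t]\ni\tau\mapsto Y_\tau\in V$ with the continuous map $(\tau,x)\mapsto(\tfrac{\partial}{\partial x}X^x_{\tau,t})$ from~(c) shows that $[0,t]\ni\tau\mapsto(\tfrac{\partial}{\partial x}X^{Y_\tau}_{\tau,t})\in L(V)$ is continuous; multiplying by the strongly measurable function $E$ yields the strong measurability claimed in item~(ii). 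Moreover $C_t:=\sup_{\tau\in[0,t]}\|(\tfrac{\partial}{\partial x}X^{Y_\tau}_{\tau,t})\|_{L(V)}<\infty$ by continuity and compactness of $\{(\tau,Y_\tau):\tau\in[0,t]\}$, so $\int_0^t\|(\tfrac{\partial}{\partial x}X^{Y_\tau}_{\tau,t})E_\tau\|_V\,d\tau\le C_t\int_0^t\|E_\tau\|_V\,d\tau<\infty$, which is item~(iii).

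For item~(iv), fix $s\in[0,T]$, $t\in[s,T]$ and set $g(\tau):=X^{Y_\tau}_{\tau,t}$ for $\tau\in[s,t]$, so that $g(s)=X^{Y_s}_{s,t}$ and, by~(b) with $u=t$, $g(t)=X^{Y_t}_{t,t}=Y_t$. For $s\le\tau\le\sigma\le t$, the cocycle identity rewrites $g(\tau)=X^{X^{Y_\tau}_{\tau,\sigma}}_{\sigma,t}$, and the mean value theorem for the Fr\'echet differentiable map $x\mapsto X^x_{\sigma,t}$ (applicable since, by~(c), $\tfrac{\partial}{\partial x}X^{(\cdot)}_{\sigma,t}$ is continuous along the connecting segment) gives $g(\sigma)-g(\tau)=A_{\tau,\sigma}\,(Y_\sigma-X^{Y_\tau}_{\tau,\sigma})$, where $A_{\tau,\sigma}:=\int_0^1(\tfrac{\partial}{\partial x}X^{w_{\tau,\sigma}(\theta)}_{\sigma,t})\,d\theta\in L(V)$ and $w_{\tau,\sigma}(\theta):=X^{Y_\tau}_{\tau,\sigma}+\theta\,(Y_\sigma-X^{Y_\tau}_{\tau,\sigma})$. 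Subtracting the integral equation for $X^{Y_\tau}_{\tau,(\cdot)}$ from the one for $Y$ gives $Y_\sigma-X^{Y_\tau}_{\tau,\sigma}=\int_\tau^\sigma[f(r,Y_r)-f(r,X^{Y_\tau}_{\tau,r})]\,dr+\int_\tau^\sigma E_r\,dr$, in which the first summand is the lower-order term. Consequently, for any partition $s=\tau_0<\dots<\tau_n=t$, telescoping $g(t)-g(s)$ and subtracting $\int_s^t(\tfrac{\partial}{\partial x}X^{Y_r}_{r,t})E_r\,dr=\sum_i\int_{\tau_i}^{\tau_{i+1}}(\tfrac{\partial}{\partial x}X^{Y_r}_{r,t})E_r\,dr$ yields
\begin{align*}
&\Big\|Y_t-X^{Y_s}_{s,t}-\int_s^t(\tfrac{\partial}{\partial x}X^{Y_r}_{r,t})E_r\,dr\Big\|_V\\
&\quad\le\Big(\sup_i\|A_{\tau_i,\tau_{i+1}}\|_{L(V)}\Big)\sum_i\int_{\tau_i}^{\tau_{i+1}}\big\|f(r,Y_r)-f(r,X^{Y_{\tau_i}}_{\tau_i,r})\big\|_V\,dr\\
&\qquad+\sum_i\int_{\tau_i}^{\tau_{i+1}}\big\|A_{\tau_i,\tau_{i+1}}-(\tfrac{\partial}{\partial x}X^{Y_r}_{r,t})\big\|_{L(V)}\|E_r\|_V\,dr.
\end{align*}

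To finish item~(iv) I would let the mesh of the partition tend to $0$. All points $X^{Y_a}_{a,b}$ ($0\le a\le b\le T$), all points $Y_a$ ($a\in[0,T]$), and all interpolation points $w_{\tau_i,\tau_{i+1}}(\theta)$ lie in fixed compact subsets of $V$ (continuous images of compact sets, using~(c) and continuity of $Y$), on which both $f$ and $(s,x)\mapsto(\tfrac{\partial}{\partial x}X^x_{s,t})$ are uniformly continuous. Since in addition $\sup_i\sup_{r\in[\tau_i,\tau_{i+1}]}\|Y_r-X^{Y_{\tau_i}}_{\tau_i,r}\|_V\to0$ as the mesh tends to $0$ — this supremum being at most $(\mathrm{mesh})\cdot\big[\sup_{a\in[0,T]}\|f(a,Y_a)\|_V+\sup_{0\le a\le b\le T}\|f(b,X^{Y_a}_{a,b})\|_V\big]+\sup\big\{\int_c^d\|E_r\|_V\,dr:0\le c\le d\le T,\,d-c\le\mathrm{mesh}\big\}$, whose last term vanishes by absolute continuity of the integral of $\|E\|_V$ — it follows that $\sup_i\sup_{r\in[\tau_i,\tau_{i+1}]}\|f(r,Y_r)-f(r,X^{Y_{\tau_i}}_{\tau_i,r})\|_V\to0$ and $\sup_i\sup_{r\in[\tau_i,\tau_{i+1}]}\|A_{\tau_i,\tau_{i+1}}-(\tfrac{\partial}{\partial x}X^{Y_r}_{r,t})\|_{L(V)}\to0$, while $\sup_i\|A_{\tau_i,\tau_{i+1}}\|_{L(V)}$ stays bounded and $\int_s^t\|E_r\|_V\,dr<\infty$; hence the right-hand side of the displayed estimate tends to $0$, and since its left-hand side does not depend on the partition it must equal $0$, which is item~(iv). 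The main obstacle is exactly this limiting step: the perturbation $E$, hence the time-derivative of $Y$, is only in $L^1$, so $\tau\mapsto X^{Y_\tau}_{\tau,t}$ cannot be differentiated in $\tau$ naively and the classical argument of differentiating the flow along $Y$ must be replaced by the Riemann-sum argument above, in which the $L^1$-mass of $E$ is transported passively through uniformly bounded operators and every other error is made uniform over compact sets. A secondary technical point is property~(c) — propagating the assumed continuity of $\tfrac{\partial}{\partial x}f$ to continuity of $(s,x,t)\mapsto(\tfrac{\partial}{\partial x}X^x_{s,t})$ in all arguments simultaneously.
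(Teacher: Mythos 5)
Your proposal is correct, but for the central identity (item~(iv)) it takes a genuinely different route from the paper. The paper's engine is an extended chain rule for $\mathcal{C}^1$ maps composed with absolutely continuous paths (Lemma~\ref{lemma:integral_chain_rule} and Corollary~\ref{corollary:integral_chain_rule}, proved by approximating the $L^1$ derivative by continuous functions): it differentiates $s\mapsto \Phi_{s,t}(Y_s)$ directly, and then converts the initial-time derivative of the flow into the initial-value derivative via the identity $\tfrac{\partial}{\partial s}X^x_{s,t}=-(\tfrac{\partial}{\partial x}X^x_{s,t})f(s,x)$ (Proposition~\ref{proposition:Banach_space_alekseev_grobner} combined with Lemma~\ref{lemma:Differentiable}). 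This forces the paper to establish joint $\mathcal{C}^1$ regularity of $(s,x)\mapsto X^x_{s,t}$, including differentiability in the initial time, which occupies most of Section~\ref{subsection:ContinuousDifferentiability}. Your telescoping/mean-value argument sidesteps both of these: you never differentiate in $s$ at all, and the only regularity you consume is the flow property, Fr\'echet differentiability in $x$, and joint \emph{continuity} of $(s,t,x)\mapsto\tfrac{\partial}{\partial x}X^x_{s,t}$ (the paper's item~(\ref{item:0}) of Corollary~\ref{corollary:Alekseev_grobner}); the $L^1$ nature of $E$ is absorbed through uniform boundedness of the averaged derivatives $A_{\tau,\sigma}$ together with absolute continuity of $\int\|E\|$, and your limiting estimates all check out. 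What the paper's route buys is modularity (Proposition~\ref{proposition:Banach_space_alekseev_grobner} is a reusable abstract statement) at the price of more regularity theory; your route buys a shorter path to~(iv) under weaker hypotheses on the flow. Two caveats: your step~(c) — joint continuity of the derivative in all three variables — is dispatched in one clause but is where the bulk of the paper's technical work lives (the paper proves it via an inverse-function-type argument, Lemmas~\ref{lemma:converse_chain_rule} and~\ref{lemma:diff_initial_value}, rather than a bare Gronwall estimate), so it should not be presented as routine; and your uniform-continuity appeals in the limiting step should explicitly fix the compact sets $Y([0,T])$, $\{X^{Y_a}_{a,b}\colon 0\le a\le b\le T\}$, and the segments joining them before the partition is chosen, which your sketch indicates but does not fully write out.
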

The rest of this article is structured as follows.
In Section~\ref{subsection:ExtendedChainRule} 
we recall some elementary and well known properties for Banach space valued functions
(see
Lemmas~\ref{lemma:stronglyMeasurable}--\ref{lemma:integral_chain_rule},  Corollary~\ref{corollary:integral_chain_rule},
and
Lemma~\ref{lemma:ChangeOrder}).
Thereafter we 
combine these
elementary results
to prove
an abstract version of the Alekseev-Gr\"obner formula 
for Banach space valued ODEs
under, roughly speaking, 
restrictive conditions
on the solution as 
well as on the perturbation of the considered ODE;
see 
Proposition~\ref{proposition:Banach_space_alekseev_grobner}
in Section~\ref{subsection:ExtendedChainRule}
below for details.
Sections~\ref{subsection:Continuity of solutions} and~\ref{subsection:ContinuousDifferentiability}
are devoted to presenting in detail some
partially well known results
on continuous differentiability  
of solutions
to a class of Banach space valued ODEs 
with respect to initial value, initial time, and current time
(see Lemma~\ref{lemma:Differentiable} 
in 
Section~\ref{subsection:ContinuousDifferentiability}
below).
Finally, 
we
combine
Proposition~\ref{proposition:Banach_space_alekseev_grobner}, 
Lemma~\ref{lemma:flow_property}
(the flow property of solutions to ODEs),
and 
Lemma~\ref{lemma:Differentiable}
to establish
in Corollary~\ref{corollary:Alekseev_grobner} 
in Section~\ref{subsection:AG}
below
the main result of this article. 
\section{Extended chain rule property
for Banach space valued functions} 
\label{subsection:ExtendedChainRule}
In this section we
prove an abstract version of
the Alekseev-Gr\"obner  formula for  
Banach space valued
ODEs
under, loosely speaking,
restrictive conditions
 in Proposition~\ref{proposition:Banach_space_alekseev_grobner}.
This will be used
in Section~\ref{subsection:AG}
to prove 
in Corollary~\ref{corollary:Alekseev_grobner} an extension of the Alekseev-Gr\"obner formula (cf., e.g.,
Hairer et al.\ \cite[Theorem~14.5 in Chapter~I]{HairerNonstiffProblems}) for Banach space valued functions.
In order to prove 
Proposition~\ref{proposition:Banach_space_alekseev_grobner}
we first recall
some
elementary
auxiliary lemmas;
see
Lemmas~\ref{lemma:stronglyMeasurable}--\ref{lemma:integral_chain_rule},  Corollary~\ref{corollary:integral_chain_rule},
and
Lemma~\ref{lemma:ChangeOrder}. 
In particular, we recall the fundamental theorem of calculus for Banach space valued functions 
in Lemmas~\ref{lemma:Lebesgues_diff}--\ref{lemma:FundamentaTheoremOfCalculus}
(cf., e.g., Pr\'ev\^ot \& R\"ockner~\cite[Proposition A.2.3]{PrevotRoeckner2007})
and we derive 
suitable extensions thereof in 
Lemma~\ref{lemma:integral_chain_rule}
(cf., e.g., \cite[Lemma~2.1]{JentzenDiyoraWelti2017})
and
Corollary~\ref{corollary:integral_chain_rule}.
Thereafter, we combine Lemma~\ref{lemma:integral_chain_rule},
Corollary~\ref{corollary:integral_chain_rule}, 
and
Lemma~\ref{lemma:ChangeOrder}
(cf., e.g., Rudin~\cite[Theorem~7.17 and the remark thereafter]{Rudin76}) 
to establish
Proposition~\ref{proposition:Banach_space_alekseev_grobner}.
\begin{lemma}
	\label{lemma:stronglyMeasurable}
	Let $ ( X, \mathcal{X}) $ be a separable topological space,
	let
	$ ( Y, \mathcal{Y} ) $ be a topological space,
	and let
	$ f \in \mathcal{C}( X, Y ) $.
	Then $ f $ is strongly measurable.
\end{lemma}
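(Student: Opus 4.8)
The plan is to reduce strong measurability to the two standard ingredients: Borel/weak measurability and essential separability of the range. First I would fix the relevant measurable structure: equip $X$ with its Borel $\sigma$-algebra $\mathcal B(X)$ generated by $\mathcal X$ and $Y$ with $\mathcal B(Y)$ generated by $\mathcal Y$. Since $f\in\mathcal C(X,Y)$, preimages of open sets are open, hence Borel, so $f$ is $\mathcal B(X)/\mathcal B(Y)$-measurable. Next I would invoke separability of $(X,\mathcal X)$: pick a countable dense set $D\subseteq X$. By continuity, $f(D)$ is dense in the closure $\overline{f(X)}\subseteq Y$, so the range of $f$ is contained in a separable (closed) subspace of $Y$. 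The combination ``Borel measurable with essentially separable range'' is, by the Pettis measurability theorem (or, when $Y$ is a Banach or metric space, simply the definition of strong measurability as an a.e.\ limit of countably-valued measurable functions), exactly strong measurability.

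The one subtlety I would be careful about is what ``strongly measurable'' is taken to mean here, since the lemma is stated for a general topological target $Y$ rather than a Banach space. In this paper the functions to which the lemma is applied are Banach-space valued, so I would read ``strongly measurable'' in the usual Bochner sense: $f$ is the pointwise limit of a sequence of finitely-valued (simple) Borel measurable functions. To produce such a sequence concretely I would use the separable range: enumerate a countable dense subset $\{y_n\}_{n\in\N}$ of $\overline{f(X)}$, and for each $m\in\N$ define $f_m(x)$ to be $y_{n}$ where $n\le m$ is the least index minimizing $\|f(x)-y_n\|_Y$ (using the metric, resp.\ a metric inducing $\mathcal Y$ on the separable range). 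Each $f_m$ takes finitely many values, is Borel measurable because the sets on which it equals a given $y_n$ are Borel (they are built from the Borel sets $f^{-1}(\{y:\|y-y_j\|<r\})$), and $f_m(x)\to f(x)$ for every $x$ by density. Hence $f$ is strongly measurable.

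The step I expect to be the ``main obstacle'' is purely bookkeeping rather than mathematics: making sure the notion of strong measurability, and the (pseudo)metric used on the range, is the one the later sections actually need, and confirming that separability of the topological space $X$ indeed yields density of $f(D)$ in $f(X)$ — this is immediate from continuity but should be stated. No estimates or limiting arguments of any depth are involved; the content is the classical fact that a continuous map from a separable space has separable range and is therefore strongly measurable.
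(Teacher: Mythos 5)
Your proposal is correct and follows essentially the same route as the paper: continuity gives (Borel) measurability, and the image of a countable dense subset of $X$ under $f$ is dense in $f(X)$, so the range is separable, which together yield strong measurability. The only difference is that you additionally spell out the Pettis-type simple-function approximation, a detail the paper leaves implicit in its final sentence.
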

\begin{proof}[Proof of Lemma~\ref{lemma:stronglyMeasurable}]
Throughout this proof let
$ A \subseteq X $ be a countable dense subset of $ X $.
Note that
the assumption that $ f $ is continuous ensures that for all
$ V \in \mathcal{Y} $ with
$ f(X) \cap V \neq \emptyset $
it holds that
\begin{equation} 
\emptyset \neq \{ x \in X \colon f(x) \in V \} \in \mathcal{X} 
.
\end{equation} 
This and the fact that
$ A \subseteq X $
is dense imply that
for every
$ V \in \mathcal{Y} $
with
$ f(X) \cap V \neq \emptyset $
there exists
$ a \in A $
such that
$ f(a) \in V $.
The fact that
$ A \subseteq X $ is countable therefore
implies that
$ f(A) \subseteq f(X) $ is a countable dense subset of $ f( X ) $.
This and the fact that
$ f $ is measurable complete the proof of
Lemma~\ref{lemma:stronglyMeasurable}.
\end{proof}
\begin{lemma}
	\label{lemma:Lebesgues_diff}
	Let $ ( V, \left \| \cdot
	\right\|_V ) $
	be
	an $ \R $-Banach space,
	let $ a \in \R $,
	$ b \in (a,\infty) $,
	and let
	$ f \in \mathcal{C}([a,b], V) $.
	Then 
	it holds
	for every $ t \in [a,b] $ that
	$ \limsup_{ ( [a-t, b-t] \backslash \{0\} ) \ni h \to 0}
	\| \frac{1}{h}
	\int_t^{ t + h } f(s) \, ds - f(t) \|_V 
	=
	0 
	$.
\end{lemma}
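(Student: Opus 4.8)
The plan is to prove the Lebesgue differentiation statement for the continuous Banach-space-valued function $f$ by reducing it to the scalar fundamental theorem of calculus applied to the continuous map $s \mapsto \| f(s) - f(t) \|_V$. First I would fix $t \in [a,b]$ and, for $h \in [a-t,b-t] \setminus \{0\}$, use the estimate
\begin{equation}
\left\| \tfrac{1}{h} \int_t^{t+h} f(s) \, ds - f(t) \right\|_V
= \left\| \tfrac{1}{h} \int_t^{t+h} \bigl( f(s) - f(t) \bigr) \, ds \right\|_V
\le \tfrac{1}{|h|} \left| \int_t^{t+h} \| f(s) - f(t) \|_V \, ds \right|,
\end{equation}
which follows from the linearity of the Bochner integral (so that $f(t) = \tfrac{1}{h}\int_t^{t+h} f(t)\,ds$) together with the standard norm bound for Bochner integrals.

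Next I would bound the right-hand side by the modulus of continuity of $f$ at $t$. Since $f \in \mathcal{C}([a,b],V)$, for every $\eps \in (0,\infty)$ there exists $\delta \in (0,\infty)$ such that for all $s \in [a,b]$ with $|s - t| \le \delta$ it holds that $\| f(s) - f(t) \|_V \le \eps$. Hence for all $h \in [a-t,b-t] \setminus \{0\}$ with $|h| \le \delta$ one obtains
\begin{equation}
\tfrac{1}{|h|} \left| \int_t^{t+h} \| f(s) - f(t) \|_V \, ds \right|
\le \tfrac{1}{|h|} \cdot |h| \cdot \eps = \eps,
\end{equation}
since every $s$ between $t$ and $t+h$ satisfies $|s-t| \le |h| \le \delta$. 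Combining the two displays shows that $\limsup_{([a-t,b-t]\setminus\{0\}) \ni h \to 0} \| \tfrac{1}{h} \int_t^{t+h} f(s)\,ds - f(t) \|_V \le \eps$, and letting $\eps \downarrow 0$ yields the claim.

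There is essentially no serious obstacle here; the only points requiring a little care are purely measure-theoretic bookkeeping. One should note that $s \mapsto f(s)$ is strongly measurable on $[a,b]$ (by Lemma~\ref{lemma:stronglyMeasurable}, using that $[a,b]$ is a separable topological space and $f$ is continuous) and bounded (being continuous on a compact interval), so the Bochner integrals $\int_t^{t+h} f(s)\,ds$ are well defined; likewise $s \mapsto \|f(s)-f(t)\|_V$ is continuous, hence Borel measurable and bounded, so its Lebesgue integral is well defined and finite. The absolute value around the integral in the displays accounts for the case $h < 0$, where $\int_t^{t+h} = -\int_{t+h}^t$; writing $\bigl|\int_t^{t+h}(\cdot)\,ds\bigr| \le |h| \sup_{|s-t|\le|h|} \|f(s)-f(t)\|_V$ uniformly in the sign of $h$ handles both cases at once. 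This completes the plan.
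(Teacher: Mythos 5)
Your proof is correct, but it takes a genuinely different (and more elementary) route than the paper. You exploit the continuity of $f$ at $t$ directly: after the standard reduction
$\| \tfrac{1}{h}\int_t^{t+h} f(s)\,ds - f(t) \|_V \le \sup_{s \colon |s-t|\le |h|} \| f(s)-f(t)\|_V$,
the claim follows immediately from the definition of continuity. The paper instead mimics the classical proof of the Lebesgue differentiation theorem: it picks a countable dense subset $(c_n)_{n\in\N}$ of $\operatorname{im}(f)$, applies the scalar fundamental theorem of calculus to each continuous real-valued map $s \mapsto \|f(s)-c_n\|_V$, deduces
$\limsup_{h\to 0} \tfrac{1}{|h|}\int_{\min\{t,t+h\}}^{\max\{t,t+h\}} \|f(s)-f(t)\|_V\,ds \le 2\|f(t)-c_n\|_V$,
and then lets $c_n \to f(t)$ using density. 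That argument is the one that survives when $f$ is merely Bochner integrable rather than continuous (it shows almost every point is a Lebesgue point), but under the continuity hypothesis actually assumed in the lemma it is unnecessarily heavy; your direct estimate is shorter and reaches the same conclusion at every point $t$, not just almost every point. Your measure-theoretic remarks (strong measurability via Lemma~\ref{lemma:stronglyMeasurable}, boundedness on the compact interval, and the absolute value handling the case $h<0$) are exactly the right bookkeeping.
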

\begin{proof}[Proof of Lemma~\ref{lemma:Lebesgues_diff}]
	Throughout this proof
	let
	$ (c_n)_{n\in \N} \subseteq 
	\operatorname{im}(f) $
	be a dense subset of $ \operatorname{im}(f) $. 
	Note that the fundamental theorem of calculus
	assures that 
	for all
	$ t \in [a,b] $,
	$ n \in \N $
	it holds that
	\begin{equation}
	\begin{split}
	\limsup_{ ( [a-t, b-t] \backslash \{0\} ) \ni h \to 0}
	\Big|
	\tfrac{1}{ h }
	\int_t^{ t+h }
	\| f(s) - c_n \|_V \, ds
	-
	\| f(t) - c_n \|_V
	\Big|=
	0 
	.
	\end{split}
	\end{equation}
	This implies
	for all
	$ t \in  [a,b] $,
	$ n \in \N $
	that
	\begin{equation}
	\begin{split}
	&
	\limsup_{ ( [a-t, b-t] \backslash \{0\} ) \ni h \to 0}
	\tfrac{1}{ | h | }
	\int_{ \min\{t, t + h\} }^{ \max \{t,t + h\} }
	\| f(s) - f(t) \|_V \, ds
	\\
	&
	\leq
	\limsup_{ ( [a-t, b-t] \backslash \{0\} ) \ni h \to 0}
	\tfrac{1}{ | h | }
	\int_{ \min\{t, t + h\} }^{ \max \{t,t + h\} }
	\big(
	\| f(s) - c_n \|_V
	+
	\| c_n - f(t) \|_V
	\big)
	\, 
	ds
	\\
	&
	=
	\limsup_{ ( [a-t, b-t] \backslash \{0\} ) \ni h \to 0}
	\tfrac{1}{ | h | }
	\int_{ \min\{t, t + h\} }^{ \max \{t,t + h\} }
	\| f(s) - c_n \|_V \, ds
	\\
	&
	\quad
	+
	\limsup_{ ( [a-t, b-t] \backslash \{0\} ) \ni h \to 0}
	\tfrac{1}{ | h | }
	\int_{ \min\{t, t + h\} }^{ \max \{t,t + h\} }
	\| c_n - f(t) \|_V \, ds
	\\
	&
	=
	\| f(t) - c_n \|_V
	+
	\limsup_{ ( [a-t, b-t] \backslash \{0\} ) \ni h \to 0}
	\tfrac{ 
		| h |
	} { | h | }
	\| c_n - f(t) \|_V
	=
	2
	\| f (t) - c_n \|_V.
	\end{split}
	\end{equation}
	The fact that
	$ ( c_n )_{n \in \N} \subseteq 
	\operatorname{im}(f) $
	is dense
	hence
	ensures
	for all
	$ t \in [a,b] $
	that
	\begin{equation}
	\limsup_{ ( [a-t, b-t] \backslash \{0\} ) \ni h \to 0}
	\tfrac{1}{ h }
	\int_t^{ t + h }
	\| f(s) - f(t) \|_V \, ds
	=
	0.
	\end{equation}
	Therefore, we obtain for every $ t \in [a,b] $ that
	\begin{equation}
	\begin{split}
	\label{eq:left_limits}
	&
	\limsup_{ ( [a-t, b-t] \backslash \{0\} ) \ni h \to 0}
	\Big\|
	\tfrac{1}{ h }
	\int_t^{ t + h }
	f(s)  \, ds  
	- f(t)
	\Big\|_V
	\\
	&
	\leq
	\limsup_{ ( [a-t, b-t] \backslash \{0\} ) \ni h \to 0}
	\tfrac{1}{ h }
	\int_t^{ t+h }
	\|
	f(s)   
	- 
	f(t)
	\|_V
	\, ds 
	=
	0.
	\end{split}
	\end{equation}
	This
	completes the proof of 
	Lemma~\ref{lemma:Lebesgues_diff}.
\end{proof}
\begin{lemma}
	\label{lemma:extensions}
	Let $ ( V, \left\| \cdot \right \|_V ) $
	be an $ \R $-Banach space
	and
	let 
	$ a \in \R $,
	$ b \in (a,\infty) $,
	$ f \in \mathcal{C}([a,b], V) $,
	$ F \colon [a,b] \to V $
	satisfy for all
	$ t \in [a,b] $ that
	$ F(t) 
	= F(a) + \int_a^t f(s) \,ds $.
	Then it holds for every
	$ t \in [a,b] $ that 
	$ \limsup_{ ( [a-t, b-t] \backslash \{0\} ) \ni h \to 0}
	\| \frac{1}{h} ( F( t + h ) - F(t) - f(t) h ) \|_V 
	= 0 $.
\end{lemma}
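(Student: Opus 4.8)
The plan is to reduce Lemma~\ref{lemma:extensions} to Lemma~\ref{lemma:Lebesgues_diff}. The key observation is that for $t \in [a,b]$ and $h \in [a-t,b-t]\setminus\{0\}$ with $h > 0$ we have, by the defining property of $F$ and linearity of the integral,
\begin{equation}
F(t+h) - F(t) = \int_a^{t+h} f(s)\,ds - \int_a^t f(s)\,ds = \int_t^{t+h} f(s)\,ds,
\end{equation}
and hence
\begin{equation}
\tfrac{1}{h}\big(F(t+h) - F(t) - f(t)h\big) = \tfrac{1}{h}\int_t^{t+h} f(s)\,ds - f(t).
\end{equation}
For $h < 0$ the same algebraic identity holds after one checks that $\int_a^{t+h} f(s)\,ds - \int_a^t f(s)\,ds = \int_t^{t+h} f(s)\,ds$ with the usual convention $\int_t^{t+h} = -\int_{t+h}^t$, so the identity $\tfrac{1}{h}(F(t+h)-F(t)-f(t)h) = \tfrac{1}{h}\int_t^{t+h} f(s)\,ds - f(t)$ is valid for all admissible $h \neq 0$ of either sign.

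Given this identity, I would simply write
\begin{equation}
\limsup_{([a-t,b-t]\setminus\{0\})\ni h \to 0}\Big\|\tfrac{1}{h}\big(F(t+h)-F(t)-f(t)h\big)\Big\|_V = \limsup_{([a-t,b-t]\setminus\{0\})\ni h \to 0}\Big\|\tfrac{1}{h}\int_t^{t+h} f(s)\,ds - f(t)\Big\|_V = 0,
\end{equation}
where the last equality is exactly the conclusion of Lemma~\ref{lemma:Lebesgues_diff} applied to $f \in \mathcal{C}([a,b],V)$ and $t \in [a,b]$. This completes the proof.

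There is essentially no obstacle here: the lemma is a routine restatement of the fundamental-theorem-of-calculus fact from Lemma~\ref{lemma:Lebesgues_diff} in terms of the antiderivative $F$ rather than the raw integral. The only point requiring the slightest care is the sign bookkeeping in the integral identity for negative increments $h$, i.e.\ making sure the difference of the two integrals $\int_a^{t+h} - \int_a^t$ equals $\int_t^{t+h}$ under the signed-integral convention; once that is noted, the statement follows immediately and no estimates beyond those already carried out in Lemma~\ref{lemma:Lebesgues_diff} are needed.
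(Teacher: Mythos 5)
Your proposal is correct and follows exactly the paper's own argument: rewrite $\frac{1}{h}(F(t+h)-F(t))$ as $\frac{1}{h}\int_t^{t+h} f(s)\,ds$ using the defining integral identity for $F$ (valid for both signs of $h$ under the signed-integral convention) and then invoke Lemma~\ref{lemma:Lebesgues_diff}. No further comment is needed.
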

\begin{proof}[Proof of Lemma~\ref{lemma:extensions}]
	Note that for all 
	$ t \in [a, b] $,
	$ h \in [a-t,b-t] \backslash \{0\} $
	it holds that
	\begin{equation}
	\begin{split}
	\tfrac{ F( t+h ) - F(t)}{ h }
	=
	\tfrac{1}{ h }
	\int_t^{t+h }
	f(s) \, ds.
	\end{split}
	\end{equation}
	Combining this with
	Lemma~\ref{lemma:Lebesgues_diff}
	(with 
	$ V = V $,
	$ a = a $,
	$ b = b $,
	$ f = f $
	in the notation of
	Lemma~\ref{lemma:Lebesgues_diff})
	implies  for all
	$ t \in [a,b] $
	that
	\begin{equation}
	\begin{split}
	&
	\limsup_{ ( [a-t, b-t] \backslash \{0\} ) \ni h \to 0}
	\big\| 
	\tfrac{ F( t+h ) - F(t)}{ h }
	- f(t)
	\big\|_V
	=
	\limsup_{ ( [a-t, b-t] \backslash \{0\} ) \ni h \to 0}
	\Big\|
	\tfrac{1}{ h }
	\int_t^{ t+h }
	f(s) \, ds - f(t)
	\Big\|_V
	=
	0 
	.
	\end{split}
	\end{equation}
	The proof of Lemma~\ref{lemma:extensions}
	is thus completed.
\end{proof}
\begin{lemma} 
	\label{lemma:FundamentaTheoremOfCalculus}
	Let $ ( V, \left \| \cdot \right \|_V ) $
	be an $ \R $-Banach space 
	and let 
	$ a \in \R $,
	$ b \in (a, \infty) $, 
	$ F \in \mathcal{C}^1 ( [a,b], V ) $.
	Then it holds for all
	$ t \in [a,b] $ 
	that
	\begin{equation}
	F(t) - F(a) 
	=
	\int_a^t F'(s) \, ds
	.
	\end{equation}
\end{lemma}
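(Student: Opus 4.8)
The plan is to reduce the claim to the scalar fundamental theorem of calculus by combining the already established Lemma~\ref{lemma:extensions} with a Hahn--Banach separation argument. First I would introduce the auxiliary function $G \colon [a,b] \to V$ defined by $G(t) = \int_a^t F'(s)\,ds$, and note that $F' \in \mathcal{C}([a,b], V)$ (since $F \in \mathcal{C}^1([a,b], V)$) and that $G(a) = 0$ and $G(t) = G(a) + \int_a^t F'(s)\,ds$ for all $t \in [a,b]$. Applying Lemma~\ref{lemma:extensions} with $f = F'$ and $F = G$ then shows that $G$ is differentiable on $[a,b]$ with $G'(t) = F'(t)$ for every $t \in [a,b]$.

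Next I would set $H := F - G$, so that $H \in \mathcal{C}^1([a,b], V)$ and $H'(t) = F'(t) - G'(t) = 0$ for all $t \in [a,b]$. The key remaining step is to show that $H$ is constant. To this end I would fix an arbitrary $\varphi$ in the topological dual $V'$ of $V$ and observe that $\varphi \circ H \colon [a,b] \to \R$ is differentiable with $(\varphi \circ H)'(t) = \varphi(H'(t)) = 0$ for all $t \in [a,b]$; hence the classical (scalar) mean value theorem forces $\varphi \circ H$ to be constant, so $\varphi(H(t) - H(a)) = 0$ for every $t \in [a,b]$. Since the Hahn--Banach theorem guarantees that $V'$ separates the points of $V$, it follows that $H(t) = H(a)$ for all $t \in [a,b]$, i.e., $F(t) - G(t) = F(a) - G(a) = F(a)$. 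Rearranging yields $F(t) - F(a) = G(t) = \int_a^t F'(s)\,ds$, which is the asserted identity.

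The only genuinely nontrivial point is the passage from ``$H$ is differentiable with identically vanishing derivative'' to ``$H$ is constant'': in a general $\R$-Banach space this is not completely immediate and needs either the Hahn--Banach reduction sketched above or, alternatively, a direct mean value inequality argument bounding $\|H(t) - H(a)\|_V$ by an $\varepsilon$-argument along the connected interval $[a,b]$. Everything else --- the continuity of $F'$, the elementary properties of $G$, and the linearity of differentiation under the subtraction $H = F - G$ --- is routine and follows directly from the material already recorded in Lemmas~\ref{lemma:Lebesgues_diff}--\ref{lemma:extensions}.
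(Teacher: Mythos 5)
Your proposal is correct and follows essentially the same route as the paper: both apply Lemma~\ref{lemma:extensions} to the integral function $G$ to obtain $G'=F'$, and both then reduce to the scalar case by composing $F-G$ with continuous linear functionals and invoking the fact that $L(V,\R)$ separates points (Hahn--Banach). The only cosmetic difference is your normalization $G(a)=0$ versus the paper's $G(a)=F(a)$, which does not affect the argument.
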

\begin{proof}[Proof of Lemma~\ref{lemma:FundamentaTheoremOfCalculus}]
	Throughout this proof let 
	$ G \colon [a,b] \to V $
	be the function which satisfies
	  for all
	$ t \in [a,b] $ that 
	$ G(t) - F(a) = \int_a^t F'(s) \, ds $
	and for every
	$ L \in L(V, \R) $ let
	$ p_L \colon [a,b]\to \R $
	be the function which satisfies for all
	$ t \in (a,b) $ that
	$ p_L(t)= L( F(t) - G(t) ) $.
	Observe that
	Lemma~\ref{lemma:extensions}
	(with
	$ V = V $,
	$ a = a $,
	$ b = b $,
	$ f = F' $,
	$ F = G $
	in the notation of Lemma~\ref{lemma:extensions})
	shows that
	for all
	$ t \in [a,b] $ it holds that
	\begin{equation} 
	G'(t) = F'(t)  
	.
	\end{equation}
	This and the fact that
	$ F' \in \mathcal{C}( [a, b], V ) $
	ensure that
	for all 
	$ L \in L(V, \R ) $,
	$ t \in (a,b) $
	it holds that
	$ p_L \in \mathcal{C}^1( [a,b], \R ) $
	and
	$ p_L'(t) = 0 $.
This proves that
for every $ L \in L(V, \R) $
	the function $ p_L $ is constant.
	The fact that the space
	$ L(V,\R) $ separates points 
(see, e.g., Brezis~\cite[Corollary~1.3 in Chaper~1]{Brezis2011})
	hence shows that
	$ [a,b] \ni t \mapsto F(t) - G(t) $ is a constant function.
	This demonstrates that for all
	$ t \in [a,b] $ it holds that
	\begin{equation}
	F(t) - F(a) = \int_a^t F'(s) \, ds.
	\end{equation}
	The proof of Lemma~\ref{lemma:FundamentaTheoremOfCalculus}
	is thus completed.
\end{proof}
\begin{lemma}
	\label{lemma:Help_cont}
	Let $ ( V, d_V ) $
	and
	$ ( W, d_W ) $
	be metric spaces
	and let 
	$ a \in \R $,
	$ b \in (a, \infty) $, 
	$ g \in \mathcal{C}( [a,b] \times V, W ) $.
	Then it holds that
	the function
	\begin{equation} 
	V \ni v \mapsto 
	( [a,b] \ni t \mapsto g(t,v) \in W ) 
	\in 
	\mathcal{C} ( [a,b], W ) 
	\end{equation}
	is continuous.
\end{lemma}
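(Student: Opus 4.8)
The plan is to prove the continuity of the map $V \ni v \mapsto (t \mapsto g(t,v)) \in \mathcal{C}([a,b],W)$ directly from the definition of continuity in a metric space, exploiting the compactness of $[a,b]$. Recall that the metric on $\mathcal{C}([a,b],W)$ is the supremum metric, so the claim to be established is that for every $v_0 \in V$ and every $\eps > 0$ there exists $\delta > 0$ such that for all $v \in V$ with $d_V(v,v_0) < \delta$ it holds that $\sup_{t \in [a,b]} d_W(g(t,v), g(t,v_0)) \leq \eps$.

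First I would fix $v_0 \in V$ and $\eps \in (0,\infty)$. The idea is then the following: since $g$ is continuous on the product $[a,b] \times V$, for each $t \in [a,b]$ there is a neighbourhood of $(t,v_0)$ on which $g$ deviates from $g(t,v_0)$ by less than $\eps/2$; concretely, by joint continuity of $g$ at each point $(t, v_0)$ together with continuity of $g$ in the first variable, for every $t \in [a,b]$ one obtains $r_t \in (0,\infty)$ and $\rho_t \in (0,\infty)$ such that $d_W(g(s,v), g(t,v_0)) < \eps/2$ whenever $|s - t| < r_t$ and $d_V(v,v_0) < \rho_t$. The collection $\{ (t - r_t, t + r_t) \cap [a,b] : t \in [a,b]\}$ is then an open cover of the compact set $[a,b]$, so there exist finitely many points $t_1, \dots, t_N \in [a,b]$ with $[a,b] \subseteq \bigcup_{k=1}^N (t_k - r_{t_k}, t_k + r_{t_k})$. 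Setting $\delta = \min\{ \rho_{t_1}, \dots, \rho_{t_N} \} > 0$, I would then verify that this $\delta$ works: for any $v \in V$ with $d_V(v,v_0) < \delta$ and any $t \in [a,b]$, pick $k$ with $|t - t_k| < r_{t_k}$ and estimate, via the triangle inequality in $W$, $d_W(g(t,v), g(t,v_0)) \leq d_W(g(t,v), g(t_k, v_0)) + d_W(g(t_k,v_0), g(t,v_0)) < \eps/2 + \eps/2 = \eps$, where the first term is bounded using $|t - t_k| < r_{t_k}$ and $d_V(v,v_0) < \delta \leq \rho_{t_k}$, and the second using $|t - t_k| < r_{t_k}$ with $v = v_0$. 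Taking the supremum over $t \in [a,b]$ then gives $\sup_{t \in [a,b]} d_W(g(t,v), g(t,v_0)) \leq \eps$, which is the desired estimate.

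The only mildly delicate point — and the one I would state carefully — is the initial extraction of the radii $r_t$ and $\rho_t$ from joint continuity: one should note that continuity of $g$ at $(t,v_0) \in [a,b] \times V$ with respect to, say, the maximum metric on the product (or any equivalent product metric) yields a single radius controlling both coordinates simultaneously, and in particular furnishes the two radii $r_t$ and $\rho_t$ above (one may simply take $r_t = \rho_t$ equal to that single radius). There is no real obstacle here; the compactness argument is entirely standard and the proof is short. One should, however, make sure the finite subcover and hence $\delta$ are chosen \emph{after} fixing $\eps$ and $v_0$ but \emph{before} quantifying over $v$ and $t$, so that $\delta$ genuinely depends only on $\eps$ and $v_0$.
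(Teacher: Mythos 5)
Your proposal is correct and follows essentially the same route as the paper's proof: joint continuity of $g$ at the points $(t,v_0)$ to extract a modulus, compactness of $[a,b]$ for a finite subcover, and the triangle inequality through the anchor points $g(t_k,v_0)$. The only cosmetic difference is that you keep separate radii $r_t,\rho_t$ where the paper uses a single $\delta_{t,x_0}$ controlling both coordinates.
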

\begin{proof}[Proof of Lemma~\ref{lemma:Help_cont}]
	Throughout this proof let 
	$ \varepsilon \in (0,\infty) $.
	Observe that the assumption that
	$ g \in \mathcal{C}( [a,b] \times V, W ) $
	implies that
	there exists
	a function
	$ \delta \colon [ a,b] \times V \to (0,\infty) $ 
	such that for all
	$ x, x_0 \in V $,
	$ s, t \in [a,b] $ with $ \max \{ |s-t|, d_V(x, x_0) \}<\delta_{t,x_0} $ 
	it holds that
	\begin{equation}
	\label{eq:cont_condition}
	d_W( g(s,x), g(t, x_0) ) < \tfrac{ \varepsilon }{2} .
	\end{equation}
	Moreover, note that the fact that
	$ [a,b] $ is compact ensures that
	for every 
	$ x_0 \in V $
	there exist
	$ n \in \N $,
	$ t_1, \ldots, t_n \in [a,b] $
	such that
	$ a = t_1 < \ldots < t_n = b $
	and
	\begin{equation}
	[ a, b ] = \cup_{ i = 1 }^n  
	\{ r \in [a,b] \colon | r - t_i | < \delta_{t_i, x_0} \}
	.
	\end{equation}
	This and~\eqref{eq:cont_condition} demonstrate that
	for all
	$ x_0 \in V $
	there exist
	$ n \in \N $,
	$ t_1, \ldots, t_n \in [a,b] $
	such that for all
	$ x \in V $,
	$ t \in [a,b] $ 
	with
	$ d_V( x, x_0 ) < \min \{ \delta_{t_1, x_0},  \ldots, \delta_{t_n, x_0} \} $
	it holds that
	\begin{equation}
	\begin{split}
	d_W( g (t, x), g(t, x_0) )
	&
	\leq
	\min_{i\in [0,n]\cap \N} 
	\big| 
	d_W( g(t, x), g(t_i, x_0) )
	+
	d_W( g(t_i, x_0), g(t, x_0) )
	\big|
	%
	<
	\tfrac{\varepsilon}{2}
	+
	\tfrac{\varepsilon}{2}
	=
	\varepsilon
	.
	\end{split}
	\end{equation}
	As $ \varepsilon \in (0, \infty) $
	was arbitrary, the proof of Lemma~\ref{lemma:Help_cont}
	is completed.
\end{proof}
\begin{lemma}
	\label{lemma:integral_chain_rule} 
 	Let $ ( V, \left\| \cdot \right \|_V ) $
 	be a nontrivial $ \R $-Banach space,
 	let 
	$ ( W, \left\| \cdot \right \|_W ) $
	be an $ \R $-Banach space,
	 let $ a \in \R $,
	$ b \in (a,\infty) $, 
	$ \phi \in \mathcal{C}^1(V, W) $, 
	$ F \colon [a,b] \to V $,
	and let
	$ f \colon [a,b] \to V $
	be a strongly measurable function
	which 
	satisfies for all
	$ t \in [a,b] $ that
	$ \int_a^b \| f(s) \|_V \, ds < \infty $
	and
	$ F(t) - F(a)
	= \int_a^t f(s) \,ds $.
	Then 
	\begin{enumerate}[(i)] 
	\item \label{item:L1defined} it holds that
	$
	 [a,b] \ni s \mapsto  \phi'(F(s)) f(s) \in W $
	is strongly measurable,
	\item \label{item:L1definedB}
	it holds that
	$ \int_a^b
	\| \phi'(F(s)) f(s) \|_W \, ds
	< \infty $, 
	and
	\item \label{item:ChainRule}
	it holds for all
	$ t \in [a, b] $ that
	$ \phi(F(t))
	-
	\phi(F(a))
	=
	\int_a^t \phi'(F(s)) f(s) \, ds $.
	\end{enumerate}
\end{lemma}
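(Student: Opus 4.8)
The plan is to obtain items~(i) and~(ii) by soft arguments and then to prove item~(iii) by reducing to the case of a \emph{continuous} integrand via an $L^1$-approximation of $f$. Throughout I will use that, since $f$ is strongly measurable with $\int_a^b\|f(s)\|_V\,ds<\infty$, the Bochner integrals $\int_a^t f(s)\,ds$ exist and $F$ is continuous (indeed $\|F(t)-F(t_0)\|_V\le|\int_{t_0}^t\|f(s)\|_V\,ds|\to 0$ as $t\to t_0$ by dominated convergence). For item~(i), as $\phi\in\mathcal{C}^1(V,W)$ the derivative $\phi'\colon V\to L(V,W)$ is continuous, so $[a,b]\ni s\mapsto\phi'(F(s))\in L(V,W)$ is continuous and hence, by Lemma~\ref{lemma:stronglyMeasurable}, strongly measurable; writing $f$ as an a.e.\ limit of simple functions $\sum_i v_i\1_{A_i}$ and noting that each $s\mapsto\1_{A_i}(s)\,\phi'(F(s))v_i$ is strongly measurable then shows that $s\mapsto\phi'(F(s))f(s)$ is strongly measurable. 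For item~(ii), $[a,b]$ is compact and $s\mapsto\phi'(F(s))$ is continuous, so $M:=\sup_{s\in[a,b]}\|\phi'(F(s))\|_{L(V,W)}<\infty$, whence $\|\phi'(F(s))f(s)\|_W\le M\|f(s)\|_V$ for all $s$ and $\int_a^b\|\phi'(F(s))f(s)\|_W\,ds\le M\int_a^b\|f(s)\|_V\,ds<\infty$.

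For item~(iii), choose $f_n\in\mathcal{C}([a,b],V)$, $n\in\N$, with $\int_a^b\|f_n(s)-f(s)\|_V\,ds\to 0$ (continuous functions being dense in $L^1([a,b],V)$), and put $F_n(t):=F(a)+\int_a^t f_n(s)\,ds$. By Lemma~\ref{lemma:extensions}, $F_n\in\mathcal{C}^1([a,b],V)$ with $F_n'=f_n$, hence, by the chain rule for Fr\'echet derivatives, $\phi\circ F_n\in\mathcal{C}^1([a,b],W)$ with $(\phi\circ F_n)'(s)=\phi'(F_n(s))f_n(s)$. Since $F_n(a)=F(a)$, Lemma~\ref{lemma:FundamentaTheoremOfCalculus} applied to $\phi\circ F_n$ gives
\begin{equation}
\phi(F_n(t))-\phi(F(a))=\int_a^t\phi'(F_n(s))f_n(s)\,ds
\qquad\text{for all }n\in\N,\ t\in[a,b].
\end{equation}
It remains to let $n\to\infty$ in this identity, and it is here that the only genuinely delicate point enters.

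From $\|F_n(t)-F(t)\|_V\le\int_a^b\|f_n(s)-f(s)\|_V\,ds$ one gets $F_n\to F$ uniformly on $[a,b]$, so $\phi(F_n(t))\to\phi(F(t))$ for each $t$ by continuity of $\phi$. For the right-hand side, the issue is that $\phi'$ need not be bounded on bounded subsets of $V$; this is circumvented by using the compactness of $K:=F([a,b])$: by continuity of $\phi'$ and compactness of $K$ there exist $\delta,\tilde M\in(0,\infty)$ with $\|\phi'(w)\|_{L(V,W)}\le\tilde M$ whenever $\operatorname{dist}(w,K)<\delta$, and for all large $n$ one has $\sup_{s\in[a,b]}\|F_n(s)-F(s)\|_V<\delta$, hence $\|\phi'(F_n(s))\|_{L(V,W)}\le\tilde M$ for all $s$. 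Using $\|\phi'(F_n(s))f_n(s)-\phi'(F(s))f(s)\|_W\le\tilde M\|f_n(s)-f(s)\|_V+\|\phi'(F_n(s))-\phi'(F(s))\|_{L(V,W)}\|f(s)\|_V$, the first term integrates to something tending to $0$, while the integrand of the second tends to $0$ pointwise (continuity of $\phi'$ and $F_n(s)\to F(s)$) and is dominated by $2\tilde M\|f(s)\|_V\in L^1([a,b],\R)$, so by dominated convergence $\int_a^b\|\phi'(F_n(s))f_n(s)-\phi'(F(s))f(s)\|_W\,ds\to 0$. Consequently $\int_a^t\phi'(F_n(s))f_n(s)\,ds\to\int_a^t\phi'(F(s))f(s)\,ds$ uniformly in $t$, and passing to the limit in the displayed identity yields item~(iii).

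I expect the main obstacle to be exactly this uniform control of $\phi'(F_n(\cdot))$: boundedness of the family $\{F_n\}$ alone does not suffice in an infinite-dimensional $V$, and one must combine the compactness of $F([a,b])$ with the uniform convergence $F_n\to F$ to obtain the bound $\tilde M$. A secondary, more routine point is the careful verification of strong measurability of $s\mapsto\phi'(F(s))f(s)$ in item~(i).
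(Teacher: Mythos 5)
Your proof is correct and follows essentially the same route as the paper: approximate $f$ in $L^1$ by continuous $f_n$, apply Lemma~\ref{lemma:extensions}, the chain rule, and Lemma~\ref{lemma:FundamentaTheoremOfCalculus} to $\phi\circ F_n$, and pass to the limit using the uniform convergence $F_n\to F$ together with a compactness argument controlling $\phi'$ on a neighbourhood of the compact set $F([a,b])$. The only (immaterial) difference is in the final limit passage for the term $\|\phi'(F_n(s))-\phi'(F(s))\|\,\|f(s)\|_V$, where you use pointwise convergence plus dominated convergence while the paper derives uniform convergence of $\phi'(F_n(\cdot))$ to $\phi'(F(\cdot))$ via Lemma~\ref{lemma:Help_cont}.
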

\begin{proof}[Proof of Lemma~\ref{lemma:integral_chain_rule}]
Throughout this proof let
$ (f_n)_{n \in \N} \subseteq \mathcal{C}( [a,b], V ) $
be functions which satisfy
$ \limsup_{ n \to \infty } 
\int_a^b
\| f_n(s) - f(s) \|_V \, ds = 0 $
and let
$ F_n \colon [a,b] \to V $, $ n \in \N $,
be the functions which satisfy for all $ n \in \N $,
$ t \in [a,b] $ that
$ F_n(t) = F(a) + \int_a^t f_n(s) \, ds $.
Observe that the 
fact that $ f $ is strongly measurable,
the fact that
the function
$ [ a,b ] \ni s \mapsto \phi'( F(s) ) \in L(V,W) $ is continuous,
and 
the fact that
the function
$ V \times L(V,W) \ni (v, A) \mapsto A v \in W $ is continuous
implies that
$ [a,b] \ni s \mapsto  \phi'(F(s)) f(s) \in W $
is strongly measurable.
The fact that the function
$ [a,b] \ni s \mapsto \phi'( F( s ) ) \in L(V,W) $
is continuous
and the assumption that 
$ f $ 
is strongly measurable
and integrable
hence establish items~\eqref{item:L1defined}
and~\eqref{item:L1definedB}. 
Next observe that
Lemma~\ref{lemma:extensions}
(with
$ V = V $,
$ a = a $,
$ b = b $,
$ f = f_n $,
$ F = F_n $
for 
$ n \in \N $
in the notation of Lemma~\ref{lemma:extensions})
ensures that
for every $ n \in \N $ it holds that
$ F_n \in \mathcal{C}^1( [a,b], V ) $
and
$ F_n' = f_n $.
Lemma~\ref{lemma:FundamentaTheoremOfCalculus}
(with 
$ V = W $,
$ a = a $,
$ b = b $,
$ F = \phi \circ F_n $
for $ n \in \N $
in the notation of 
Lemma~\ref{lemma:FundamentaTheoremOfCalculus})
and the chain 
rule 
for Fr\'echet derivatives
therefore
prove that for all
$ n \in \N $, 
$ t \in [a,b] $ it holds that
\begin{equation}
\begin{split}
\label{eq:Basic_chain}
\phi(F_n(t)) - \phi(F_n(a)) =
\int_a^t \phi'(F_n(s)) f_n(s) \, ds
.
\end{split}
\end{equation}
Moreover, note that for all
$ n \in \N $,
$ t \in [a,b] $
it holds that
\begin{equation}
\begin{split}
\| F_n(t) - F(t) \|_V
=
\Big \| \int_a^t [ f_n(s) - f(s) ] \, ds \Big\|_V 
\leq
\int_a^b \| f_n(s) - f(s) \|_V \, ds  
.
\end{split}
\end{equation}
This assures that
\begin{equation}
\begin{split}
\label{eq:unifConv}
\limsup\nolimits_{n \to \infty} 
(
\sup\nolimits_{ s \in [a,b] }
\| F_n( s ) - F( s ) \|_V
)
=
0
.
\end{split}
\end{equation}
The fact that
$ \phi \in \mathcal{C}^1(V, W) $
hence shows that
for every $ t \in [a,b] $ it holds that
\begin{equation}
\begin{split}
\label{eq:cont_Conv}
\limsup\nolimits_{n \to \infty}  
\| \phi( F_n( t ) ) - \phi( F( t ) ) \|_{W} 
=
0 
\end{split}
\end{equation}
and
\begin{equation}
\begin{split}
\limsup\nolimits_{n \to \infty}  
\| \phi'( F_n( t ) ) - \phi'( F( t ) ) \|_{L(V, W)} 
=
0
.
\end{split}
\end{equation}
Next observe that for all
$ n \in \N $,
$ t \in [a,b] $ it holds that
\begin{equation}
\begin{split}
\label{eq:limit_estimate}
&
\int_a^t 
\| \phi'(F_n(s)) f_n(s) - \phi'(F(s)) f(s) \|_W \, ds 
\\
&
\leq 
\int_a^t 
\| \phi'(F_n(s))  \|_{L(V,W)} 
\|  f_n(s) - f(s) \|_V \, ds
+
\int_a^t 
\| \phi'(F_n(s)) - \phi'(F(s)) \|_{L(V,W)} 
\| f(s) \|_V \, ds
\\
&
\leq
\sup\nolimits_{ r \in [a,b] } 
\| \phi'( F(r) + ( F_n(r) - F(r) ) ) \|_{ L(V, W) } 
\int_a^b 
\| f_n(s) - f(s) \|_V
\, ds
\\
&
\quad
+
\sup\nolimits_{ r \in [a,b] }
\| \phi'( F(r) + ( F_n(r) - F(r) ) ) 
- 
\phi'(F(r)) \|_{L(V,W)} 
\int_a^b 
\| f (s) \|_V \, ds
.
\end{split}
\end{equation}
Lemma~\ref{lemma:Help_cont}
(with
$ V = V $,
$ d_V = ( V \times V \ni (v_1,v_2) \mapsto 
\| v_1 - v_2 \|_V \in \R ) $,
$ W = L(V,W) $, 
$ d_W = ( W \times W \ni (w_1, w_2 ) 
\mapsto \| w_1 - w_2 \|_W \in \R ) $,
$ a = a $,
$ b = b $,
$ g = ( [a,b] \times V \ni (t,x) \mapsto \phi'( F( t ) + x ) 
\in L(V, W) ) $
in the notation of 
Lemma~\ref{lemma:Help_cont})
and~\eqref{eq:unifConv}
prove that
for every $ \varepsilon \in (0, \infty ) $
there exists $ N \in \N $
such that
for every $ n \in [N, \infty) \cap \N $
it holds that
\begin{equation}
\begin{split}  
\label{eq:uniform2} 
\sup\nolimits_{ r \in [a,b] }
\| \phi'( F(r) + ( F_n(r) - F(r) ) ) 
- 
\phi'(F(r)) \|_{L(V,W)} 
< \varepsilon
.
\end{split}
\end{equation}
In particular, this 
and the fact that
$ \phi' \circ F \in \mathcal{C} ( [a,b], L(V, W) ) $
imply that
\begin{equation}
\begin{split} 
\label{eq:uniform1}
\sup\nolimits_{ n \in \N } 
\sup\nolimits_{ r \in [a,b] } 
\| \phi'( F(r) + ( F_n(r) - F(r) ) ) \|_{ L(V, W) }
< \infty 
.
\end{split} 
\end{equation} 
Combining~\eqref{eq:limit_estimate}--\eqref{eq:uniform1} 
and the fact that
$ \limsup_{ n \to \infty } 
\int_a^b
\| f_n(s) - f(s) \|_V \, ds = 0 $ 
ensures that
for every $ t \in [a,b] $ it holds that
\begin{equation}
\begin{split} 
\label{eq:int_conv}
\limsup_{ n \to \infty }
\Big\|
\int_a^t \phi'( F_n(s)) f_n(s) \, ds
-
\int_a^t \phi'( F(s)) f(s) \, ds
\Big\|_W
=
0.
\end{split} 
\end{equation}
Moreover, observe that~\eqref{eq:cont_Conv}
shows that for all
$ t \in [a,b] $ it holds that
\begin{equation}
\begin{split}
\limsup\nolimits_{ n \to \infty}
\| \phi ( F_n( t ) ) - \phi( F_n( a ) ) 
-
[ \phi( F( t ) ) - \phi( F( a ) ) ] \|_W
=
0.
\end{split}
\end{equation}
This, \eqref{eq:Basic_chain}, 
and~\eqref{eq:int_conv}
establish
item~\eqref{item:ChainRule}.
The proof of Lemma~\ref{lemma:integral_chain_rule}
is thus completed.
\end{proof} 
\begin{corollary}
	\label{corollary:integral_chain_rule} 
	Let $ ( V, \left\| \cdot \right \|_V ) $
	be a nontrivial $ \R $-Banach space,
	let
	$ ( W, \left\| \cdot \right \|_W ) $
	be an  
	$ \R $-Banach space, 
	let $ a \in \R $,
	$ b \in (a,\infty) $,  
	$ F \colon [a,b] \to V $,
	$ \phi \in \mathcal{C}^1([a, b] \times V, W) $,  
	$ \phi_{1,0} \colon [a,b] \times V \to W $,
	$ \phi_{0,1} \colon [a,b] \times V \to L(V,W) $
	satisfy for all
	$ t \in [a,b] $,
	$ x \in V $ 
	that
	$ \phi_{1,0}(t,x) = (\frac{\partial}{\partial t } \phi)(t,x) $,
	$ \phi_{0,1}(t,x) = (\frac{\partial}{\partial x } \phi)(t,x) $,
	and let
	$ f \colon [a,b] \to V $
	be a strongly measurable function
	which satisfies for all
	$ t \in [a,b] $ that
	$ \int_a^b \| f(s) \|_V \, ds < \infty $
	and
	$ F(t) - F(a)
	= \int_a^t f(s) \,ds $.
	Then 
	\begin{enumerate}[(i)]
	\item \label{item:FirstIt} it holds that
	$ 
	[a,b] \ni s \mapsto  
	 [ 
	\phi_{1,0} (s, F(s)) 
	+
	\phi_{0,1} (s, F(s)) 
	f(s) 
	 ] \in W  
$
is strongly measurable,
	\item 
	it holds that
	$ \int_a^b 	\|
	\phi_{1,0} (s, F(s)) 
	+
	\phi_{0,1} (s, F(s)) 
	f(s) 
	\|_W
	\, ds < \infty $,
	and
	\item \label{item:LastIt}
	it holds for all
	$ t \in [a, b] $ that
	\begin{equation} 
	\label{eq:integral_chain}
	\phi(t, F(t))
	-
	\phi( a, F(a))
	=
	\int_a^t 
	\big[
	\phi_{1,0}(s, F(s)) 
	+ 
	\phi_{0,1}(s, F(s)) f(s) 
	\big] \, ds 
	.
	\end{equation}
\end{enumerate}
\end{corollary}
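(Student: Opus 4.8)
The plan is to deduce Corollary~\ref{corollary:integral_chain_rule} from Lemma~\ref{lemma:integral_chain_rule} by absorbing the time variable into the state space. First I would set $\hat{V} = \R \times V$, which, endowed with, e.g., the norm $\hat{V} \ni (r,v) \mapsto |r| + \|v\|_V \in \R$, is a nontrivial $\R$-Banach space, and I would define $\hat{F}, \hat{f} \colon [a,b] \to \hat{V}$ by $\hat{F}(t) = (t, F(t))$ and $\hat{f}(s) = (1, f(s))$. Since $f$ is strongly measurable, so is $\hat{f}$; moreover $\int_a^b \|\hat{f}(s)\|_{\hat{V}} \, ds = (b-a) + \int_a^b \|f(s)\|_V \, ds < \infty$, and a componentwise application of the hypothesis $F(t) - F(a) = \int_a^t f(s)\,ds$ shows that $\hat{F}(t) - \hat{F}(a) = \int_a^t \hat{f}(s)\,ds$ for all $t \in [a,b]$.

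The only point requiring genuine care is that Lemma~\ref{lemma:integral_chain_rule} wants its $\mathcal{C}^1$ map to be defined on all of the source Banach space, whereas $\phi$ is given only on $[a,b] \times V$. I would resolve this by extending $\phi$ to a map $\hat{\phi} \in \mathcal{C}^1(\R \times V, W)$ with $\hat{\phi}|_{[a,b] \times V} = \phi$: first reflect in time, setting $\hat{\phi}(t,x) = 2\phi(b,x) - \phi(2b - t, x)$ for $t \in [b, 2b-a]$ and $\hat{\phi}(t,x) = 2\phi(a,x) - \phi(2a - t, x)$ for $t \in [2a - b, a]$ (so that the arguments $2b - t$, $2a - t$ stay in $[a,b]$); a short computation of one-sided partial $t$- and $x$-derivatives at $t = a$ and $t = b$ shows that the resulting map lies in $\mathcal{C}^1([2a-b, 2b-a] \times V, W)$. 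Multiplying by a fixed cutoff $\chi \in \mathcal{C}^\infty(\R, [0,1])$ with $\chi|_{[a,b]} = 1$ and $\operatorname{supp}(\chi) \subseteq (2a-b, 2b-a)$ and extending by $0$ then yields the desired $\hat{\phi}$. (Equivalently, and avoiding the extension altogether, one may simply repeat the approximation argument in the proof of Lemma~\ref{lemma:integral_chain_rule} verbatim for the pair $\hat{F}, \hat{f}$: the continuous approximants $[a,b] \ni s \mapsto (s, F(a) + \int_a^s f_n(\tau)\,d\tau)$ never leave $[a,b] \times V$, so $\phi$, $\phi_{1,0}$, $\phi_{0,1}$ are only ever evaluated on their domain.)

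Finally I would apply Lemma~\ref{lemma:integral_chain_rule} (with $V = \hat{V}$, $W = W$, $\phi = \hat{\phi}$, $F = \hat{F}$, $f = \hat{f}$ in its notation), which yields that $[a,b] \ni s \mapsto \hat{\phi}'(\hat{F}(s))\hat{f}(s) \in W$ is strongly measurable and integrable and that $\hat{\phi}(\hat{F}(t)) - \hat{\phi}(\hat{F}(a)) = \int_a^t \hat{\phi}'(\hat{F}(s))\hat{f}(s)\,ds$ for all $t \in [a,b]$. It then remains only to unwind the identifications: for $s \in [a,b]$ one has $\hat{F}(s) \in [a,b] \times V$, hence $\hat{\phi}(\hat{F}(s)) = \phi(s, F(s))$, and, by continuity of $\hat{\phi}'$ and of $\phi'$ on $[a,b] \times V$, $\hat{\phi}'(\hat{F}(s))(r,v) = r\,\phi_{1,0}(s, F(s)) + \phi_{0,1}(s, F(s))v$ for all $(r,v) \in \R \times V$; taking $(r,v) = (1, f(s))$ gives $\hat{\phi}'(\hat{F}(s))\hat{f}(s) = \phi_{1,0}(s, F(s)) + \phi_{0,1}(s, F(s))f(s)$. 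Substituting these identities into the three conclusions of Lemma~\ref{lemma:integral_chain_rule} yields items~\eqref{item:FirstIt}--\eqref{item:LastIt} of Corollary~\ref{corollary:integral_chain_rule}, in particular the claimed formula~\eqref{eq:integral_chain}, which completes the argument.
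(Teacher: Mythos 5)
Your proposal is correct and follows essentially the same route as the paper: the paper's proof likewise applies Lemma~\ref{lemma:integral_chain_rule} with $V$ replaced by $\R \times V$, $F$ by $s \mapsto (s,F(s))$, $f$ by $s \mapsto (1,f(s))$, and $\phi$ by a $\mathcal{C}^1$ extension $\Phi$ of $\phi$ to $\R \times V$ (whose existence the paper merely asserts there, justifying the reflection construction only later in Lemma~\ref{lemma:ExtendedF}). Your explicit construction of the extension and the unwinding of $\hat{\phi}'(\hat{F}(s))\hat{f}(s)$ into $\phi_{1,0}(s,F(s)) + \phi_{0,1}(s,F(s))f(s)$ supply details the paper leaves implicit.
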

\begin{proof}[Proof of Corollary~\ref{corollary:integral_chain_rule}]
	Throughout this proof let  
	$ \Phi \in \mathcal{C}^1(\R \times V, W) $
	be a function
	which satisfies for all
	$ t \in [a,b] $, $ x \in V $ 
	that
	$ \Phi(t,x) = \phi(t,x) $.
	Note that
	Lemma~\ref{lemma:integral_chain_rule}
	(with
	$ V = \R \times V $,
	$ W = W $, 
	$ a = a $,
	$ b = b $,
	$ \phi = \Phi $, 
	$ F = ( [a,b] \ni s \mapsto (s, F(s) ) \in \R \times V ) $,
	$ f = ( [a,b] \ni s \mapsto (1, f(s) ) \in \R \times V ) $
	in the notation of
	Lemma~\ref{lemma:integral_chain_rule}) 
	establishes items~\eqref{item:FirstIt}--\eqref{item:LastIt}.
	The proof of 
Corollary~\ref{corollary:integral_chain_rule}
is thus completed.
\end{proof}
\begin{lemma} 
	\label{lemma:ChangeOrder}  
Let 
	$ (V, \left\|\cdot \right\|_V) $ be an 
	$ \R $-Banach space,
	let
	$ a \in \R $,
	$ b \in (a, \infty) $,
	$ t_0 \in [a,b] $, 
	$ (f_n)_{n \in \N } \subseteq \mathcal{C}^1([a,b],V) $,
	and assume that 
	$ ( f_n(t_0) )_{n \in \N} \subseteq V $ 
	and  
	$ ( (f_n)' )_{n \in \N } \subseteq \mathcal{C}( [a,b], V ) $ 
	are convergent.
	Then 
	there exists $ F \in \mathcal{C}^1( [a,b], V ) $
	such that for every $ t \in [a,b] $ it holds that 
	\begin{equation}
	\begin{split}
	\limsup_{ n \to \infty }
	\big( 
	\| f_n(t) - F(t) \|_V
	+ 
	\| (f_n)'(t) - F'(t) \|_V
	\big)
	=
	0
	.
	\end{split}
	\end{equation}
\end{lemma}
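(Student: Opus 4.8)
The plan is to realise the candidate limit $F$ as an indefinite integral of the uniform limit of the derivatives. First I would let $g \in \mathcal{C}([a,b], V)$ be the limit of $((f_n)')_{n \in \N}$ in $\mathcal{C}([a,b], V)$, which exists by hypothesis and is indeed continuous since a uniform limit of continuous $V$-valued functions on a compact interval is continuous (here completeness of $V$ is used), and I would let $v_0 \in V$ be the limit of $(f_n(t_0))_{n \in \N}$ in $V$. Then I would define $F \colon [a,b] \to V$ by $F(t) = v_0 + \int_{t_0}^t g(s) \, ds$. Rewriting this as $F(t) = \big( v_0 - \int_a^{t_0} g(s) \, ds \big) + \int_a^t g(s) \, ds$, Lemma~\ref{lemma:extensions} (applied with $f = g$) shows that $F$ is differentiable with $F'(t) = g(t)$ for every $t \in [a,b]$, and since $g$ is continuous this yields $F \in \mathcal{C}^1([a,b], V)$.

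Next I would control $f_n - F$. Applying Lemma~\ref{lemma:FundamentaTheoremOfCalculus} to each $f_n \in \mathcal{C}^1([a,b], V)$ gives $f_n(t) = f_n(t_0) + \int_{t_0}^t (f_n)'(s) \, ds$ for all $t \in [a,b]$, so
\[
\| f_n(t) - F(t) \|_V
\leq
\| f_n(t_0) - v_0 \|_V
+
(b-a) \, \sup\nolimits_{ s \in [a,b] } \| (f_n)'(s) - g(s) \|_V .
\]
The right-hand side tends to $0$ as $n \to \infty$ by the two convergence hypotheses, uniformly in $t$; in particular $\limsup_{n \to \infty} \| f_n(t) - F(t) \|_V = 0$ for every $t \in [a,b]$. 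Since $F' = g$, the term $\| (f_n)'(t) - F'(t) \|_V = \| (f_n)'(t) - g(t) \|_V$ likewise tends to $0$ for every $t$. Adding the two estimates gives the claimed convergence.

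There is no genuinely hard step: everything reduces to the fundamental theorem of calculus for $V$-valued functions (Lemmas~\ref{lemma:extensions} and~\ref{lemma:FundamentaTheoremOfCalculus}) together with the trivial bound on the norm of an integral. The only points requiring a word of care are the passage from $\int_{t_0}^t$ to $\int_a^t$ so that Lemma~\ref{lemma:extensions} applies verbatim, and the remark that completeness of $V$ guarantees that the uniform limit $g$ of the continuous functions $(f_n)'$ is again continuous, so that $F$ genuinely lands in $\mathcal{C}^1([a,b],V)$ and not merely in the class of everywhere-differentiable functions.
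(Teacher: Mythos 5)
Your proof is correct and follows essentially the same route as the paper's: both realise $F$ as $v_0+\int_{t_0}^{t}g(s)\,ds$ where $g$ is the uniform limit of the derivatives, apply Lemma~\ref{lemma:FundamentaTheoremOfCalculus} to each $f_n$, and invoke Lemma~\ref{lemma:extensions} to conclude $F\in\mathcal{C}^1$ with $F'=g$. The only cosmetic difference is that you pass the limit through the integral via the direct bound $(b-a)\sup_{s}\|(f_n)'(s)-g(s)\|_V$, whereas the paper cites the dominated convergence theorem for the same step.
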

\begin{proof}[Proof of Lemma~\ref{lemma:ChangeOrder}]
Throughout this proof let
$ F, g \colon [a,b] \to V $
be the functions which satisfy for all
$ t \in [a,b] $ that
$ \limsup_{ n \to \infty }
\sup_{ s \in [0,T] }
\| (f_n)'( s ) - g( s ) \|_V = 0 $,
$ F(t_0) = \lim_{n \to \infty } f_n(t_0) $,
and
\begin{equation} 
\label{eq:Defined} 
F(t) = F(t_0) + \int_{t_0}^t g(s) \, ds 
.
\end{equation}
Observe that
Lemma~\ref{lemma:FundamentaTheoremOfCalculus}
(with
$ V = V $,
$ a = a $,
$ b = b $,
$ F = f_n $
for $ n \in \N $
in the notation of
Lemma~\ref{lemma:FundamentaTheoremOfCalculus})
shows that for all
$ n \in \N $, $ t \in [a,b] $ it holds that
\begin{equation}
\label{eq:Fund}
f_n(t) = f_n(t_0) + \int_{t_0}^t (f_n)'(s) \, ds
.
\end{equation}
The assumption that
$ ( (f_n)' )_{n \in \N } \subseteq \mathcal{C}( [a,b], V ) $ 
converges ensures that
\begin{equation}
\sup\nolimits_{ n \in \N } 
\sup\nolimits_{ s \in [a,b] }
\| (f_n)'(s) \|_V < \infty.
\end{equation} 
The dominated convergence theorem 
therefore
proves that
for every $ t \in [a,b] $ 
it holds that
\begin{equation}
\begin{split}
\limsup_{ n \to \infty }
\Big\|
\int_{t_0}^t 
g(s) \, ds
- 
\int_{t_0}^t 
(f_n)'(s) \, ds
\Big\|_V =
0. 
\end{split}
\end{equation}
This and~\eqref{eq:Fund} imply that for all
$ t \in [a,b] $ it holds that
\begin{equation}
\begin{split}
\lim_{n \to \infty}
f_n(t)
=
F(t_0)
+
\int_{t_0}^t 
g(s) \, ds  
.
\end{split}
\end{equation}
Equation~\eqref{eq:Defined} 
hence assures for all
$ t \in [a,b] $ that
\begin{equation}
F(t) = \lim_{n \to \infty} f_n(t) .
\end{equation}
The fact that
$ g \in \mathcal{C}( [a,b], V ) $,
\eqref{eq:Defined},
and 
Lemma~\ref{lemma:extensions}
(with
$ V = V $,
$ a = a $,
$ b = b $,
$ f = g $,
$ F = F $
in the notation of Lemma~\ref{lemma:extensions})
establish that  
for every $ t \in [a,b] $ it holds that
$ F
\in 
\mathcal{C}^1( [a,b], V) $
and
$ F'(t) = g(t) $.
The proof
of Lemma~\ref{lemma:ChangeOrder}
is thus completed.
\end{proof}
\begin{proposition}
	\label{proposition:Banach_space_alekseev_grobner}
	Let 
	$ (V, \left\|\cdot \right\|_V) $ 
	be a nontrivial $ \R $-Banach space,
	let 
	$ t_0 \in \R $, 
	$ t \in (t_0, \infty) $, 
	$ \phi \in \mathcal{C}^1(V, V) $,  
	$ F \colon [t_0,t] \to V $,
	$ \Phi \colon \{ (u, r ) \in [t_0, t]^2 \colon u \leq r \} \times V \to V $,
	$ \dot \Phi \colon \{ (u, r ) \in [t_0, t]^2 \colon u \leq r \} \times V \to V $,
	$ \Phi^\star \colon \{ (u,r) \in [t_0,t]^2 \colon u\leq r \} \times V
	\to L(V) $,
let
$ f \colon [t_0, t] \to V $
be a strongly measurable function,
assume 
that
$ ( [t_0,t] \times V \ni (u,x) \mapsto \Phi_{u,t}(x) \in V )
\in \mathcal{C}^1( [t_0,t] \times V, V ) $, 
assume for all
$ x \in V $,
$ t_1 \in (t_0,t) $ 
that
$ ( [t_0, t_1] \ni u \mapsto \Phi_{u,t_1}(x) \in V ) \in \mathcal{C}^1( [t_0, t_1], V ) $,
$ ( [t_0, t_1] \ni u \mapsto \dot \Phi_{u, t_1}(x) \in V ) \in \mathcal{C} ( [t_0, t_1], V ) $,
$ ( \{(u,r)\in(t_0,t)^2 \colon u< r\} \ni (s, \tau) \mapsto
\Phi_{s, \tau}(x) \in V ) \in \mathcal{C}^1 ( \{(u,r)\in(t_0,t)^2 \colon u< r\}, V ) $,
 assume 
	for all  
	$ t_1 \in [t_0, t] $,
	$ t_2 \in [t_1, t] $,
	$ t_3 \in [t_2, t] $,
	$ x \in V $
	that 
	$ \int_{ t_0 }^t \| f( u ) \|_V \, du < \infty $,
	$ F(t_1) = F(t_0) + \int_{t_0}^{t_1} f(u)\, du $, 
	$ \Phi_{t_1, t_1}(x) = x $,
	$ \Phi_{t_1, t_3}(x) = \Phi_{t_2, t_3}( \Phi_{t_1, t_2}(x)) $, 
	assume for all
	$ t_1 \in (t_0,t) $,
	$ t_2 \in (t_1, t) $,
	$ x \in V $ 
	that 
	$ \dot \Phi_{t_1, t_2}(x) = \tfrac{\partial}{\partial t_2} ( \Phi_{t_1, t_2}(x) ) $,
	and assume for all 
	$ t_1 \in [t_0,t] $, 
	$ x \in V $
	that
	$ \Phi_{t_1, t }^\star(x) =
	\tfrac{\partial}{\partial x}
	( \Phi_{t_1, t } (x) ) $.
	Then 
	\begin{enumerate}[(i)]
	\item it holds that
	$ 
	 [t_0, t] \ni s \mapsto
	\phi' ( \Phi_{s, t} ( F(s) ) ) \Phi_{s, t}^\star  ( F(s) ) 
	  [ \dot \Phi_{s,s}  ( F(s) ) - f(s) ]  
	\in V   
	$
	is strongly measurable,
	\item it holds that
	$ \int_{ t_0 }^t 
	\| 
	\phi' ( \Phi_{s, t} ( F(s) ) ) \Phi_{s, t}^\star  ( F(s) ) 
	[ \dot \Phi_{s,s}  ( F(s) ) - f(s) ]   
	\|_V
	\, ds < \infty $,
	and
	\item it holds that
	\begin{equation}
	\begin{split}
	\label{eq:GeneralAlekseev} 
	&
	\phi ( \Phi_{t_0, t}  ( F(t_0) ) ) - \phi ( F(t) )
	=
	\int_{ t_0 }^t \phi'  ( \Phi_{s, t}   ( F(s)  )  ) \Phi_{s, t}^\star  ( F(s)   ) 
	\big[ \dot \Phi_{s,s}  ( F(s) ) - f(s)  \big] 
	\, 
	ds.
	\end{split}
	\end{equation}
\end{enumerate}
\end{proposition}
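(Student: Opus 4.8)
The plan is to realise the left-hand side of~\eqref{eq:GeneralAlekseev} as the total increment, from $s=t_0$ to $s=t$, of the curve $[t_0,t]\ni s\mapsto\phi(\Phi_{s,t}(F(s)))\in V$: by the identity $\Phi_{t_1,t_1}(x)=x$ this curve equals $\phi(\Phi_{t_0,t}(F(t_0)))$ at $s=t_0$ and $\phi(\Phi_{t,t}(F(t)))=\phi(F(t))$ at $s=t$. Since $F$ is only the indefinite integral of the strongly measurable and integrable function $f$ and need not be $\mathcal{C}^1$, I would compute this increment with the extended chain rule, i.e.\ Corollary~\ref{corollary:integral_chain_rule}, applied to the map $\Theta\colon[t_0,t]\times V\to V$, $\Theta(s,x)=\phi(\Phi_{s,t}(x))$. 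Because $\phi\in\mathcal{C}^1(V,V)$ and $([t_0,t]\times V\ni(u,x)\mapsto\Phi_{u,t}(x)\in V)\in\mathcal{C}^1([t_0,t]\times V,V)$ by hypothesis, the chain rule for Fr\'echet derivatives gives $\Theta\in\mathcal{C}^1([t_0,t]\times V,V)$ with $(\tfrac{\partial}{\partial x}\Theta)(s,x)=\phi'(\Phi_{s,t}(x))\Phi_{s,t}^\star(x)$ and $(\tfrac{\partial}{\partial s}\Theta)(s,x)=\phi'(\Phi_{s,t}(x))\big(\tfrac{\partial}{\partial s}\Phi_{s,t}(x)\big)$.

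The crux is to identify $\tfrac{\partial}{\partial s}\Phi_{s,t}(x)$ in terms of $\Phi^\star$ and $\dot\Phi$. Here I would exploit the flow property: for fixed $x\in V$ and $t_0<s<\sigma<t$ it gives $\Phi_{s,t}(x)=\Phi_{\sigma,t}(\Phi_{s,\sigma}(x))$, so the $\sigma$-independent vector $\Phi_{s,t}(x)$ is the composition of $(u,z)\mapsto\Phi_{u,t}(z)$ (which is $\mathcal{C}^1$ on $[t_0,t]\times V$) with the map $\sigma\mapsto(\sigma,\Phi_{s,\sigma}(x))$, the latter being $\mathcal{C}^1$ on $(s,t)$ by the assumed joint $\mathcal{C}^1$-regularity of $(u,r)\mapsto\Phi_{u,r}(x)$ on the open triangle and having $\sigma$-derivative $(1,\dot\Phi_{s,\sigma}(x))$. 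Differentiating this (constant) composition via the Fr\'echet chain rule yields, for all $t_0<s<\sigma<t$,
\[
0=\big(\tfrac{\partial}{\partial u}\Phi_{u,t}(\Phi_{s,\sigma}(x))\big)\big|_{u=\sigma}+\Phi_{\sigma,t}^\star(\Phi_{s,\sigma}(x))\,\dot\Phi_{s,\sigma}(x).
\]
I would then let $s\uparrow\sigma$ for fixed $\sigma\in(t_0,t)$: by the assumed continuity of $[t_0,\sigma]\ni u\mapsto\Phi_{u,\sigma}(x)$ one has $\Phi_{s,\sigma}(x)\to\Phi_{\sigma,\sigma}(x)=x$, by the assumed continuity of $[t_0,\sigma]\ni u\mapsto\dot\Phi_{u,\sigma}(x)$ one has $\dot\Phi_{s,\sigma}(x)\to\dot\Phi_{\sigma,\sigma}(x)$, and continuity of the partial derivatives of $(u,z)\mapsto\Phi_{u,t}(z)$ together with continuity of $L(V)\times V\ni(A,v)\mapsto Av\in V$ allow passing to the limit, giving
\[
\tfrac{\partial}{\partial s}\Phi_{s,t}(x)=-\,\Phi_{s,t}^\star(x)\,\dot\Phi_{s,s}(x)\qquad\text{for all }s\in(t_0,t),\ x\in V.
\]

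With this identity in hand, for all $s\in(t_0,t)$
\[
(\tfrac{\partial}{\partial s}\Theta)(s,F(s))+(\tfrac{\partial}{\partial x}\Theta)(s,F(s))f(s)=-\,\phi'(\Phi_{s,t}(F(s)))\,\Phi_{s,t}^\star(F(s))\,\big[\dot\Phi_{s,s}(F(s))-f(s)\big],
\]
and I would finish by invoking Corollary~\ref{corollary:integral_chain_rule} (with $V=V$, $W=V$, $a=t_0$, $b=t$, $\phi=\Theta$, $F=F$, $f=f$ in its notation). Its first two assertions give that $s\mapsto(\tfrac{\partial}{\partial s}\Theta)(s,F(s))+(\tfrac{\partial}{\partial x}\Theta)(s,F(s))f(s)$ is strongly measurable and integrable on $[t_0,t]$; since, by the last display, this function differs from $s\mapsto-\phi'(\Phi_{s,t}(F(s)))\Phi_{s,t}^\star(F(s))[\dot\Phi_{s,s}(F(s))-f(s)]$ at most at the two points $t_0$ and $t$ (and modification on a finite set affects neither strong measurability nor Lebesgue integrals), this yields assertions~(i) and~(ii) of the proposition. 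Its third assertion gives $\Theta(t,F(t))-\Theta(t_0,F(t_0))=\int_{t_0}^t[(\tfrac{\partial}{\partial s}\Theta)(s,F(s))+(\tfrac{\partial}{\partial x}\Theta)(s,F(s))f(s)]\,ds$, and, using $\Phi_{t,t}(F(t))=F(t)$, the left-hand side equals $\phi(F(t))-\phi(\Phi_{t_0,t}(F(t_0)))$; substituting the displayed expression for the integrand and multiplying by $-1$ produces exactly~\eqref{eq:GeneralAlekseev}.

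The main obstacle will be the second paragraph: turning the formal ``differentiate the flow identity'' calculation into a rigorous statement forces a passage to the diagonal $\sigma=s$ that must be matched precisely to the available hypotheses (continuity of $u\mapsto\Phi_{u,t_1}(x)$ and of $u\mapsto\dot\Phi_{u,t_1}(x)$ on the closed intervals $[t_0,t_1]$, the $\mathcal{C}^1$-regularity of $(u,x)\mapsto\Phi_{u,t}(x)$, and the joint $\mathcal{C}^1$-regularity on the open triangle), and it is here that a Rudin-type differentiate-the-limit statement such as Lemma~\ref{lemma:ChangeOrder} is naturally used to justify interchanging the $s$-limit with differentiation. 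Everything else is bookkeeping with the extended chain-rule lemmas and the flow property; alternatively one may split the final step into an application of Corollary~\ref{corollary:integral_chain_rule} to $s\mapsto\Phi_{s,t}(F(s))$ followed by an application of Lemma~\ref{lemma:integral_chain_rule} to compose with $\phi$.
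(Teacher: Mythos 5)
Your proposal is correct, and it reaches the crucial identity $\tfrac{\partial}{\partial s}\Phi_{s,t}(x)=-\Phi^\star_{s,t}(x)\dot\Phi_{s,s}(x)$ by a genuinely different route than the paper. The paper differentiates the flow identity $\Phi_{s,t}(x)=\Phi_{u,t}(\Phi_{s,u}(x))$ with respect to the \emph{initial} time $s$, obtaining $\mathring\Phi_{s,t}(x)=\Phi^\star_{u,t}(\Phi_{s,u}(x))\mathring\Phi_{s,u}(x)$ (with $\mathring\Phi$ the derivative in the first time variable), and then separately computes the diagonal value $\mathring\Phi_{s,s}(x)=-\dot\Phi_{s,s}(x)$ by differentiating $s\mapsto\Phi_{s-(s-t_0)/n,s}(x)$ and letting $n\to\infty$ via Lemma~\ref{lemma:ChangeOrder}; this is precisely the limit/derivative interchange you flag as the main obstacle. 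You instead differentiate the flow identity with respect to the \emph{intermediate} time $\sigma$ --- the composition is constant in $\sigma$, so the chain rule gives $0=\mathring\Phi_{\sigma,t}(\Phi_{s,\sigma}(x))+\Phi^\star_{\sigma,t}(\Phi_{s,\sigma}(x))\dot\Phi_{s,\sigma}(x)$ --- and then let $s\uparrow\sigma$, which only uses the assumed continuity of $u\mapsto\Phi_{u,\sigma}(x)$ and $u\mapsto\dot\Phi_{u,\sigma}(x)$ on $[t_0,\sigma]$ together with the continuity of the partial derivatives of the $\mathcal{C}^1$ map $(u,z)\mapsto\Phi_{u,t}(z)$ and of the evaluation $L(V)\times V\ni(A,v)\mapsto Av$. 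This actually makes Lemma~\ref{lemma:ChangeOrder} superfluous (contrary to your closing remark that it would still be needed): you pass a pointwise identity between continuous quantities to the diagonal rather than interchanging a limit with differentiation. The remaining bookkeeping is equivalent: you apply Corollary~\ref{corollary:integral_chain_rule} once to $\Theta(s,x)=\phi(\Phi_{s,t}(x))$, whereas the paper applies Corollary~\ref{corollary:integral_chain_rule} to $(s,x)\mapsto\Phi_{s,t}(x)$ and then Lemma~\ref{lemma:integral_chain_rule} to compose with $\phi$ (you note this alternative yourself), and your treatment of the endpoints $s\in\{t_0,t\}$, where the diagonal identity is only available on the open interval, is the same null-set observation the paper leaves implicit.
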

\begin{proof}[Proof
	of Proposition~\ref{proposition:Banach_space_alekseev_grobner}]
	Throughout this proof let 
	\begin{equation}
	\mathring \Phi
	=
	(
	\mathring \Phi_{t_1, t_2}(x)
	)_{ (t_1, t_2, x) 
\in 
\{ (u,r) \in [t_0, t]^2
\colon u \leq r \} \times V	
}
	\colon 
	\{ (u,r) \in [t_0, t]^2
	\colon u \leq r \} \times V
	\to 
	V
	\end{equation}
	be a function which satisfies
	for all  
	$ t_2 \in (t_0, t] $,
	$ t_1 \in [t_0, t_2] $,
	$ x \in V $ 
	that
	$ \mathring \Phi_{t_1, t_2}(x) = \tfrac{\partial}{\partial t_1} 
	\Phi_{t_1,t_2}(x) $
	and
	let
	$ \varphi \colon [ t_0, t ] \times V \to V $
	be the function
	which satisfies
	for all
	$ s \in [t_0, t] $,
	$ v \in V $
	that
	$ \varphi(s, v) = \Phi_{s, t}(v) $.
	Note that
	the assumption that
	$ ( [t_0,t] \times V \ni (s,x) \mapsto \Phi_{s,t}(x) \in V )
	\in \mathcal{C}^1( [t_0,t] \times V, V ) $
	shows that
	for every $ \tau \in [t_0, t) $
	it holds that
	$ \varphi |_{ [ \tau, t] \times V } \in \mathcal{C}^1( [\tau, t] \times V, V ) $.
	Corollary~\ref{corollary:integral_chain_rule}
	(with
	$ V = V $,
	$ W = V $,
	$ a = \tau $,
	$ b = t $, 
	$ \phi = \varphi |_{ [ \tau, t] \times V } $,
	$ F = F|_{ [\tau, t]} $,
	$ f = f|_{ [\tau, t]} $
	for $ \tau \in [t_0, t) $
	in the notation of
	Corollary~\ref{corollary:integral_chain_rule})
	therefore
	implies
	\begin{enumerate}[(a)]
		\item \label{item:integrability_condition1} that  
	$ [ t_0, t ] \ni s \mapsto 
	\big[
	\Phi^\star_{s, t }( F( s ) ) f( s ) 
	+
	\mathring \Phi_{s, t} ( F( s ) )
	\big] \in V $
	 is strongly measurable, 
	\item that
	$
	\int_{t_0}^t
	 \|
	\Phi^\star_{s, t }( F( s ) ) f( s ) 
	+
	\mathring \Phi_{s, t} ( F( s ) )
	 \|_V
	\, ds < \infty 
	$,
	and 
	\item that
	for every 
	$ \tau \in [t_0, t ] $
	it holds that  
	\begin{equation}
	\begin{split}
	\Phi_{ t , t  }( F( t  ) )
	-
	\Phi_{ \tau, t  }( F( \tau ) )
	=
	F( t  )
	-
	\Phi_{ \tau, t  }( F( \tau ) ) 
	=
	\int_{\tau}^t
	\big[
	\Phi^\star_{s, t }( F( s ) ) f( s ) 
	+
	\mathring \Phi_{s, t} ( F( s ) )
	\big]
	\,
	ds
	.
	\end{split}
	\end{equation}
\end{enumerate}
	Lemma~\ref{lemma:integral_chain_rule}
	(with 
	$ V = V $,
	$ W = V $,
	$ a = t_0 $,
	$ b = t $,
	$ \phi = \phi $,
	$ F = ( [t_0, t ] \ni s \mapsto \Phi_{s,t}( F(s) ) \in V ) $,
	$ f = ( [t_0, t ] \ni s \mapsto \Phi^\star_{s,t}( F( s ) ) f( s ) 
	+
	\mathring \Phi_{s,t} ( F( s ) ) \in V ) $ 
	in the notation of 
	Lemma~\ref{lemma:integral_chain_rule})  
	hence shows 
	\begin{enumerate}[(A)]
	\item \label{item:Banach_Alekseev_integrable}
	that
	$ [t_0, t] \ni s
	 \mapsto
	\phi'  ( \Phi_{s,t}  ( F(s)  ) )
	\big [ 
	\Phi_{s,t}^\star ( F(s) ) f(s) 
	+ 
	\mathring \Phi_{s,t} ( F(s) )
	\big]
	\in V $ 
	is strongly measurable, 
	\item that
	$ \int_{t_0}^t
	\big\|
	\phi'  ( \Phi_{s,t}  ( F(s)  ) )
	\big [ 
	\Phi_{s,t}^\star ( F(s) ) f(s) 
	+ 
	\mathring \Phi_{s,t} ( F(s) )
	\big]
	\big\|_V
	\, ds
	<
	\infty $,
	and
	\item \label{item:Banach_Alekseev_0} that
	\begin{equation} 
	\begin{split}
	& 
	\phi ( F(t) )
	-
	\phi ( \Phi_{ t_0, t} ( F( t_0 ) ) ) 
	= 
	\phi ( \Phi_{t, t} ( F(t) ) )     
	-
	\phi ( \Phi_{ t_0, t} ( F( t_0 )  )  )  
	\\
	&
	= 
	\int_{ t_0 }^t
	\phi'  ( \Phi_{s,t}  ( F(s)  ) )
	\big [ 
	\Phi_{s,t}^\star ( F(s) ) f(s) 
	+ 
	\mathring \Phi_{s,t} ( F(s) )
	\big]
	\,
	ds
	.
	\end{split}
	\end{equation}
	\end{enumerate}
	Next observe that 
		the assumption that
	$ ( [t_0,t] \times V \ni (s,x) \mapsto \Phi_{s,t}(x) \in V )
	\in \mathcal{C}^1( [t_0,t] \times V, V ) $
	and
	the chain rule 
	ensure that
	for all 
	$ u \in (t_0, t) $, 
	$ s \in (t_0, u] $, 
	$ x \in V $ 
	it holds that
	\begin{equation}
	\label{eq:Banach_Alekseev_1}
	\mathring \Phi_{s, t} (x) 
	= 
	\tfrac{ \partial }{\partial s}  
	\Phi_{s, t}(x)  
	=
	\tfrac{ \partial }{\partial s} 
	 ( 
	\Phi_{u, t}
	 ( \Phi_{s,u} (x) ) 
	 )
	=
	\Phi_{u,t}^\star
	 ( \Phi_{s, u}(x)  ) 
	\mathring
	\Phi_{s, u}(x).
	\end{equation}
	Moreover, note that
	the assumption that
	for all
	$ x \in V $
	it holds that
	$ \{(u,r)\in(t_0,t)^2:u< r\} \ni (s, \tau) \mapsto
	\Phi_{s, \tau}(x) \in V $
	is continuously differentiable
	implies that for all 
	$ s \in (t_0, t) $,
	$ n \in [2, \infty) \cap \N $,
	$ x \in V $
	it holds that
	\begin{equation}
	\begin{split}
	\tfrac{ \partial }{ \partial s }
	\Phi_{s-(s-t_0)/n, s}(x)
	=
	( 1 - \nicefrac{1}{n} )
	\mathring \Phi_{s-(s-t_0)/n, s}(x)
	+
	\dot \Phi_{s-(s-t_0)/n,s}(x)
	.
	\end{split}
	\end{equation}
	Combining the fact that 
	for all
	$ \varepsilon \in (0, \nicefrac{ (t-t_0) }{ 6 } ) $,
	$ n \in [2, \infty) \cap \N $,
	$ x \in V $
	it holds that
	$ [t_0 + \varepsilon, t- \varepsilon] \ni s \mapsto \Phi_{ s - ( s - t_0)/n, s }(x) \in V $
	is continuously differentiable,
	Lemma~\ref{lemma:ChangeOrder}
	(with
	$ V = V $,
	$ a = t_0 + \varepsilon $,
	$ b = t - \varepsilon $,
	$ t_0 = \nicefrac{ ( t_0 + t ) }{ 2 } $, 
	$ f_n = ( [t_0 + \varepsilon, t- \varepsilon] \ni s \mapsto \Phi_{ s - ( s - t_0)/n, s }(x) \in V ) $
	for  
	$ \varepsilon \in (0, \nicefrac{ ( t - t_0 ) }{ 6 } ) $,
	$ n \in [2, \infty) \cap \N $,
	$ x \in V $
	in the notation of Lemma~\ref{lemma:ChangeOrder}),
	and the assumptions that 
	$ \forall \, x \in V $, $ t_1 \in (t_0,t) 
	\colon 
	( [t_0, t_1] \ni u \mapsto \Phi_{u,t_1}^x \in V ) \in \mathcal{C}^1( [t_0, t_1], V ) $
	and
	$ ( [t_0, t_1] \ni u \mapsto \dot \Phi_{u, t_1}^x \in V ) \in \mathcal{C} ( [t_0, t_1], V ) $ 
	therefore proves
	that
	for all
	$ s \in (t_0, t) $, 
	$ x \in V $
	it holds
	that
	\begin{equation}
	\begin{split}
	\tfrac{ \partial }{ \partial s }
	\Phi_{s, s}(x)
	=
	\mathring \Phi_{s, s}(x)
	+
	\dot \Phi_{s,s}(x)
	.
	\end{split}
	\end{equation}
	Hence, we obtain that
	for all $ s \in (t_0, t) $, $ x \in V $ it holds that
	\begin{equation}
	\mathring \Phi_{s,s}(x)
	+
	\dot \Phi_{s,s}(x)
	=
	0
	.
	\end{equation}
	This and~\eqref{eq:Banach_Alekseev_1} 
	imply that for all $ s \in (t_0, t) $, $ x \in V $ 
	it holds that
	\begin{equation}
	\label{eq:Banach_Alekseev_2}
	\mathring \Phi_{s,t}(x) 
	=
	\Phi_{s,t}^\star ( \Phi_{s,s} (x) ) \mathring \Phi_{s,s}(x)
	=
	\Phi_{s,t}^\star(x) \mathring \Phi_{s,s}(x)
	=
	- \Phi_{s,t}^\star(x) \dot \Phi_{s,s}(x)
	.
	\end{equation} 
	Combining this with items~\eqref{item:Banach_Alekseev_integrable}--\eqref{item:Banach_Alekseev_0}   
completes the proof of
	Proposition~\ref{proposition:Banach_space_alekseev_grobner}. 
\end{proof}
\section{Continuity of solutions to initial value problems}
\label{subsection:Continuity of solutions}
In this section we prove in Corollary~\ref{Corollary:UnifCont} joint continuity of the solution to a Banach space valued ODE 
with respect to initial value, initial time, and current time.
More precisely,
we first apply
Lemma~\ref{lemma:uniqueness} 
to prove a local existence and uniqueness result
for initial value problems in
Lemma~\ref{lemma:Existence}. 
Then
we combine 
Lemma~\ref{lemma:Existence},
Lemma~\ref{lemma:trivial}, 
Corollary~\ref{corollary:Extension},
and
Lemmas~\ref{lemma:UnifCont}--\ref{lemma:flow_property}
to establish
Corollary~\ref{Corollary:UnifCont}.
\begin{lemma}
	\label{lemma:uniqueness}
	Let
	$ (V, \left\| \cdot \right\|_V) $ 
	be a nontrivial $ \R $-Banach space 
	and
	let
	$ a \in \R $,
	$ b \in [a, \infty) $, 
	$ s \in [a, b] $,  
	$ f \in \mathcal{C} ( [a,b] \times V, V) $, 
	$ X, Y \in \mathcal{C}([a, b], V) $
	satisfy for all  
	$ t \in [a,b ] $, 
	$ x \in V $
	that
	$ X_t -
	\int_s^t f( \tau, X_\tau) \,d\tau 
	= 
	Y_t
	-
	\int_s^t f( \tau, Y_\tau) \,d\tau $
	and
		$ 
		\inf_{ r \in (0, \infty) } 
		\sup_{ \tau \in [a,b] } 
		\sup_{  y \in V, \| x - y \|_V \leq r }
		\sup_{ 
			z \in V \backslash \{ y \}, 
			\| x - z \|_V \leq r  }   
	\tfrac{
		\| f(\tau, y) - f( \tau, z ) \|_V
	}{
		\| y - z \|_V
	}
	< \infty $.
	Then it holds for all
	$ t \in [a, b] $ that
	$ X_t = Y_t $. 
\end{lemma}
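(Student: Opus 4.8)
The plan is to combine the integral identity with a Gr\"onwall estimate and a two-sided continuation (open--closed) argument in time, invoking the local Lipschitz hypothesis afresh at each point reached. First I would rewrite the hypothesis as $X_t - Y_t = \int_s^t [f(\tau,X_\tau) - f(\tau,Y_\tau)]\,d\tau$ for all $t\in[a,b]$; taking $t=s$ gives $X_s=Y_s$. Since $f$, $X$, $Y$ are continuous, $\tau\mapsto f(\tau,X_\tau)$ and $\tau\mapsto f(\tau,Y_\tau)$ are continuous, so all integrals below are integrals of continuous $V$-valued functions and raise no measurability or integrability issues. Let $g(t):=\|X_t-Y_t\|_V$; then $g\in\mathcal{C}([a,b],[0,\infty))$, $g(s)=0$, and $g(t)\le\big|\int_s^t\|f(\tau,X_\tau)-f(\tau,Y_\tau)\|_V\,d\tau\big|$ for all $t\in[a,b]$.

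Next I would set $\beta:=\sup(\{s\}\cup\{t\in[s,b]\colon g|_{[s,t]}\equiv0\})$ and, symmetrically, $\alpha:=\inf(\{s\}\cup\{t\in[a,s]\colon g|_{[t,s]}\equiv0\})$, with the aim of proving $\alpha=a$ and $\beta=b$, which forces $g\equiv0$ on $[a,b]$ and hence the assertion. For $\beta=b$: continuity of $g$ yields $g(\beta)=0$, i.e.\ $X_\beta=Y_\beta$; suppose $\beta<b$. Applying the hypothesis at $x:=X_\beta$ gives $r\in(0,\infty)$ and $L\in[0,\infty)$ such that $\|f(\tau,y)-f(\tau,z)\|_V\le L\|y-z\|_V$ for all $\tau\in[a,b]$ and all $y,z$ in the closed $r$-ball $B$ around $X_\beta$ (trivial for $y=z$, the ratio bound for $y\ne z$). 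Continuity of $X$ and $Y$ together with $X_\beta=Y_\beta$ provides $\eps\in(0,\infty)$ with $\beta+\eps\le b$ such that $X_\tau,Y_\tau\in B$ for all $\tau\in[\beta,\beta+\eps]$; using $g|_{[s,\beta]}\equiv0$ one gets $X_t-Y_t=\int_\beta^t[f(\tau,X_\tau)-f(\tau,Y_\tau)]\,d\tau$ and thus $g(t)\le L\int_\beta^t g(\tau)\,d\tau$ for $t\in[\beta,\beta+\eps]$. Iterating this estimate $n$ times yields $g(t)\le\tfrac{(L\eps)^n}{n!}\sup_{[\beta,\beta+\eps]}g$, which tends to $0$, so $g\equiv0$ on $[\beta,\beta+\eps]$; then $g|_{[s,\beta+\eps]}\equiv0$ contradicts the definition of $\beta$, and therefore $\beta=b$. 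The proof that $\alpha=a$ is the mirror image (integrating from $s$ toward smaller times and using the time-reversed iteration), and together $\alpha=a$, $\beta=b$ give $g\equiv0$ on $[a,b]$.

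I expect the only genuinely delicate part to be the careful bookkeeping of this continuation argument in both time directions, together with the observation --- which is exactly why the Lipschitz hypothesis is phrased with ``$\forall\,x\in V$'' and an $\inf_{r\in(0,\infty)}$ --- that one is only ever handed a local Lipschitz bound near a single point, so it must be re-extracted at the moving base point $X_\beta$ (respectively $X_\alpha$) rather than used once and for all. The remaining ingredients (the Gr\"onwall iteration, continuity of $X$, $Y$, $f$, and order-completeness of $\R$ behind the $\sup/\inf$) are routine.
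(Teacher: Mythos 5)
Your proposal is correct and follows essentially the same route as the paper: a two-sided continuation argument in which, at the current meeting point, one extracts a local Lipschitz constant from the pointwise hypothesis, restricts to a short time interval on which both solutions stay in the corresponding ball, and concludes local agreement, which contradicts maximality/minimality of $\beta$/$\alpha$. The only cosmetic differences are that the paper closes the local step with a $\tfrac12$-contraction on an interval of length at most $\nicefrac{1}{(1+2L)}$ instead of your Picard--Gr\"onwall iteration $\tfrac{(L\varepsilon)^n}{n!}\to 0$, and that it defines $\alpha,\beta$ via the sup/inf of the disagreement set rather than of the agreement interval.
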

\begin{proof}[Proof of Lemma~\ref{lemma:uniqueness}]
	Throughout this proof let
	$ L_{x,r} \in [0,\infty] $, 
	$ x \in V $,
	$ r \in (0, \infty) $,
	be the extended real numbers which satisfy for all
	$ r \in (0, \infty) $,
	$ x \in V $ 
	that
	\begin{equation}
	L_{x,r} =
	\sup\nolimits_{ \tau \in [a, b] } 
		\sup\nolimits_{  y \in V, \| x - y \|_V \leq r }
		\sup\nolimits_{ 
			z \in V \backslash \{ y \}, 
			\| x - z \|_V \leq r  }  
	\tfrac{
		\| f(\tau, y) - f( \tau, z ) \|_V
	}{
		\| y - z \|_V
	} 
,
	\end{equation}
	let
	$ \alpha = \sup ( \{ a \} \cup \{ u \in [a,s] \colon X_u \neq Y_u \} ) $,
	and let
	$ \beta = \inf ( \{ b \} \cup \{ u \in [s,b] \colon X_u \neq Y_u \} ) $.
	Observe that 
	the hypothesis that
	$ X \colon [a,b] \to V $
	and
	$ Y \colon [a,b] \to V $
	are continuous functions ensures that
	there exists 
	a function
	$ \delta \colon [a, b] \times (0, \infty) \to (0, \infty) $
	such that for all
	$ u \in [a, b] $,
	$ \varepsilon \in (0,\infty) $, 
	$ t \in [ u - \delta_{u,\varepsilon}, u + \delta_{u,\varepsilon} ] \cap [a,b] $
	it holds that
	\begin{equation} 
	\| X_t - X_u \|_V < \varepsilon 
	\qquad \text{and} \qquad
	\| Y_t - Y_u \|_V < \varepsilon 
	.
	\end{equation} 
	This implies that for all
	$ u \in [a, b] $
	with 
	$ X_u = Y_u $ 
	there exists 
	$ \varepsilon \in (0, \infty) $
	with
	$ L_{X_u, \varepsilon } < \infty $
	such that for all
	$ t \in 
	[ u - \min \{ \delta_{u, \varepsilon}, 
	\nicefrac{1}{ ( 1 + 2 L_{X_u, \varepsilon} ) } \}, 
	u + \min \{ \delta_{u, \varepsilon}, 
	\nicefrac{1}{ ( 1 + 2 L_{ X_u, \varepsilon } ) }  \} ] 
	\cap [a,b] $ 
	it holds that
	\begin{equation}
	\begin{split}
	&
	\| X_t - Y_t \|_V
	=
	\| ( X_t - Y_t ) - (X_u - Y_u) \|_V
	\\
	&
	=
	\Big\|
	\Big[ 
	\int_s^t
	f( \tau, X_\tau ) \, d\tau 
	-
	\int_s^t 
	f(\tau, Y_\tau ) \, d \tau
	\Big] 
	-
	\Big[
	\int_s^u f( \tau, X_\tau ) \, d \tau 
	-
	\int_s^u f( \tau, Y_\tau ) \, d \tau 
	\Big]
	\Big\|_V
	\\
	&
	=
	\Big\| 
	\int_u^t f( \tau, X_\tau ) \, d \tau 
	-
	\int_u^t f( \tau, Y_\tau ) \, d \tau 
	\Big\|_V
	\\
	&
	=
	\Big\|
	\int_{ \min \{ u, t \} }^{ \max \{ u, t \} } 
	f( \tau, X_{\tau} ) \, d \tau 
	-
	\int_{ \min \{ u, t \} }^{ \max \{ u, t \} } 
	f( \tau, Y_{\tau} ) \, d \tau
	\Big\|_V
	\\
	&
	\leq
	\int_{ \min \{ u, t \} }^{ \max \{ u, t \} }
	\| 
	f( \tau, X_\tau) 
	- 
	f( \tau, Y_\tau ) 
	\|_V 
	\, d \tau
	\leq
	L_{X_u, \varepsilon}
	\int_{ \min \{ u, t \} }^{ \max \{ u, t \} }
	\| X_\tau - Y_\tau \|_V \, d \tau
	\\
	&
	\leq
	L_{X_u, \varepsilon}
	\,
	| t - u |
	\bigg[ 
	\sup_{ \tau \in [ \min \{ u,t \}, 
	\max \{ u, t \} ] }
	\| X_\tau - Y_\tau \|_V
	\bigg]
	\\
	&
	\leq
	L_{X_u, \varepsilon}
	\,
| t - u |
\bigg[ 
	\sup_{ \tau \in [ u - \min \{ \delta_{u, \varepsilon}, 
		\nicefrac{1}{ ( 1 + 2 L_{X_u, \varepsilon} ) } \}, 
		u + \min \{ \delta_{u, \varepsilon}, 
		\nicefrac{1}{ ( 1 + 2 L_{ X_u, \varepsilon } ) }  \} ] 
\cap [a,b]	
}
	\| X_\tau - Y_\tau \|_V
	\bigg]
	\\
	&
	\leq 
	\frac{ L_{X_u, \varepsilon} }{ 1 + 2 L_{X_u, \varepsilon} }
	\bigg[ 
	\sup_{ \tau \in [ u - \min \{ \delta_{u, \varepsilon}, 
		\nicefrac{1}{ ( 1 + 2 L_{X_u, \varepsilon} ) } \}, 
		u + \min \{ \delta_{u, \varepsilon}, 
		\nicefrac{1}{ ( 1 + 2 L_{ X_u, \varepsilon } ) }  \} ] 
\cap [a,b]	
}
	\| X_\tau - Y_\tau \|_V
	\bigg]
	\\
	&
	\leq
	\frac{1}{2}
	\bigg[ 
	\sup_{ \tau \in [ u - \min \{ \delta_{u, \varepsilon}, 
		\nicefrac{1}{ ( 1 + 2 L_{X_u, \varepsilon} ) } \}, 
		u + \min \{ \delta_{u, \varepsilon}, 
		\nicefrac{1}{ ( 1 + 2 L_{ X_u, \varepsilon } ) }  \} ]
\cap [a,b]	
 }
	\| X_\tau - Y_\tau \|_V
	\bigg] 
	.
	\end{split}
	\end{equation}
	Hence, we obtain that for all $ u \in [a,b] $ 
	with $ X_u = Y_u $ 
	there exists $ \varepsilon \in (0, \infty) $
	with
	$ L_{ X_u, \varepsilon } < \infty $
	such that
	\begin{equation}
	\begin{split} 
	&
		\bigg[ 
	\sup_{ \tau \in [ u - \min \{ \delta_{u, \varepsilon}, 
		\nicefrac{1}{ ( 1 + 2 L_{X_u, \varepsilon} ) } \}, 
		u + \min \{ \delta_{u, \varepsilon}, 
		\nicefrac{1}{ ( 1 + 2 L_{ X_u, \varepsilon } ) }  \} ] 
\cap [a,b]	
}
	\| X_\tau - Y_\tau \|_V
	\bigg] 
	\\
	&
	\leq
	\frac{1}{2}
	\bigg[ 
	\sup_{ \tau \in [ u - \min \{ \delta_{u, \varepsilon}, 
		\nicefrac{1}{ ( 1 + 2 L_{X_u, \varepsilon} ) } \}, 
		u + \min \{ \delta_{u, \varepsilon}, 
		\nicefrac{1}{ ( 1 + 2 L_{ X_u, \varepsilon } ) }  \} ] 
\cap [a,b]	
}
	\| X_\tau - Y_\tau \|_V
	\bigg] 
	.
	\end{split} 
	\end{equation}
	This shows that
	for all $ u \in [a,b] $ 
	with $ X_u = Y_u $ 
	there exists $ \varepsilon \in (0, \infty) $
	with
	$ L_{ X_u, \varepsilon } < \infty $
	such that
	\begin{equation}
	\begin{split} 
	\bigg[ 
	\sup_{ \tau \in [ u - \min \{ \delta_{u, \varepsilon}, 
		\nicefrac{1}{ ( 1 + 2 L_{X_u, \varepsilon} ) } \}, 
		u + \min \{ \delta_{u, \varepsilon}, 
		\nicefrac{1}{ ( 1 + 2 L_{ X_u, \varepsilon } ) }  \} ] 
\cap [a,b]	
}
	\| X_\tau - Y_\tau \|_V
	\bigg] 
	=
	0.
	\end{split} 
	\end{equation}
	Therefore, we obtain for all
	$ u \in [0,T] $
	with 
	$ X_u = Y_u $ 
	that
	there exists $ \Delta \in (0, \infty) $
	such that for all
	$ t \in [ u - \Delta, u + \Delta ] \cap [a,b] $ 
	it holds that
%
	\begin{equation} 
	\label{eq:LocUnique}
	X_t = Y_t 
	.
	\end{equation}
	Moreover, 
	observe that
	the fact that 
	$ X $ and $ Y $ are continuous ensures that
	$ X_\alpha = Y_\alpha $
	and 
	$ X_\beta = Y_\beta $.
	Combining this with~\eqref{eq:LocUnique}
	demonstrates
	that
	$ \alpha = a $
	and
	$ \beta = b $.
	The proof of Lemma~\ref{lemma:uniqueness}
	is thus completed.
\end{proof} 
\begin{lemma}
	\label{lemma:Existence} 
	Let
	$ (V, \left\| \cdot \right\|_V) $ 
	be a nontrivial $ \R $-Banach space and
	let 
	$ R, h, \varepsilon \in (0,\infty) $, 
	$ s_0 \in \R $, 
	$ L, M, \delta \in [0,\infty) $,   
	$ x_0 \in V $, 
	$ f \in \mathcal{C} ( \R \times V, V) $ 
	satisfy 
	for 
	all 
	$ x \in V $   
	that  
	\begin{equation} 
	\inf\nolimits_{ r \in (0, \infty) } 
	\sup\nolimits_{ \tau \in [ s_0 - h, s_0 + h ] } 
	\sup\nolimits_{  y \in V, \| x - y \|_V \leq r }
	\sup\nolimits_{
			z \in V \backslash \{ y \}, 
			\| x - z \|_V \leq r  }  
	\tfrac{
		\| f(\tau, y) - f( \tau, z ) \|_V
	}{
		\| y - z \|_V
	}
	< \infty 
	,
	\end{equation} 
	\begin{equation} 
	\label{eq:Define_L}
	 L = \sup\nolimits_{ \tau \in [s_0 - h, s_0 + h] } 
	 \sup\nolimits_{  y \in V,  \| x_0 - y \|_V \leq R + \varepsilon }
	 \sup\nolimits_{ z \in V \backslash \{ y \},
			\| x_0 - z \|_V \leq R + \varepsilon  } 
	\tfrac{
		\| f(\tau, y) - f( \tau, z ) \|_V
	}{
		\| y - z \|_V
	},  
	\end{equation}
	\begin{equation} M = 
	\sup\nolimits_{ \tau \in [s_0-h, s_0+h] } 
	\sup\nolimits_{
		y  \in V, \| x_0 - y \|_V \leq R + \varepsilon  
	} 
	\| f(\tau, y) \|_V 
	,
	\end{equation}
	and
	$ \delta = 
	\min \{ 
	\nicefrac{ \varepsilon }{ ( 2 M + 1 ) }, 
	\nicefrac{ 1 }{ ( 4 L + 1 ) },
	h 
	\} $.
	Then 
	\begin{enumerate}[(i)]
		\item \label{item:existence} 
	it holds for every
	$ s \in [s_0 - \delta,
	s_0 + \delta ] $,
	$ x \in 
	\{ v \in V \colon \| x_0 - v \|_V \leq R \} $
	that
	there exists a unique continuous function 
	$ X_{ s, (\cdot) }^x
	=
	( X_{s,t}^x )_{ t \in [s_0-\delta, s_0+\delta]}
	 \colon 
	[s_0 - \delta, s_0 + \delta ]
	\to 
	V $
	such that
	for all
	$ t\in 
	[s_0 - \delta, s_0 + \delta ] $
	it holds that
	\begin{equation}
	\label{eq:some_equation}
	X_{s,t}^x
	= 
	x
	+
	\int_s^t f( \tau, X_{s,\tau}^x) 
	\, d \tau 
	\end{equation}
	and
	\item \label{item:Bound}
	it
	holds  
	that
	\begin{equation}
	\sup\nolimits_{ s, t \in [s_0- \delta, s_0 + \delta ] }
	\sup\nolimits_{
	x \in V, \| x_0 - x \|_V \leq R 	
	}
	\| X_{s,t}^x - x_0 \|_V \leq R + \varepsilon 
	.
	\end{equation}
\end{enumerate}
\end{lemma}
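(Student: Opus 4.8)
The plan is to establish existence by applying the Banach fixed point theorem to a Picard-type integral operator on a closed ball of continuous functions, and then to upgrade uniqueness from ``within that ball'' to ``among all continuous solutions'' by invoking Lemma~\ref{lemma:uniqueness}. Fix $ s \in [s_0-\delta, s_0+\delta] $ and $ x \in V $ with $ \| x_0 - x \|_V \leq R $, and note that $ \delta \in (0,\infty) $ and, because $ \delta \leq h $, that $ [s_0-\delta, s_0+\delta] \subseteq [s_0-h, s_0+h] $, so the constants $ L $ and $ M $ control all values of $ f $ that will occur. Let $ \mathcal{M} $ denote the set of all $ X \in \mathcal{C}([s_0-\delta, s_0+\delta], V) $ with $ \sup_{t \in [s_0-\delta,s_0+\delta]} \| X_t - x_0 \|_V \leq R + \varepsilon $; equipped with the metric induced by the supremum norm, $ \mathcal{M} $ is a nonempty (it contains the constant function $ x_0 $) closed subset of the Banach space of continuous $ V $-valued functions on $ [s_0-\delta,s_0+\delta] $, hence a complete metric space. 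Define $ \Psi \colon \mathcal{M} \to \mathcal{C}([s_0-\delta,s_0+\delta], V) $ by $ (\Psi X)_t = x + \int_s^t f(\tau, X_\tau) \, d\tau $; the integrand $ \tau \mapsto f(\tau, X_\tau) $ is continuous, so $ \Psi X $ is well defined and continuous (in fact $ \mathcal{C}^1 $ by Lemma~\ref{lemma:extensions}).

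Next I would verify the two hypotheses of the contraction mapping theorem. For $ X \in \mathcal{M} $ and $ t \in [s_0-\delta,s_0+\delta] $ the bound $ \| (\Psi X)_t - x_0 \|_V \leq \| x - x_0 \|_V + \bigl| \int_s^t \| f(\tau, X_\tau) \|_V \, d\tau \bigr| \leq R + M |t-s| \leq R + 2 M \delta \leq R + \nicefrac{2 M \varepsilon}{(2M+1)} \leq R + \varepsilon $ (using $ |t-s| \leq 2\delta $ and $ \delta \leq \nicefrac{\varepsilon}{(2M+1)} $) shows $ \Psi(\mathcal{M}) \subseteq \mathcal{M} $. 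For $ X, Y \in \mathcal{M} $ the points $ X_\tau, Y_\tau $ lie in $ \{ v \in V \colon \| x_0 - v \|_V \leq R + \varepsilon \} $, so the definition of $ L $ gives $ \| f(\tau, X_\tau) - f(\tau, Y_\tau) \|_V \leq L \| X_\tau - Y_\tau \|_V $ (trivially also when $ X_\tau = Y_\tau $), whence $ \sup_t \| (\Psi X)_t - (\Psi Y)_t \|_V \leq 2 L \delta \sup_\tau \| X_\tau - Y_\tau \|_V \leq \nicefrac{2L}{(4L+1)} \sup_\tau \| X_\tau - Y_\tau \|_V \leq \tfrac12 \sup_\tau \| X_\tau - Y_\tau \|_V $ (using $ \delta \leq \nicefrac{1}{(4L+1)} $). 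Thus $ \Psi $ is a $ \nicefrac12 $-contraction of the complete metric space $ \mathcal{M} $, and the Banach fixed point theorem yields a unique $ X_{s,(\cdot)}^x \in \mathcal{M} $ with $ X_{s,t}^x = x + \int_s^t f(\tau, X_{s,\tau}^x) \, d\tau $ for all $ t \in [s_0-\delta,s_0+\delta] $; since this fixed point already lies in $ \mathcal{M} $, taking the supremum over $ s $, $ t $, and $ x $ establishes part~\eqref{item:Bound}.

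It remains to promote uniqueness from ``within $ \mathcal{M} $'' to ``among all continuous solutions''. If $ X, \tilde X \in \mathcal{C}([s_0-\delta,s_0+\delta], V) $ both satisfy the integral equation with this $ s $ and $ x $, then $ X_t - \int_s^t f(\tau, X_\tau) \, d\tau = x = \tilde X_t - \int_s^t f(\tau, \tilde X_\tau) \, d\tau $ for all $ t $, and, since the local Lipschitz hypothesis on $ f $ over $ [s_0-h, s_0+h] $ restricts to the analogous hypothesis over the smaller interval $ [s_0-\delta, s_0+\delta] $, Lemma~\ref{lemma:uniqueness} (applied with $ a = s_0-\delta $, $ b = s_0+\delta $, and the given $ s $) forces $ X = \tilde X $. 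Combining this with the fixed point construction proves part~\eqref{item:existence}. The argument is essentially the classical Picard--Lindel\"of scheme; the only points needing care are the arithmetic with the constants — the fixed Lipschitz constant $ L $ on the ball of radius $ R+\varepsilon $ is what makes $ \Psi $ a strict contraction, the $ M $-bound keeps $ \Psi $ inside that ball, and this is exactly why $ \delta $ is chosen as $ \min\{ \nicefrac{\varepsilon}{(2M+1)}, \nicefrac{1}{(4L+1)}, h \} $ — together with the observation that uniqueness among \emph{all} continuous functions requires the separate input of Lemma~\ref{lemma:uniqueness} rather than the fixed point uniqueness alone.
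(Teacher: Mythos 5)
Your proposal is correct and follows essentially the same route as the paper: the same closed ball of continuous functions bounded by $R+\varepsilon$, the same Picard operator, the same arithmetic with $\delta$ to get invariance (via the $M$-bound) and a $\nicefrac{1}{2}$-contraction (via the $L$-bound), Banach's fixed point theorem for existence and the bound in part~(ii), and Lemma~\ref{lemma:uniqueness} to upgrade uniqueness from the ball to all continuous solutions. No gaps.
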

\begin{proof}[Proof of Lemma~\ref{lemma:Existence}]
	Throughout this proof 
	let
	$ \mathcal{A} $ be the set given by
	\begin{equation}
	\label{eq:Define_A}
	\mathcal{A} 
	= 
	\big\{ \psi \in 
	\mathcal{C}( [s_0 - \delta,
	 s_0 + \delta ], V ) \colon
	 \sup\nolimits_{ t \in [s_0 - \delta,
	 	s_0 + \delta ] }
	 \| \psi(t) - x_0 \|_V \leq R + \varepsilon 
	\big \}
	.
	\end{equation}
	Note that for all
	$ s, t \in 
	[s_0 - \delta,
	s_0 + \delta ] $,
	$ x \in
	\{ v \in V \colon \| x_0 - v \|_V \leq 
	R \} $, 
	$ \psi \in \mathcal{A} $
	it holds that
	\begin{equation}
	\begin{split}
	&
	\Big\|
	x + \int_s^t f( \tau, \psi( \tau)  ) \, d \tau - x_0
	\Big\|_V
	\leq
	\| x - x_0 \|_V
	+
	\int_{ \min \{ s, t \} }^{ \max \{s, t\} }
	\| f( \tau, \psi(\tau) ) \|_V
	\, d \tau 
	\\
	&
	\leq
	R
	+
	M | t -s |
	\leq
	R
	+
	2 M \delta 
	\leq
	R + ( 2 M + 1 ) \delta
	\leq 
    R +	\varepsilon.
	\end{split}
	\end{equation}
	This ensures that there exist functions
	$ B_{s,x} \colon 
	\mathcal{A} \to \mathcal{A} $,
	$ s \in [s_0 - \delta,
	s_0 + \delta ] $,
	$ x \in \{ v \in V \colon \| x_0 - v \|_V \leq R \} $,
	such that for all
	$ s, t \in [s_0 - \delta,
	s_0 + \delta ] $,
	$ x \in \{ v \in V \colon \| x_0 - v \|_V \leq R \} $, 
	$ \psi \in \mathcal{A} $ 
	it holds that
	\begin{equation}
	( B_{s,x}( \psi ) ) (t) = x 
	+
	\int_s^t
	f( \tau, \psi( \tau) ) \, d \tau.
	\end{equation}
	Next observe that~\eqref{eq:Define_L}
	and~\eqref{eq:Define_A}
	demonstrate	
	that for all
	$ s, t \in [s_0 - \delta,
	s_0 + \delta ] $, 
	$ x \in \{ v \in V \colon \| x_0 - v \|_V \leq R \} $, 
	$ \psi_1, \psi_2 \in \mathcal{A} $
	it holds that
	\begin{equation}
	\begin{split}
	&
	\| 
	( B_{s,x} ( \psi_1 ) ) ( t )
	-
	( B_{s,x}( \psi_2 ) ) ( t )
	\|_V
	\leq
	\Big\|
	\int_s^t
	[ 
	f( \tau, \psi_1( \tau ) )
	-
	f( \tau, \psi_2( \tau) )
	]
	\, d \tau
	\Big\|_V
	\\
	&
	\leq
	\int_{ \min \{ s, t \} }^{ \max \{ s, t \} }
	\|
	f( \tau, \psi_1( \tau ) )
	-
	f( \tau, \psi_2( \tau) )
	\|_V
	\, d \tau
	\leq
	L 
	\int_{ \min \{ s, t \} }^{ \max \{ s, t \} }
	\| \psi_1( \tau ) - \psi_2( \tau ) \|_V
	\, d \tau 
	\\
	&
	\leq
	L  | t - s | 
	\bigg[ 
	\sup_{ \tau \in [s_0 - \delta, s_0 + \delta ] }
	\| \psi_1( \tau ) - \psi_2( \tau ) \|_V
	\bigg]
	\leq 
	2 L \delta 
	\bigg[ 
	\sup_{\tau \in [s_0 - \delta, s_0 + \delta ]}
	\| \psi_1( \tau ) - \psi_2( \tau ) \|_V
	\bigg]
	\\
	&
	\leq 
	\frac{( 4 L + 1 ) \delta}{2}
	\bigg[ 
	\sup_{ \tau \in [s_0 - \delta, s_0 + \delta ] }
	\| \psi_1( \tau ) - \psi_2( \tau ) \|_V
	\bigg]
	\leq 
	\frac{1}{2}
	\bigg[ 
	\sup_{ \tau \in [s_0 - \delta, s_0 + \delta ] }
	\| \psi_1( \tau ) - \psi_2( \tau ) \|_V
	\bigg]
	.
	\end{split}
	\end{equation}
	This shows that for all
	$ s \in [s_0 - \delta, s_0 + \delta ] $,
	$ x \in \{ v \in V \colon \| x_0 - v \|_V \leq R \} $,
	$ \psi_1, \psi_2 \in \mathcal{A} $
	it holds that
	\begin{equation}
	\begin{split}
	\bigg[
	\sup_{ t \in [s_0 - \delta, s_0 + \delta ] }
	\| 
	( B_{s,x}( \psi_1 ) )( t ) 
	-
	( B_{s,x}( \psi_2 ) )( t ) 
	\|_V
	\bigg]
	\leq
	\frac{1}{2}
	\bigg[ 
	\sup_{ t \in [s_0 - \delta, s_0 + \delta ] }
	\| \psi_1( t ) - \psi_2( t ) \|_V
	\bigg]
	.
	\end{split}
	\end{equation}
	Banach's fixed point theorem
	hence proves that
	there exist continuous functions 
	$ X_{s,(\cdot)}^x 
	=
	( X_{s,t}^x )_{ t \in [s_0-\delta, s_0+\delta] }
	\colon 
	[s_0 - \delta,
	s_0 + \delta ]
	\to V $, 
	$ s \in [s_0 - \delta,
	s_0 + \delta ] $,
	$ x \in	\{ v \in V \colon \| x_0 - v \|_V \leq R \} $, 
	such that for all
	$ s,t \in [s_0 - \delta,
	s_0 + \delta ] $,
	$ x \in	\{ v \in V \colon \| x_0 - v \|_V \leq R \} $ 
	it holds that
	\begin{equation}
	X_{s,t}^x = x + \int_s^t f( \tau, X_{s,\tau}^x ) \, d \tau 
	\end{equation}
	and
	\begin{equation}
	\| X_{s,t}^x - x_0 \|_V \leq R + \varepsilon.
	\end{equation}
	Combining this and
	Lemma~\ref{lemma:uniqueness} 
	(with
	$ V = V $,
	$ a = s_0 - \delta $, 
	$ b = s_0 + \delta $,
	$ s = s $, 
	$ f = ( [s_0 - \delta, s_0 + \delta] \times V \ni ( \tau, y ) 
	\mapsto f(\tau, y) \in V ) $,
	$ X = ( [s_0 - \delta, s_0 + \delta] \ni t \mapsto X_{s,t}^x \in V ) $
	for 
	$ s \in [s_0 - \delta, s_0 + \delta ] $,
	$ x \in \{ v \in V \colon \| x_0 - v \|_V \leq R \} $
	in the notation of
	Lemma~\ref{lemma:uniqueness})
	completes the proof of Lemma~\ref{lemma:Existence}.
\end{proof}
\begin{lemma}
	\label{lemma:trivial}
	Let
	$ (V, \left\| \cdot \right\|_V) $ 
	be a nontrivial $ \R $-Banach space
	and 
	let 
	$ a \in \R $,
	$ b \in (a, \infty ) $,  
	$ r \in (0, \infty) $,
	$ x \in V $,
	$ f \in \mathcal{C}( [a, b] \times V, V) $
	satisfy  
	that
	\begin{equation} 
	\label{eq:Assumption}
	\sup\nolimits_{ \tau \in [a, b] } 
	\sup\nolimits_{ y \in V, \| x - y \|_V \leq r }
	\sup\nolimits_{ z \in V \backslash \{ y \}, 
			\| x - z \|_V \leq r } 
	\tfrac{
		\| f(\tau, y) - f( \tau, z ) \|_V
	}{
		\| y - z \|_V
	}
	< \infty 
	.
	\end{equation}
	Then   
	\begin{equation} 
	\sup\nolimits_{ \tau \in [a, b] }
	\sup\nolimits_{ y \in V,  \| x - y \|_V \leq r }
	\| f ( \tau, y ) \|_V <  \infty 
	.
	\end{equation}
\end{lemma}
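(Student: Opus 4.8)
The plan is to control $\|f(\tau,y)\|_V$ for every $y$ in the closed ball $\{v\in V\colon \|x-v\|_V\leq r\}$ by comparing $f(\tau,y)$ with the ``reference value'' $f(\tau,x)$: the difference is estimated via the finite local Lipschitz constant supplied by~\eqref{eq:Assumption}, while $\sup_{\tau\in[a,b]}\|f(\tau,x)\|_V$ is finite because $\tau\mapsto f(\tau,x)$ is continuous on the compact interval $[a,b]$.

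First I would introduce the abbreviation
\[
L = \sup\nolimits_{\tau\in[a,b]}\,\sup\nolimits_{y\in V,\,\|x-y\|_V\leq r}\,\sup\nolimits_{z\in V\backslash\{y\},\,\|x-z\|_V\leq r}\tfrac{\|f(\tau,y)-f(\tau,z)\|_V}{\|y-z\|_V},
\]
which by hypothesis~\eqref{eq:Assumption} satisfies $L\in[0,\infty)$. Since $\|x-x\|_V=0\leq r$, the point $x$ itself belongs to the ball $\{v\in V\colon\|x-v\|_V\leq r\}$, so one may insert $z=x$ in the innermost supremum. This shows that for all $\tau\in[a,b]$ and all $y\in V$ with $\|x-y\|_V\leq r$ and $y\neq x$ it holds that $\|f(\tau,y)-f(\tau,x)\|_V\leq L\|y-x\|_V\leq Lr$, while the bound $\|f(\tau,y)-f(\tau,x)\|_V\leq Lr$ holds trivially when $y=x$. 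The triangle inequality then yields, for all $\tau\in[a,b]$ and all $y\in V$ with $\|x-y\|_V\leq r$, that $\|f(\tau,y)\|_V\leq\|f(\tau,x)\|_V+Lr$.

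Second, the assumption $f\in\mathcal{C}([a,b]\times V,V)$ implies that $[a,b]\ni\tau\mapsto\|f(\tau,x)\|_V\in\R$ is continuous, hence bounded on the compact interval $[a,b]$, so that $\sup_{\tau\in[a,b]}\|f(\tau,x)\|_V<\infty$. Combining this with the estimate from the previous step gives
\[
\sup\nolimits_{\tau\in[a,b]}\,\sup\nolimits_{y\in V,\,\|x-y\|_V\leq r}\|f(\tau,y)\|_V\leq\sup\nolimits_{\tau\in[a,b]}\|f(\tau,x)\|_V+Lr<\infty,
\]
which is the assertion of Lemma~\ref{lemma:trivial}.

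There is no genuine obstacle here; the argument is elementary. The only two points requiring a moment's attention are that $x$ is an admissible choice of $z$ in the supremum defining $L$ (so the degenerate case $y=x$ must be handled separately) and that pointwise continuity in $\tau$ upgrades to a uniform bound by compactness of $[a,b]$.
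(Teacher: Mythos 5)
Your proof is correct and follows essentially the same route as the paper: both decompose $\|f(\tau,y)\|_V$ via the triangle inequality against the reference value $f(\tau,x)$, bound the difference by $r$ times the Lipschitz constant from~\eqref{eq:Assumption}, and use continuity of $\tau\mapsto f(\tau,x)$ on the compact interval $[a,b]$ to bound the remaining term. No issues.
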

\begin{proof}[Proof of Lemma~\ref{lemma:trivial}]
	Note
	that~\eqref{eq:Assumption}
	and the hypothesis that
	$ f \colon [a,b] \times V \to V $
	is a continuous function 
	ensure that 
	\begin{equation}
	\begin{split}
	&
	\sup\nolimits_{ \tau \in [a,b] }
	\sup\nolimits_{ y \in V, \| x - y \|_V \leq r }
	\| f ( \tau, y ) \|_V
	\\
	&
	\leq
	\big[ 
	\sup\nolimits_{ \tau \in [a,b] }
	\sup\nolimits_{ y \in V, \| x - y \|_V \leq r }
	\| f( \tau, y ) - f ( \tau, x ) \|_V
	\big]
	+
	\big[ 
	\sup\nolimits_{ \tau \in [a,b] }
	\| f ( \tau, x ) \|_V
	\big]
	\\
	&
	=
	\big[
	\sup\nolimits_{ \tau \in [a,b] }
	\sup\nolimits_{ y \in V \backslash \{x\},
		\| x - y \|_V \leq r }
	\| f( \tau, y ) - f( \tau, x ) \|_V 
	\big] 
	+
	\big[ 
	\sup\nolimits_{ \tau \in [a,b] }
	\| f ( \tau, x ) \|_V
	\big]
	\\
	&
	\leq 
	r
	\big[
	\sup\nolimits_{ \tau \in [a,b] }
	\sup\nolimits_{ y \in V \backslash \{ x \}, 
		\| x - y \|_V \leq r }
	\big( 
	\tfrac{ \| f( \tau, y ) - f( \tau, x ) \|_V }
	{ \| y - x \|_V }
	\big)
	\big]
	+
	\big[ 
	\sup\nolimits_{ \tau \in [a,b] }
	\| f ( \tau, x ) \|_V
	\big]
	\\
	&
	\leq
	r
	\bigg[ 
	\sup_{ \tau \in [a,b] }
	\sup_{   z \in V, \| x - z \|_V \leq r }
	\sup_{ 
			y \in V \backslash \{ z \},
			\| x - y \|_V \leq r 
		 } 
	\big(
	\tfrac{
		\| f(\tau, y) - f( \tau, z ) \|_V
	}{
		\| y - z \|_V
	} 
	\big)
	\bigg]
	+
	\big[ 
	\sup\nolimits_{ \tau \in [a,b] }
	\| f ( \tau, x ) \|_V
	\big] 
	< \infty
	.
	\end{split}
	\end{equation}
	The proof of Lemma~\ref{lemma:trivial}
	is thus completed.
\end{proof}
\begin{corollary}
	\label{corollary:Extension} 
	Let
	$ (V, \left\| \cdot \right\|_V) $ 
	be a nontrivial $ \R $-Banach space,  
	let
	$ T, R, \varepsilon \in (0,\infty) $,  
	$ s_0 \in [0,T] $,
	$ x_0 \in V $,
	$ f \in \mathcal{C} ( [0,T] \times V, V) $,  
	for every
	$ x \in V $,
	$ s \in [0,T] $
	let
	$ X_{s, (\cdot)}^x 
	=
	( X_{s,t}^x )_{ t \in [s,T] } 
	\colon 
	[s,T] \to V $
	be a continuous function
	which satisfies 
	for all
	$ t \in [s,T] $ that
$ X_{s, t}^x = 
x
+
\int_s^t f( \tau, X_{s, \tau}^x) \,d\tau $,
and assume for all
$ x \in V $ 
that
	$ \sup_{ \tau \in [0,T] } 
	\sup_{ y \in V, \| x_0 - y \|_V \leq R+\varepsilon } 
	\sup_{ z \in V \backslash \{ y \},
			\| x_0 - z \|_V \leq R+\varepsilon }  
	\tfrac{
		\| f(\tau, y) - f( \tau, z ) \|_V
	}{
		\| y - z \|_V
	}
	< \infty $
	and
	$
	\inf_{ r \in (0, \infty) } 
	\sup_{ \tau \in [0, T] } 
	\sup_{  y \in V, \| x - y \|_V \leq r }
	\sup_{ 
			z \in V \backslash \{ y \}, 
			\| x - z \|_V \leq r  }   
	\tfrac{
		\| f(\tau, y) - f( \tau, z ) \|_V
	}{
		\| y - z \|_V
	}
	< \infty 
$.
	Then 
	\begin{enumerate}[(i)]
		\item \label{item:extension}
	there exists 
	$ \delta \in (0, \infty) $
	such that for every
	$ s \in [s_0 - \delta, s_0 + \delta ] \cap [0,T] $,
	$ x \in \{ v \in V \colon \| x_0 - v \|_V \leq R \} $ 
	there exists a unique continuous function
	$ Y_{s,(\cdot)}^x
	=
	(Y_{s,t}^x)_{ t \in  [ s_0 - \delta, s_0 + \delta ] \cap [0,T] }
	\colon
	 [ s_0 - \delta, s_0 + \delta ] \cap [0,T]
	 \to 
	 V $ 
	such that for all 
	$ t \in [s_0 - \delta, s_0 + \delta ] \cap [0,T] $
	it holds that  
	\begin{equation}
    Y_{s,t}^x 
	=
	x
	+
	\int_s^t f( \tau, Y_{s, \tau}^x ) \, d \tau
	, 
	\end{equation}
	\item 
	\label{item:uniqueness}
	for all
	$ s, t \in [ s_0 - \delta, s_0 + \delta ] \cap [0,T] $,
	$ x \in \{ v \in V \colon \| x_0 - v \|_V \leq R \} $
	with
	$ s \leq t $
	it holds that
	$ X_{s,t}^x = Y_{s,t}^ x $,
	and
	\item\label{item:unif_bound} 
	for all 
	$ s \in [s_0 - \delta, s_0 + \delta ] \cap [0,T] $,
	$ x \in \{ v \in V \colon \| x_0 - v \|_V \leq R \} $ 
	it holds that
	\begin{equation}
	\sup\nolimits_{ t \in [s_0- \delta, s_0 + \delta ] }
	\| Y_{s,t}^x - x_0 \|_V \leq R + \varepsilon 
	.
	\end{equation}
\end{enumerate}
\end{corollary}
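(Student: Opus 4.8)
The plan is to produce the functions $Y^x_{s,(\cdot)}$ by invoking the local existence and uniqueness result Lemma~\ref{lemma:Existence} after extending $f$ continuously to all of $\R\times V$, and to reduce the two uniqueness assertions to Lemma~\ref{lemma:uniqueness}. To that end I would first fix an arbitrary $h\in(0,\infty)$, let $\mathfrak{c}\colon\R\to[0,T]$ be the continuous clipping map given by $\mathfrak{c}(\tau)=\max\{0,\min\{T,\tau\}\}$, and define $g\colon\R\times V\to V$ by $g(\tau,y)=f(\mathfrak{c}(\tau),y)$, so that $g\in\mathcal{C}(\R\times V,V)$ and $g|_{[0,T]\times V}=f$. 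Since $\mathfrak{c}([s_0-h,s_0+h])\subseteq[0,T]$, every supremum over $\tau\in[s_0-h,s_0+h]$ of an expression built from $g(\tau,\cdot)$ is dominated by the corresponding supremum over $\tau\in[0,T]$ built from $f(\tau,\cdot)$. Hence the assumed local Lipschitz condition (the $\inf_r\sup_{\tau\in[0,T]}\cdots$ hypothesis) transfers to $g$ on $[s_0-h,s_0+h]$, and the constant $L$ defined by \eqref{eq:Define_L} for $g$ (on $[s_0-h,s_0+h]$, centred at $x_0$, radius $R+\varepsilon$) is finite thanks to the assumed $x_0$-centred sup-Lipschitz condition with radius $R+\varepsilon$. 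To bound the constant $M=\sup_{\tau\in[s_0-h,s_0+h]}\sup_{y\in V,\,\|x_0-y\|_V\le R+\varepsilon}\|g(\tau,y)\|_V$ I would use $M\le\sup_{\tau\in[0,T]}\sup_{y\in V,\,\|x_0-y\|_V\le R+\varepsilon}\|f(\tau,y)\|_V$, the right-hand side being finite by Lemma~\ref{lemma:trivial} (applied with $a=0$, $b=T$, $r=R+\varepsilon$, $x=x_0$, $f=f$), whose hypothesis is exactly the assumed sup-Lipschitz condition. With $L,M<\infty$ in hand I set $\delta=\min\{\varepsilon/(2M+1),\,1/(4L+1),\,h\}\in(0,\infty)$.

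Applying Lemma~\ref{lemma:Existence} with $f=g$, with $s_0,h,R,\varepsilon,x_0$ as in the corollary, and with $L,M,\delta$ as above then yields, for every $s\in[s_0-\delta,s_0+\delta]$ and every $x$ in the closed ball of radius $R$ about $x_0$, a unique continuous function on $[s_0-\delta,s_0+\delta]$ solving the integral equation driven by $g$, together with the uniform bound of item~\eqref{item:Bound} of that lemma. Restricting these functions to $[s_0-\delta,s_0+\delta]\cap[0,T]$ defines the candidates $Y^x_{s,(\cdot)}$; for $s,t$ in this intersection the integration variable ranges over a subinterval of $[0,T]$ on which $g$ and $f$ coincide, so the restrictions satisfy the integral equation with $f$, which gives existence in item~\eqref{item:extension}, and item~\eqref{item:unif_bound} is then immediate from item~\eqref{item:Bound} of Lemma~\ref{lemma:Existence}. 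Uniqueness in item~\eqref{item:extension} follows from Lemma~\ref{lemma:uniqueness} applied on $[s_0-\delta,s_0+\delta]\cap[0,T]$: if $Z$ is another continuous solution there, then $Z$ and $Y^x_{s,(\cdot)}$ both satisfy $Z_t-\int_s^t f(\tau,Z_\tau)\,d\tau=x=Y^x_{s,t}-\int_s^t f(\tau,Y^x_{s,\tau})\,d\tau$ for all $t$ in this subinterval, and the Lipschitz hypothesis of Lemma~\ref{lemma:uniqueness} on it is inherited from the assumed $\inf_r\sup_{\tau\in[0,T]}\cdots$ condition, so $Z=Y^x_{s,(\cdot)}$.

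For item~\eqref{item:uniqueness}, fix $x$ in the closed ball of radius $R$ about $x_0$ and $s\le t$ in $[s_0-\delta,s_0+\delta]\cap[0,T]$; then $[s,t]\ni r\mapsto X^x_{s,r}$ (the function from the corollary's hypotheses, restricted to $[s,t]\subseteq[s,T]$) and $[s,t]\ni r\mapsto Y^x_{s,r}$ are continuous and satisfy for all $r\in[s,t]$ that $X^x_{s,r}=x+\int_s^r f(\tau,X^x_{s,\tau})\,d\tau$ and $Y^x_{s,r}=x+\int_s^r f(\tau,Y^x_{s,\tau})\,d\tau$, so Lemma~\ref{lemma:uniqueness} (with $a=s$, $b=t$, $s=s$) gives $X^x_{s,r}=Y^x_{s,r}$ for all $r\in[s,t]$, in particular at $r=t$. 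I do not anticipate any genuine obstacle: the one place requiring care is the bookkeeping of which supremum is taken over which set when transporting $L$ and $M$ from $[0,T]$ to $[s_0-h,s_0+h]$ via the extension $g$, together with keeping the symbol $X$ used internally in Lemma~\ref{lemma:Existence} distinct from the functions $X^x_{s,(\cdot)}$ appearing in the corollary's hypotheses.
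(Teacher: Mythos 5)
Your proposal is correct and follows essentially the same route as the paper: extend $f$ to $\R\times V$ by clipping the time variable, verify $L,M<\infty$ via the hypotheses and Lemma~\ref{lemma:trivial}, choose $\delta=\min\{\nicefrac{\varepsilon}{(2M+1)},\nicefrac{1}{(4L+1)},h\}$, invoke Lemma~\ref{lemma:Existence} for existence, the uniform bound, and item~\eqref{item:unif_bound}, and reduce both uniqueness assertions (including item~\eqref{item:uniqueness}) to Lemma~\ref{lemma:uniqueness}. The only cosmetic difference is that the paper fixes $h=1$ rather than keeping $h$ arbitrary.
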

\begin{proof}[Proof of Corollary~\ref{corollary:Extension}]
	Throughout this proof let  
	$ F \in \mathcal{C}(\R \times V, V) $
	be the function
	which satisfies for all 
	$ t \in \R $, 
	$ x \in V $ 
	that 
\begin{equation} 
 F(t,x) = f( \min \{ T, \max \{ 0, t \} \}, x ),
\end{equation}
	let
		$ L \in [0, \infty) $
	be the real number 
	given by
	\begin{equation}
	L =
	\sup\nolimits_{ \tau \in [0, T] }
	 \sup\nolimits_{ 
		 y \in V, \| x_0 - y \|_V \leq R + \varepsilon }
	 \sup\nolimits_{
			z \in V \backslash \{ y \},
			\| x_0 - z \|_V \leq R + \varepsilon }   
	\tfrac{
		\| F(\tau, y) - F( \tau, z ) \|_V
	}{
		\| y - z \|_V
	} 
	,
	\end{equation}
	let
	$ M \in [0, \infty] $
	be the extended real number given by
	\begin{equation}
	M =
	\sup\nolimits_{  
		\tau \in [0,T]}
	\sup\nolimits_{
		y \in V, \| x_0 - y \|_V \leq R + \varepsilon  
	}
	\| F(\tau, y) \|_V
	,
	\end{equation}
	and let
	$ \delta \in [0, \infty) $
	be the real number
	given by
	  $ \delta = 
	  \min \{ 
	  \nicefrac{ \varepsilon }{ ( 2 M + 1 ) }, 
	  \nicefrac{ 1 }{ ( 4 L + 1 ) },
	  1 
	  \} $.
	Note that
Lemma~\ref{lemma:trivial}
(with
$ V = V $,
$ a = 0 $,
$ b = T $,
$ r = R + \varepsilon $,
$ x = x_0 $,
$ f = F $
in the notation of Lemma~\ref{lemma:trivial})
proves that 
$ M < \infty $.
This ensures that 
$ \delta \in (0,\infty) $.
	Combining this, 
	the fact that $ M < \infty $, 
	and
	item~\eqref{item:existence} 
	of
	Lemma~\ref{lemma:Existence}
	(with
	$ V = V $, 
	$ R = R $,
	$ h = 1 $,
	$ \varepsilon = \varepsilon $,
	$ s_0 = s_0 $,
	$ L = L $,
	$ M = M $,
	$ \delta = \delta $, 
	$ x_0 = x_0 $, 
	$ f = F $ 
	in the notation of
	item~\eqref{item:existence} 
	of
	Lemma~\ref{lemma:Existence})
	establishes item~\eqref{item:extension}.
	The fact that 
	$ \forall \, 
	s \in [s_0- \delta, s_0 + \delta] \cap [0,T],  
	x \in \{ v \in V \colon \| x_0 - v \|_V \leq R \}  
	\colon 
	Y_{s,s}^x = X_{s,s}^x $
	and 
	Lemma~\ref{lemma:uniqueness}
	(with
	$ V = V $,
	$ a = s $,
	$ b = \min \{s_0 + \delta, T \} $, 
	$ f = ( [s, \min \{s_0 + \delta, T \}] \times V \ni (t,y) \mapsto F(t,y) \in V ) $,
	$ X = ( [s, \min \{s_0 + \delta, T \} ] \ni t \mapsto X_{s,t}^x \in V ) $,
	$ Y = ( [s, \min \{s_0 + \delta, T \} ] \ni t \mapsto Y_{s,t}^x \in V ) $
	for 
	$ s \in [ s_0 - \delta, s_0 + \delta ] \cap [0,T] $,
	$ x \in \{ v \in V \colon \| x_0 - v \|_V \leq R \}   $
	in the notation of Lemma~\ref{lemma:uniqueness})
	hence 
	show that item~\eqref{item:uniqueness}
	holds.
	In addition,  
	item~\eqref{item:Bound} 
	of
	Lemma~\ref{lemma:Existence}
	(with
	$ V = V $, 
	$ R = R $,
	$ h = 1 $,
	$ \varepsilon = \varepsilon $,
	$ s_0 = s_0 $,
	$ L = L $,
	$ M = M $, 
	$ \delta = \delta $, 
	$ x_0 = x_0 $, 
	$ f = F $ 
	in the notation of
	item~\eqref{item:Bound} 
	of
	Lemma~\ref{lemma:Existence})
	establishes item~\eqref{item:unif_bound}. 
	This completes the proof of Corollary~\ref{corollary:Extension}.
\end{proof}
\begin{lemma}
	\label{lemma:UnifCont}
	Let
	$ (V, \left\| \cdot \right\|_V) $ 
	be a nontrivial $ \R $-Banach space,  
	let
	$ T, R \in (0,\infty) $,  
	$ s_0 \in [0,T] $,
	$ x_0 \in V $,
	$ f \in \mathcal{C} ( [0,T] \times V, V) $,  
	for every
	$ x \in V $,
	$ s \in [0,T] $
	let
	$ X_{s,(\cdot)}^x 
	=
	(X_{s,t}^x)_{ t \in [s,T] }
	\colon [s,T]
	\to V $
	be a continuous function 
which satisfies for all 
	$ t \in [s,T] $  
	that 
	$ X_{s, t}^x = 
	x
	+
	\int_s^t f( \tau, X_{s, \tau}^x) \,d\tau $,
	assume 
	that
	$ 
	\inf_{ r \in (0,\infty) }
	\sup_{ \tau \in [0,T] } 
	\sup_{ y \in V, \| x_0 - y \|_V \leq R + r }
	\sup_{ 	z \in V \backslash \{ y \}, 
			\| x_0 - z \|_V \leq R + r }  
	\tfrac{
		\| f(\tau, y) - f( \tau, z ) \|_V
	}{
		\| y - z \|_V
	}
	< \infty $,
    and
    assume for all
    $ x \in V $ that
	$ 
	\inf_{ r \in (0, \infty) } 
	\sup_{ \tau \in [0, T] } 
	\sup_{  y \in V, \| x - y \|_V \leq r  }
	\sup_{	z \in V \backslash \{ y \}, 
			\| x - z \|_V \leq r  }   
	\tfrac{
		\| f(\tau, y) - f( \tau, z ) \|_V
	}{
		\| y - z \|_V
	}
	< \infty 
	$.
	Then there exists 
	$ \delta \in (0, \infty) $
	such that 
	\begin{equation}  
	\label{eq:LemmaUnifCont}
\{
(u,v) \in  
( [ s_0 - \delta, s_0 + \delta ] \cap [0,T] )^2
\colon 
u \leq v
\}
	\times \{ v \in V \colon 
	\| x_0 - v \|_V \leq R \}
	\ni
	(s,t,x)
	\mapsto
	X_{s,t}^x \in V 
	\end{equation}
	is uniformly continuous.
\end{lemma}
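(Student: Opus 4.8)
The plan is to reduce the statement to an explicit (Gr\"onwall-type) Lipschitz estimate for the solutions, carried out on a two-sided time window around $s_0$ furnished by Corollary~\ref{corollary:Extension}. First I would use the first local Lipschitz hypothesis to pick $\varepsilon \in (0,\infty)$ such that
\begin{equation}
L := \sup\nolimits_{\tau \in [0,T]}\,\sup\nolimits_{y \in V,\,\|x_0-y\|_V \le R+\varepsilon}\,\sup\nolimits_{z \in V\backslash\{y\},\,\|x_0-z\|_V \le R+\varepsilon}\tfrac{\|f(\tau,y)-f(\tau,z)\|_V}{\|y-z\|_V} < \infty,
\end{equation}
and then Lemma~\ref{lemma:trivial} yields $M := \sup_{\tau \in [0,T]}\sup_{y \in V,\|x_0-y\|_V \le R+\varepsilon}\|f(\tau,y)\|_V < \infty$. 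Applying Corollary~\ref{corollary:Extension} with this $\varepsilon$ provides $\delta \in (0,\infty)$ and, on $I := [s_0-\delta,s_0+\delta]\cap[0,T]$, continuous functions $Y^x_{s,(\cdot)}\colon I \to V$ (for $s \in I$, $\|x_0-x\|_V \le R$) which satisfy $Y^x_{s,t} = x + \int_s^t f(\tau,Y^x_{s,\tau})\,d\tau$, which coincide with $X^x_{s,t}$ whenever $s \le t$, and which obey $\sup_{t \in I}\|Y^x_{s,t}-x_0\|_V \le R+\varepsilon$. Decreasing $\delta$ if necessary (this only restricts the $Y^x_{s,(\cdot)}$ and preserves uniqueness by Lemma~\ref{lemma:uniqueness}), I may assume $2L\delta \le \tfrac12$.

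The core step is a comparison estimate. For $s_1,s_2 \in I$ and $x_1,x_2 \in \{v \in V : \|x_0-v\|_V \le R\}$, writing $\int_{s_1}^t = \int_{s_1}^{s_2} + \int_{s_2}^t$ in the two integral equations and estimating the resulting Bochner integrals by $M$ (on the interval between $s_1$ and $s_2$) and by $L$ (using that both solutions stay in the ball of radius $R+\varepsilon$ around $x_0$) gives, for all $t \in I$,
\begin{equation}
\|Y^{x_1}_{s_1,t}-Y^{x_2}_{s_2,t}\|_V \le \|x_1-x_2\|_V + M\,|s_1-s_2| + L\,\Big| \int_{s_2}^t \|Y^{x_1}_{s_1,\tau}-Y^{x_2}_{s_2,\tau}\|_V\,d\tau \Big|.
\end{equation}
Since $\tau \mapsto \|Y^{x_1}_{s_1,\tau}-Y^{x_2}_{s_2,\tau}\|_V$ is bounded on $I$ by $2(R+\varepsilon)$, bounding the last integral by $2\delta\sup_{\tau \in I}\|Y^{x_1}_{s_1,\tau}-Y^{x_2}_{s_2,\tau}\|_V$, taking the supremum over $t \in I$, and absorbing the resulting term via $2L\delta \le \tfrac12$ yields $\sup_{t \in I}\|Y^{x_1}_{s_1,t}-Y^{x_2}_{s_2,t}\|_V \le 2\|x_1-x_2\|_V + 2M\,|s_1-s_2|$. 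Combining this with the elementary bound $\|Y^x_{s,t_1}-Y^x_{s,t_2}\|_V = \|\int_{t_1}^{t_2}f(\tau,Y^x_{s,\tau})\,d\tau\|_V \le M\,|t_1-t_2|$ (valid for $s,t_1,t_2 \in I$) and with the fact that $X^x_{s,t} = Y^x_{s,t}$ for $s \le t$, I obtain for all triples $(s_1,t_1,x_1)$, $(s_2,t_2,x_2)$ in the set in~\eqref{eq:LemmaUnifCont} that
\begin{equation}
\|X^{x_1}_{s_1,t_1}-X^{x_2}_{s_2,t_2}\|_V \le \|Y^{x_1}_{s_1,t_1}-Y^{x_2}_{s_2,t_1}\|_V + \|Y^{x_2}_{s_2,t_1}-Y^{x_2}_{s_2,t_2}\|_V \le 2\|x_1-x_2\|_V + 2M\,|s_1-s_2| + M\,|t_1-t_2|,
\end{equation}
so the map in~\eqref{eq:LemmaUnifCont} is Lipschitz, hence uniformly continuous, which completes the proof with this $\delta$.

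I expect the main obstacle to be the lack of compactness of balls in the possibly infinite-dimensional space $V$: one cannot deduce uniform continuity of $f$ on the relevant bounded set directly, so the whole argument must go through the explicit spatial Lipschitz constant $L$ and the $L^\infty$-contraction (equivalently, Gr\"onwall) estimate above. A minor technical point is that the functions $X^x_{s,(\cdot)}$ are defined only for times $t \ge s$, whereas comparing solutions with different initial times is most naturally performed on the two-sided window $I$ — this is precisely why Corollary~\ref{corollary:Extension} (including its identification of the extended solutions with $X$) is invoked.
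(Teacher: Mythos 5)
Your proposal is correct and follows essentially the same route as the paper: the same constants $L$ and $M$ (via Lemma~\ref{lemma:trivial}), the same invocation of Corollary~\ref{corollary:Extension} to obtain the two-sided window and the a priori bound $R+\varepsilon$, and the same decomposition of the difference of two solutions into an initial-value term, an $M|s_1-s_2|$ term, and a Lipschitz integral term, finished off by the $M|t_1-t_2|$ time-increment bound. The only (cosmetic) difference is that you shrink $\delta$ so that $2L\delta\le\tfrac12$ and close the estimate by an $L^\infty$ absorption, whereas the paper keeps $\delta$ as given and applies Gronwall's lemma to obtain the factor $e^{L|t-u|}$.
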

\begin{proof}[Proof of Lemma~\ref{lemma:UnifCont}]
	Throughout this proof let
	$ \varepsilon, L \in \R $  
	be real numbers
	which satisfy that
	\begin{equation}
	L =
	\sup\nolimits_{ \tau \in [0, T] } 
	\sup\nolimits_{ y \in V,
			\| x_0 - y \|_V \leq R + \varepsilon }
		\sup\nolimits_{
			z \in V \backslash \{ y \},
			\| x_0 - z \|_V \leq R + \varepsilon }   
	\tfrac{
		\| f(\tau, y) - f( \tau, z ) \|_V
	}{
		\| y - z \|_V
	} 
	\end{equation}
	and
	let
	$ M \in [0, \infty] $
	be the extended real number given by
	\begin{equation}
	M =
	\sup\nolimits_{ \tau \in [0,T] }
	\sup\nolimits_{
		y \in V, \| x_0 - y \|_V \leq R + \varepsilon 
	}
	\| f(\tau, y) \|_V
	.
	\end{equation}
	Note that
	Lemma~\ref{lemma:trivial}
	(with
	$ V = V $,
	$ a = 0 $,
	$ b = T $,
	$ r = R + \varepsilon $,
	$ x = x_0 $,
	$ f = f $
	in the notation of Lemma~\ref{lemma:trivial})
	shows that
	$ M < \infty $.
	Next observe that
	Corollary~\ref{corollary:Extension}
	(with
	$ V = V $,
	$ T = T $,
	$ R = R $,
	$ \varepsilon = \varepsilon $,
	$ s_0 = s_0 $,
	$ x_0 = x_0 $,
	$ f = f $,
	$ X_{s,t}^x = X_{s,t}^x $
	for 
	$ x \in V $,
	$ (s,t) \in [0,T] $
	with
	$ s \leq t $
	in the notation of 
	Corollary~\ref{corollary:Extension})
	ensures that there exists 
	$ \delta \in (0, \infty) $
	such that for all
	$ s \in [s_0- \delta, s_0 + \delta] \cap [0,T] $,
	$ t \in [s, \min \{ s_0 + \delta, T \} ] $,
	$ x \in \{ v \in V \colon \| x_0 - v \|_V \leq R \} $  
	it holds that
	\begin{equation}
	\label{eq:estimate}
	\| X_{s,t}^x - x_0 \|_V 
	\leq 
	R + \varepsilon 
	.
	\end{equation}
	Moreover, note that for all
	$ s,t,u
	\in [ \max \{ s_0 - \delta, 0 \}, 
	\min \{ s_0 + \delta, T \} ] $,
	$ x, y \in
	\{ v \in V \colon \| x_0 - v \|_V \leq
	R \} $
	with
	$ s, u \in [0,t] $
	it holds that
	\begin{equation}
	\begin{split}
	\label{eq:Cont_estimate}
	&
	\| X_{s,t}^x - X_{u, t}^y \|_V
	\leq 
	\| x - y \|_V 
	+
	\Big\| \int_s^t 
	f( \tau, X_{s, \tau}^x ) 
	\, d \tau 
	-
	\int_u^t
	f( \tau, X_{ u, \tau}^y ) \, d \tau 
	\Big\|_V
	\\
	&
	\leq 
	\| x - y \|_V 
	+
	\max \bigg\{
	\Big\|
	\int_s^{ \max \{s,u\}}
	f( \tau, X_{s, \tau}^x ) 
	\, d \tau 
	\Big\|_V
	,
	\Big\|
	\int_u^{ \max \{s,u\}}
	f( \tau, X_{u, \tau}^y ) 
	\, d \tau 
	\Big\|_V
	\bigg\}
	\\
	&
	\quad
	+
	\Big\|
	\int_{ \max \{s,u\} }^t
	f( \tau, X_{s, \tau}^x ) 
	-
	f( \tau, X_{ u, \tau}^y ) 
	\, d \tau 
	\Big\|_V  
	\\
	&
	\leq 
    \| x - y \|_V 
    +
    \max \bigg\{
    \Big\|
    \int_s^{ \max \{s,u\}}
    f( \tau, X_{s, \tau}^x ) 
    \, d \tau 
    \Big\|_V
    ,
    \Big\|
    \int_u^{ \max \{s,u\}}
    f( \tau, X_{u, \tau}^y ) 
    \, d \tau 
    \Big\|_V
    \bigg\}
	\\
	&
	\quad
	+ 
	\int_{ \max \{ s, u \}}^t
	\| 
	f( \tau, X_{s, \tau}^x ) 
	-
	f( \tau, X_{ u, \tau}^{ y } ) 
	\|_V
	\, d \tau   
	.
	\end{split} 
	\end{equation}
	Combining this with~\eqref{eq:estimate}
	proves that for all
	$ s,t,u
	\in [ \max \{ s_0 - \delta, 0 \}, 
	\min \{ s_0 + \delta, T \} ] $,
	$ x, y \in 
	\{ v \in V \colon \| x_0 - v \|_V \leq
	R \} $
	with
	$ s, u \in [0,t] $
	it holds
	that
	\begin{equation}
	\begin{split}
	\| X_{s,t}^x - X_{u, t}^y \|_V
	\leq 
	\| x - y \|_V
	+ 
	M | u - s |  
	+
	L
	\int_u^t
	\| X_{s, \tau}^x - X_{ u, \tau }^y \|_V \, d \tau
	.
	\end{split}
	\end{equation}
	The fact that
	$ M < \infty $
	and
	Gronwall's lemma therefore
	imply that
	for all
	$ s, t, u
	\in [ \max \{ s_0 - \delta, 0 \}, 
	\min \{ s_0 + \delta, T \} ] $,
	$ x, y \in \{ v \in V \colon \| x_0 - v \|_V \leq
	R \} $
	with
	$ s, u \in [0,t] $
	it holds
	that
	\begin{equation}
	\begin{split}
	\label{eq:GronwalEstimate}
	\| X_{s,t}^x - X_{u,t}^y \|_V
	\leq 
	(
	\| x - y \|_V
	+
	M
	|u-s| 
	)
	e^{L | t-u |}
	.
	\end{split}
	\end{equation}
	In addition, note that~\eqref{eq:estimate}
	shows that for all
	$ s, t, \tau
	\in [ \max \{ s_0 - \delta, 0 \}, 
	\min \{ s_0 + \delta, T \} ] $,
	$ x \in \{ v \in V \colon \| x_0 - v \|_V \leq R \} $
	with
	$ s \leq \min \{ t, \tau \} $
	it holds that
	\begin{equation}
	\begin{split}
		\| X_{s,t}^x - X_{s,\tau}^x \|_V 
		\leq
		\int_{ \min \{ \tau, t \} }^{ \max \{ \tau, t \} } 
		\| f(r, X_{s,r}^x ) \|_V \, dr
		\leq
		M
		| t - \tau |
		.
		\end{split}
	\end{equation}
	Combining this with~\eqref{eq:GronwalEstimate} 
	assures that for all
	$ s, t, u, \tau
	\in [ \max \{ s_0 - \delta, 0 \}, 
	\min \{ s_0 + \delta, T \} ] $,
	$ x, y \in
	\{ v \in V \colon \| x_0 - v \|_V \leq R \} $
	with
	$ s \leq t $, $ u \leq \tau \leq t $
	it holds that
	\begin{equation}
	\begin{split}
	\| X_{s,t}^x - X_{u, \tau}^y \|_V
	&
	\leq
	\| X_{s,t}^x - X_{u, t}^y \|_V
	+
	\| X_{u, t}^y - X_{u, \tau}^y \|_V
	\\
	&
	\leq 
	( \| x - y \|_V + M | u - s | )
 e^{L T}
 +
 M
 | t - \tau | 
	. 
	\end{split}
	\end{equation}
	The fact that
	$ M < \infty $
	establishes~\eqref{eq:LemmaUnifCont}.
	The proof of Lemma~\ref{lemma:UnifCont}
	is thus completed.
\end{proof}
\begin{lemma}
	\label{lemma:Nicer_assumption}
	Let $ ( V, \left \| \cdot \right \|_V ) $
	be a nontrivial $ \R $-Banach space 
	and
	let
	$ T \in (0,\infty) $, 
	$ x_0 \in V $,
	$ f \in \mathcal{C}^{0,1}( [0,T] \times V, V) $.
	Then   
	\begin{equation} 
	\inf\nolimits_{ r \in (0,\infty) }
	\sup\nolimits_{ \tau \in [0,T] } 
	\sup\nolimits_{   y \in V, 
			\| x_0-y \|_V \leq r }
	\sup\nolimits_{
			z \in V \backslash \{ y \},
			\| x_0 - z \|_V \leq r }   
	\tfrac{
		\| f(\tau, y) - f( \tau, z ) \|_V
	}{
		\| y - z \|_V
	}
	< \infty 
	.
	\end{equation}
\end{lemma}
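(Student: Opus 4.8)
The plan is to combine compactness of $[0,T]$ with continuity of the Fréchet derivative $\tfrac{\partial}{\partial x} f$ near the compact slice $\{(\tau,x_0)\colon \tau\in[0,T]\}$ in order to obtain a uniform-in-$\tau$ operator-norm bound for $(\tfrac{\partial}{\partial x} f)(\tau,\cdot)$ on a sufficiently small closed ball around $x_0$, and then to upgrade this to the claimed local Lipschitz estimate by integrating along line segments.

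First I would use that the hypothesis $f\in\mathcal{C}^{0,1}([0,T]\times V,V)$ in particular ensures that $\tfrac{\partial}{\partial x} f\colon [0,T]\times V\to L(V)$ is continuous. Hence, for each $\tau\in[0,T]$, continuity of $\tfrac{\partial}{\partial x} f$ at $(\tau,x_0)$ together with the fact that the product topology on $[0,T]\times V$ has a basis of products of balls yields $\delta_\tau,\rho_\tau\in(0,\infty)$ such that for all $s\in[0,T]$ with $|s-\tau|<\delta_\tau$ and all $w\in V$ with $\|w-x_0\|_V<\rho_\tau$ it holds that $\|(\tfrac{\partial}{\partial x} f)(s,w)-(\tfrac{\partial}{\partial x} f)(\tau,x_0)\|_{L(V)}<1$. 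Since $[0,T]=\bigcup_{\tau\in[0,T]}(\tau-\delta_\tau,\tau+\delta_\tau)$ and $[0,T]$ is compact, I would extract $n\in\N$ and $\tau_1,\ldots,\tau_n\in[0,T]$ with $[0,T]\subseteq\bigcup_{i=1}^n(\tau_i-\delta_{\tau_i},\tau_i+\delta_{\tau_i})$ and set $\rho=\min_{i\in\{1,\ldots,n\}}\rho_{\tau_i}\in(0,\infty)$ and $C=1+\max_{i\in\{1,\ldots,n\}}\|(\tfrac{\partial}{\partial x} f)(\tau_i,x_0)\|_{L(V)}\in[0,\infty)$. This gives that for all $s\in[0,T]$ and all $w\in V$ with $\|w-x_0\|_V<\rho$ it holds that $\|(\tfrac{\partial}{\partial x} f)(s,w)\|_{L(V)}\leq C$.

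Then I would fix $r\in(0,\rho)$ (for instance $r=\nicefrac{\rho}{2}$) and note that, by convexity of balls, for all $y,z\in V$ with $\|x_0-y\|_V\leq r$ and $\|x_0-z\|_V\leq r$ the segment $\{y+t(z-y)\colon t\in[0,1]\}$ is contained in $\{w\in V\colon\|x_0-w\|_V\leq r\}\subseteq\{w\in V\colon\|x_0-w\|_V<\rho\}$. The chain rule, the continuity of the evaluation map $L(V)\times V\ni(A,v)\mapsto Av\in V$, and Lemma~\ref{lemma:FundamentaTheoremOfCalculus} (applied with $a=0$, $b=1$, and $F=([0,1]\ni t\mapsto f(\tau,y+t(z-y))\in V)$) then yield, for every $\tau\in[0,T]$, that $f(\tau,z)-f(\tau,y)=\int_0^1(\tfrac{\partial}{\partial x} f)(\tau,y+t(z-y))(z-y)\,dt$ and hence $\|f(\tau,z)-f(\tau,y)\|_V\leq C\,\|z-y\|_V$. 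Taking the suprema over $\tau\in[0,T]$ and over $y,z$ in the closed ball of radius $r$ around $x_0$ (with $y\neq z$), and finally the infimum over $r\in(0,\infty)$, shows that the quantity in the statement is at most $C<\infty$.

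The only genuine subtlety is that, because $V$ is allowed to be infinite dimensional, the set $[0,T]\times\{w\in V\colon\|x_0-w\|_V\leq r\}$ need not be compact, so one cannot simply invoke boundedness of the continuous map $\tfrac{\partial}{\partial x} f$ on it. The workaround is to exploit compactness only of the one-dimensional factor $[0,T]$ (equivalently, of the slice $[0,T]\times\{x_0\}$) together with continuity of $\tfrac{\partial}{\partial x} f$ transversally to that slice; this is precisely what the finite-subcover step accomplishes. Everything else in the argument is routine.
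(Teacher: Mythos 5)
Your proposal is correct and follows essentially the same route as the paper's proof: continuity of $\tfrac{\partial}{\partial x} f$ near the compact slice $[0,T]\times\{x_0\}$ plus a finite subcover of $[0,T]$ to get a uniform operator-norm bound on a small ball around $x_0$, followed by the integral mean value estimate along line segments. The only differences are cosmetic (separate radii $\delta_\tau,\rho_\tau$ instead of a single one, and a slightly different constant), and your remark about the non-compactness of balls in infinite-dimensional $V$ correctly identifies why the finite cover is taken only in the time variable.
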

\begin{proof}[Proof of Lemma~\ref{lemma:Nicer_assumption}]
	Throughout this proof let
	$ f_{0,1} \colon [0,T] \times V \to L( V ) $
	be the function 
	which satisfies 
	for all
	$ t \in [0,T] $, $ x \in V $ that
	$ f_{0,1}(t,x) = ( \tfrac{ \partial }{\partial x } f )(t,x) $.
Note that the 
	assumption that
	$ f \in \mathcal{C}^{0,1}( [0,T] \times V, V) $ 
	implies that
	there exists a function
	$ \delta \colon [ 0,T]   \to (0,\infty) $ 
	such that for all
	$ x \in V $,
	$ s, t \in [0, T] $ 
	with 
	$ \max \{ |s-t|, \|x-x_0\|_V \}<\delta_{t } $ 
	it holds that
	\begin{equation}
	\label{eq:cont_condition2}
	\| f_{0,1}(s,x) - f_{0,1}(t,x_0) \|_{L(V)}
	<
	\tfrac{ 1 }{2} 
	.
	\end{equation}
	Moreover,
	observe that the fact that
	$ [0, T] $ is compact ensures that 
	there exist
	$ n \in \N $,
	$ t_1, \ldots, t_n \in [0, T] $
	such that 
	\begin{equation}
	0 = t_1 < \ldots < t_n = T
	\qquad
	\text{and}
	\qquad
	[ 0, T ] = \cup_{ i = 1 }^n  
	\{ r \in [0, T] \colon | r - t_i | < \delta_{t_i } \}
	.
	\end{equation}
	This and~\eqref{eq:cont_condition2} demonstrate that 
	there exist
	$ n \in \N $,
	$ t_1, \ldots, t_n \in [a,b] $
	such that for all
	$ x \in V $,
	$ t \in [0, T] $ 
	with
	$ \| x - x_0 \|_V < \min \{ \delta_{t_1 },  \ldots, \delta_{t_n } \} $
	it holds that
	\begin{equation}
	\begin{split}
	&
	\| f_{0,1} (t, x) - f_{0,1} (t, x_0) \|_V 
		\\
		&
	\leq
	\min_{i\in [0,n]\cap \N} 
	\big| 
	\| f_{0,1} (t, x) - f_{0,1} (t_i, x_0) \|_V
	+
	\| f_{0,1} (t_i, x_0) - f_{0,1} (t, x_0) \|_V
	\big|
	%
	<
	\tfrac{ 1 }{2}
	+
	\tfrac{ 1 }{2} 
	=
	1
	.
	\end{split}
	\end{equation}
	Hence, we obtain that for all
	$ t \in [0,T] $,
	$ y, z \in V $ 
	with
	$ \max \{ \| y - x_0 \|_V, 
	\| z - x_0 \|_V \} 
	<
	\min \{ \delta_{t_1 },  \ldots,
	\delta_{t_n } \} $ 
	it holds
	that
	\begin{equation}
	\begin{split}
	&
	\| f(t, y) - f (t,z) \|_V
	=
	\Big\| 
	\int_0^1 f_{0,1}(t, z + s(y-z) )(y-z) \, ds
	\Big\|_V
	\\
	&
	\leq
	\int_0^1 
	\| f_{0,1}(t, z + s(y-z) ) \|_{L(V)}
	\| y - z \|_V 
	\, ds
	\\
	&
	\leq 
	\| f_{0,1}(t, x_0 ) \|_V
	\| y - z \|_V
	+
	\int_0^1 
	\| f_{0,1}(t, z + s(y-z) ) - f_{0,1}(t, x_0 ) \|_{L(V)}
	\| y - z \|_V 
	\, ds	
	\\
	&
	<
	(
	\| f_{0,1}(t, x_0 ) \|_V 
	+
	1
	)
	\| y - z \|_V
	\leq 
	\big(
	\sup\nolimits_{ \tau \in [0,T] }
	\| f_{0,1}(\tau, x_0 ) \|_V 
	+
	1
	\big)
	\| y - z \|_V
	<
	\infty  
	.
	\end{split}
	\end{equation}
The proof of
	Lemma~\ref{lemma:Nicer_assumption}
	is thus completed. 
\end{proof}
\begin{lemma}
	\label{lemma:flow_property}
	Let
	$ (V, \left\| \cdot \right\|_V) $ 
	be a nontrivial $ \R $-Banach space, 
	let
	$ T \in (0, \infty) $,
	$ f \in \mathcal{C}^{0,1}( [0, T] \times V, V) $,   
	and 
	for every
	$ x \in V $,
	$ s \in [0,T] $
	let
	$ X_{s, (\cdot)}^x 
	=
	(X_{s,t}^x)_{ t \in [s,T] }
	\colon [s,T] \to V $
	be a continuous function
which satisfies for all
	$ t \in [s,T] $
	that
	$ X_{s,t}^x = 
	x
	+
	\int_s^t f( \tau , X_{s, \tau }^x) \,d\tau $.
	Then it holds for all 
	$ x \in V $,
	$ t_1 \in [0,T] $,
	$ t_2 \in [t_1, T] $,
	$ t_3 \in [t_2, T] $
	that
	$ X_{t_2, t_3}^{X_{t_1, t_2}^x} = X_{t_1, t_3}^x $.
\end{lemma}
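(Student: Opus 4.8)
The plan is to fix $x \in V$ and times $t_1 \in [0,T]$, $t_2 \in [t_1,T]$, $t_3 \in [t_2,T]$ and to prove the stronger identity that the two continuous curves $[t_2,T] \ni t \mapsto X_{t_1,t}^x \in V$ and $[t_2,T] \ni t \mapsto X_{t_2,t}^{X_{t_1,t_2}^x} \in V$ coincide on all of $[t_2,T]$; evaluating this identity at $t = t_3$ then yields the assertion of the lemma, and the degenerate cases $t_1 = t_2$ or $t_2 = t_3$ are immediate. Both curves are continuous by hypothesis, so the substantive work is to show that they solve one and the same integral equation over the interval $[t_2,T]$ and then to invoke the uniqueness statement of Lemma~\ref{lemma:uniqueness}.

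First I would use the defining integral equation for $X_{t_1,(\cdot)}^x$ and split the integral at $t_2$: for every $t \in [t_2,T]$,
\[
X_{t_1,t}^x = x + \int_{t_1}^t f(\tau, X_{t_1,\tau}^x)\,d\tau = \Big( x + \int_{t_1}^{t_2} f(\tau, X_{t_1,\tau}^x)\,d\tau \Big) + \int_{t_2}^t f(\tau, X_{t_1,\tau}^x)\,d\tau = X_{t_1,t_2}^x + \int_{t_2}^t f(\tau, X_{t_1,\tau}^x)\,d\tau.
\]
Combining this with the identity $X_{t_2,t}^{X_{t_1,t_2}^x} = X_{t_1,t_2}^x + \int_{t_2}^t f(\tau, X_{t_2,\tau}^{X_{t_1,t_2}^x})\,d\tau$ shows that the two curves $X := (X_{t_1,t}^x)_{t\in[t_2,T]}$ and $Y := (X_{t_2,t}^{X_{t_1,t_2}^x})_{t\in[t_2,T]}$ both satisfy $X_t - \int_{t_2}^t f(\tau, X_\tau)\,d\tau = X_{t_1,t_2}^x = Y_t - \int_{t_2}^t f(\tau, Y_\tau)\,d\tau$ for all $t \in [t_2,T]$.

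Next I would supply the local Lipschitz-type hypothesis that Lemma~\ref{lemma:uniqueness} requires of its coefficient function at every base point. Since $f \in \mathcal{C}^{0,1}([0,T]\times V, V)$, Lemma~\ref{lemma:Nicer_assumption}, applied with $x_0$ taken to be an arbitrary element of $V$, furnishes for every $x \in V$ the finiteness of the relevant $\inf_r \sup_\tau \sup_y \sup_z$ quantity over $[0,T]$, and this restricts to the subinterval $[t_2,T]$. Then Lemma~\ref{lemma:uniqueness} (with $a = t_2$, $b = T$, $s = t_2$, the coefficient $f$ restricted to $[t_2,T]\times V$, and $X$, $Y$ as above) gives $X_{t_1,t}^x = X_{t_2,t}^{X_{t_1,t_2}^x}$ for all $t \in [t_2,T]$; specializing to $t = t_3$ completes the argument.

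I do not expect a genuine obstacle: this is the standard derivation of the flow (semigroup) property from uniqueness, and every ingredient—continuity of the solution curves, the local Lipschitz bound coming from $\mathcal{C}^{0,1}$-regularity, and the uniqueness principle—is already available in the excerpt. The only points requiring care are the bookkeeping in the integral splitting and checking that the form of the integral identity fed into Lemma~\ref{lemma:uniqueness} matches exactly (both curves having the same constant value $X_{t_1,t_2}^x$ after subtracting $\int_{t_2}^{(\cdot)} f(\tau,\cdot\,)\,d\tau$), together with confirming that the Lipschitz hypothesis of Lemma~\ref{lemma:uniqueness} is quantified over all base points in $V$, which is precisely what Lemma~\ref{lemma:Nicer_assumption} delivers.
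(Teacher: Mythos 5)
Your proof is correct, but it takes a different route from the paper's. You reduce the flow property to the uniqueness statement of Lemma~\ref{lemma:uniqueness}: after splitting the integral at $t_2$ you observe that $[t_2,T]\ni t\mapsto X_{t_1,t}^x$ and $[t_2,T]\ni t\mapsto X_{t_2,t}^{X_{t_1,t_2}^x}$ satisfy the same integral identity with the same constant $X_{t_1,t_2}^x$, you verify the pointwise local Lipschitz hypothesis of Lemma~\ref{lemma:uniqueness} via Lemma~\ref{lemma:Nicer_assumption}, and you conclude that the two curves coincide on all of $[t_2,T]$. The paper instead argues directly: it writes $f(\tau, X_{t_2,\tau}^{X_{t_1,t_2}^x})-f(\tau,X_{t_1,\tau}^x)$ as $\int_0^1 f_{0,1}(\tau,\cdot)(\cdots)\,dr$, bounds the resulting operator norms by a finite supremum over the compact segment joining the two trajectories, obtains the integral inequality $\|X_{t_2,t_3}^{X_{t_1,t_2}^x}-X_{t_1,t_3}^x\|_V\le C\int_{t_2}^{t_3}\|X_{t_2,\tau}^{X_{t_1,t_2}^x}-X_{t_1,\tau}^x\|_V\,d\tau$, and applies Gronwall's lemma. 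The two arguments are of essentially equal depth (Lemma~\ref{lemma:uniqueness} itself rests on a local contraction argument), but yours has the merit of reusing the already established uniqueness and regularity lemmas rather than redoing a Gronwall estimate, at the price of having to check carefully that the hypotheses of Lemma~\ref{lemma:uniqueness} are met on the subinterval $[t_2,T]$ — which you do, and which Lemma~\ref{lemma:Nicer_assumption} indeed supplies for every base point since $f\in\mathcal{C}^{0,1}([0,T]\times V,V)$. No gap.
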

\begin{proof}[Proof of Lemma~\ref{lemma:flow_property}]
	Throughout this proof let
	$ f_{0,1} \colon [0,T] \times V \to L( V ) $
	be the function 
	which satisfies 
	for all
	$ t \in [0,T] $, $ x \in V $ that
	$ f_{0,1}(t,x) = ( \tfrac{ \partial }{\partial x } f )(t,x) $.
	Observe that for all
	$ x \in V $,
	$ t_1 \in [0,T] $,
	$ t_2 \in [t_1, T] $,
	$ t_3 \in [t_2, T] $
	it holds that
	\begin{equation}
	\begin{split}
	X_{t_2, t_3}^{ X_{t_1, t_2}^x }
	=
	X_{t_1, t_2}^x
	+
	\int_{t_2}^{t_3} 
	f \big( \tau, X_{t_2, \tau}^{ X_{t_1, t_2}^x } \big) 
	\, d \tau
	.
	\end{split}
	\end{equation}
	This,
	the assumption  that
	$ \forall \, x \in V, s \in [0,T] \colon
	( [s,T] \ni t \mapsto X_{s,t}^x \in V ) 
	\in \mathcal{C} ( [s,T], V ) $,
	and
	the assumption that
	$ f \in \mathcal{C}^{0,1}( [0,T] \times V, V ) $
	imply that for all
	$ x \in V $,
	$ t_1 \in [0,T] $,
	$ t_2 \in [t_1, T] $,
	$ t_3 \in [t_2, T] $
	it holds that
	\begin{equation}
	\begin{split}
	&
	\| 
	X_{t_2, t_3}^{ X_{t_1, t_2}^x }
	-
	X_{t_1, t_3}^x
	\|_V
	=
	\Big\|
	\int_{t_2}^{t_3}
	f \big( \tau, X_{ t_2, \tau}^{X_{t_1, t_2}^x } \big) \, d \tau
	+
	X_{t_1, t_2}^x
	-
	X_{t_1, t_3}^x
	\Big\|_V
	\\
	&
	=
	\Big\| 
	\int_{t_2}^{t_3}
	f \big( \tau, X_{ t_2, \tau}^{X_{t_1, t_2}^x } \big) \, d \tau
	-
	\int_{t_2}^{t_3}
	f ( \tau, X_{t_1, \tau}^x ) \, d\tau
	\Big\|_V
	\leq
	\int_{t_2}^{t_3}
	\Big\| 
	f \big( \tau, X_{t_2, \tau}^{X_{t_1, t_2}^x } \big)
	-
	f ( \tau, X_{t_1, \tau}^x ) 
	\Big \|_V \, d\tau
	\\
	&
	=
	\int_{t_2}^{t_3}
	\Big\|
	\int_0^1
	f_{0,1}
	\Big( \tau, 
	X_{t_1, \tau}^x
	+
	\big( 
	X_{t_2, \tau}^{X_{t_1, t_2}^x }
	-
	X_{t_1, \tau}^x
	\big)
	r
	\Big)
	\Big( 
	X_{t_2, \tau}^{X_{t_1, t_2}^x }
	-
	X_{t_1, \tau}^x
	\Big)
	\, dr
	\Big\|_V
	\, d\tau
	\\
	&
	\leq
	\int_{t_2}^{t_3} 
	\Big(
	\int_0^1
	\Big \|
	f_{0,1}
	\Big ( \tau, 
	X_{t_1, \tau}^x
	+
	\big( 
	X_{t_2, \tau}^{X_{t_1, t_2}^x }
	-
	X_{t_1, \tau}^x
	\big)
	r
	\Big )
	\Big\|_{L(V)}
	\, dr 
	\Big)
	\big\|
	X_{t_2, \tau}^{X_{t_1, t_2}^x }
	-
	X_{t_1, \tau}^x 
	\big\|_V
	\, d\tau
	\\
	&
	\leq
	\sup_{r \in [0,1], t \in [t_2,T]}
	\Big\|
	f_{0,1}
	\Big( t, 
	X_{t_1, t}^x
	+
	\big( 
	X_{t_2, t}^{X_{t_1, t_2}^x }
	-
	X_{t_1, t}^x
	\big)
	r
	\Big)
	\Big\|_{L(V)}
	\int_{t_2}^{t_3} 
	\big\|
	X_{t_2, \tau}^{X_{t_1, t_2}^x }
	-
	X_{t_1, \tau}^x 
	\big\|_V
	\, d\tau
	< \infty.
	\end{split}
	\end{equation}
	Gronwall's lemma hence shows
	for all
	$ x \in V $,
	$ t_1 \in [0,T] $,
	$ t_2 \in [t_1, T] $,
	$ t_3 \in [t_2, T] $
	that
	$ X_{t_1, t_3}^x = X_{t_2, t_3}^{X_{t_1, t_2}^x} $.
	The proof of Lemma~\ref{lemma:flow_property}
	is thus completed.
\end{proof}
\begin{corollary}
	\label{Corollary:UnifCont}
	Let
	$ (V, \left\| \cdot \right\|_V) $ 
	be a nontrivial $ \R $-Banach space, 
	let
	$ T \in (0,\infty) $,  
	$ f \in \mathcal{C}^{0,1} ( [0,T] \times V, V) $,  
	and
	for every
	$ x \in V $,
	$ s \in [0,T] $
	let
	$ X_{s,(\cdot)}^x \colon [s,T] \to V $
	be a continuous function
	which satisfies for all
	$ t \in [s,T] $ that
	$ X_{s, t}^x = 
	x
	+
	\int_s^t f( \tau, X_{s, \tau}^x) \,d\tau $.
	Then it holds that 
	$ \{ (u, v) \in [0,T]^2 \colon u \leq v \} \times V 
	\ni
	(s,t,x)
	\mapsto
	X_{s,t}^x \in V $
	is a continuous function.
\end{corollary}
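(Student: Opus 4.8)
The plan is to show that the function $(s,t,x) \mapsto X_{s,t}^x$ is continuous at an arbitrarily fixed point $(s_0,t_0,x_0)$ of its domain $D := \{(u,v) \in [0,T]^2 \colon u \le v\} \times V$; since every point of $D$ is of this form, this establishes the corollary. As a preliminary I would record that the hypotheses of Lemma~\ref{lemma:UnifCont} are satisfied for any center in $[0,T]$, any initial value in $V$, and every sufficiently small radius $R \in (0,\infty)$: the ``$\forall\,x$'' local Lipschitz hypothesis is exactly Lemma~\ref{lemma:Nicer_assumption} applied at each $x$, and, since Lemma~\ref{lemma:Nicer_assumption} supplies a ball around any prescribed point on which $f(\tau,\cdot)$ is Lipschitz uniformly in $\tau$, the remaining hypothesis holds once $R$ is chosen below the radius of that ball. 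In particular, in the case $s_0 = t_0$, Lemma~\ref{lemma:UnifCont} (with center $s_0$, initial value $x_0$, and a small $R > 0$) yields $\delta \in (0,\infty)$ such that $(s,t,x) \mapsto X_{s,t}^x$ is uniformly continuous on $\{(u,v) \in ([s_0-\delta,s_0+\delta]\cap[0,T])^2 \colon u \le v\} \times \{v \in V \colon \|x_0 - v\|_V \le R\}$, a neighbourhood of $(s_0,s_0,x_0)$ in $D$, which already gives continuity there.

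Assume henceforth $s_0 < t_0$ and set $\gamma(r) = X_{s_0,r}^{x_0}$ for $r \in [s_0,t_0]$; by hypothesis $\gamma$ is continuous, hence uniformly continuous with compact image. For each $r \in [s_0,t_0]$ I would use Lemma~\ref{lemma:Nicer_assumption} to pick a radius $R_r \in (0,\infty)$ for which Lemma~\ref{lemma:UnifCont} applies with center $r$ and initial value $\gamma(r)$, obtain from Lemma~\ref{lemma:UnifCont} a number $\delta_r \in (0,\infty)$, and shrink $\delta_r$ further so that, in addition, $\|\gamma(\sigma) - \gamma(r)\|_V < R_r/2$ whenever $|\sigma - r| < \delta_r$; then $(s,t,x) \mapsto X_{s,t}^x$ is uniformly continuous on
\[
S_r := \{(u,v) \in ([r - \delta_r, r + \delta_r]\cap[0,T])^2 \colon u \le v\} \times \{v \in V \colon \|\gamma(r) - v\|_V \le R_r\}.
\]
Since the open intervals $(r - \delta_r, r + \delta_r)$, $r \in [s_0,t_0]$, cover the compact interval $[s_0,t_0]$, I would pass to a finite subcover, take a Lebesgue number $\lambda \in (0,\infty)$ for it, and fix a partition $s_0 = \sigma_0 < \sigma_1 < \cdots < \sigma_N = t_0$ with $N \ge 2$ and mesh less than $\lambda$; then for each $i \in \{1,\ldots,N\}$ there is $r_i$ with $[\sigma_{i-1},\sigma_i] \subseteq (r_i - \delta_{r_i}, r_i + \delta_{r_i})$, and hence also $\gamma(\sigma_{i-1}), \gamma(\sigma_i) \in \{v \in V \colon \|\gamma(r_i) - v\|_V < R_{r_i}/2\}$.

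Finally I would choose a neighbourhood $U$ of $(s_0,t_0,x_0)$ in $D$ small enough that every $(s,t,x) \in U$ satisfies $s < \sigma_1 < \sigma_2 < \cdots < \sigma_{N-1} < t$ and keeps the relevant fixed time arguments inside the corresponding open intervals $(r_i - \delta_{r_i}, r_i + \delta_{r_i})$; this is possible since $s_0 < \sigma_1$ and $\sigma_{N-1} < t_0$. For $(s,t,x) \in U$, repeated use of Lemma~\ref{lemma:flow_property} gives
\[
X_{s,t}^x = X_{\sigma_{N-1},t}^{\,z_{N-1}}, \qquad z_1 := X_{s,\sigma_1}^x, \qquad z_i := X_{\sigma_{i-1},\sigma_i}^{\,z_{i-1}} \quad (i \in \{2,\ldots,N-1\}).
\]
I would then argue by induction on $i$ that $(s,x) \mapsto z_1$ is continuous on $U$ with $z_1 \to \gamma(\sigma_1)$ as $(s,t,x) \to (s_0,t_0,x_0)$ (using uniform continuity of the flow on $S_{r_1}$, and noting that $x$ stays in the ball of $S_{r_1}$ because $\|\gamma(\sigma_0) - \gamma(r_1)\|_V < R_{r_1}/2$), and that, if $z_{i-1} \to \gamma(\sigma_{i-1})$, then, after shrinking $U$, $z_{i-1}$ lies in the ball of $S_{r_i}$, so $z_{i-1} \mapsto z_i$ is continuous and $z_i \to X_{\sigma_{i-1},\sigma_i}^{\gamma(\sigma_{i-1})} = \gamma(\sigma_i)$ by Lemma~\ref{lemma:flow_property}; the last map $(z_{N-1},t) \mapsto X_{\sigma_{N-1},t}^{z_{N-1}}$ is handled analogously via $S_{r_N}$ and has limit $X_{\sigma_{N-1},t_0}^{\gamma(\sigma_{N-1})} = X_{s_0,t_0}^{x_0}$. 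Composing these continuous maps yields $X_{s,t}^x \to X_{s_0,t_0}^{x_0}$, i.e.\ continuity of $(s,t,x) \mapsto X_{s,t}^x$ at $(s_0,t_0,x_0)$. I expect the main obstacle to be exactly this last step: setting up the neighbourhood $U$ and the partition so that the flow decomposition via Lemma~\ref{lemma:flow_property} is legitimate and every factor of it genuinely falls within the scope of Lemma~\ref{lemma:UnifCont}; the compactness/Lebesgue-number bookkeeping (rather than any deeper new idea) is what makes the argument work, since the local radii $\delta_r$ furnished by Lemma~\ref{lemma:UnifCont} need not be bounded away from $0$.
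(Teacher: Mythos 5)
Your proposal is correct and follows essentially the same route as the paper's proof: local uniform continuity of the flow from Lemma~\ref{lemma:UnifCont} (with the hypotheses supplied by Lemma~\ref{lemma:Nicer_assumption}), a finite cover of the compact time interval $[s_0,t_0]$ along the reference trajectory, and a step-by-step propagation of continuity through intermediate times via the flow property of Lemma~\ref{lemma:flow_property}. The only differences are organizational (your Lebesgue-number/partition bookkeeping and forward induction versus the paper's overlap points $\tau_j$ and backward choice of tolerances $\varepsilon_n,\dots,\varepsilon_0$), so no further comparison is needed.
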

\begin{proof}[Proof of Corollary~\ref{Corollary:UnifCont}]
Throughout this proof
we denote by
$ \angle_T \subseteq [0,T]^2 $ 
the set given by
$ \angle_T = \{ (s, t) \in [0,T]^2 \colon s \leq t \} $,
let
$ (s_0, t_0, x_0 ) \in \angle_T \times V $,
and let
$ \varepsilon \in (0, \infty) $.
Note that 
Lemma~\ref{lemma:Nicer_assumption}
(with
$ V = V $,
$ T = T $, 
$ x_0 = x $,
$ f = f $
for 
$ x \in V $
in the notation of 
Lemma~\ref{lemma:Nicer_assumption})
shows that
there exists a function
$ r \colon V \to (0,\infty) $
such that
for every
$ x \in V $
it holds that
	\begin{equation} 
	\label{eq:locLipCondition}
	\sup\nolimits_{ \tau \in [0,T] } 
	\sup\nolimits_{  y \in V, \| x - y \|_V \leq 2 r_x }
	\sup\nolimits_{ z \in V \backslash \{ y \}, 
			\| x-z \|_V \leq 2 r_x}   
	\tfrac{
		\| f(\tau, y) - f( \tau, z ) \|_V
	}{
		\| y - z \|_V
	}
	< \infty 
	.
	\end{equation} 
Lemma~\ref{lemma:UnifCont}  
(with
$ V = V $,
$ T = T $,
$ R = r_x $, 
$ s_0 = s $,
$ x_0 = x $,
$ f = f $,
$ X_{s,t}^x = X_{s,t}^x $
for
$ (s,t) \in \angle_T $,
$ x \in V $
in the notation of
Lemma~\ref{lemma:UnifCont})
hence
ensures that there exists a function
$ \delta \colon [0,T] \times V \to (0,\infty) $
such that for all 
$ s \in  [0,T] $, 
$ x \in V $
it holds that 
\begin{equation} 
\begin{split}
\label{eq:Improved}
&
( \angle_T \cap 
[ s - \delta_{s,x}, s + \delta_{s,x} ]^2  )
\times 
\{ 
v \in V \colon 
\| x - v \|_V \leq 
r_x 
\}
\ni
(u,t,y)
\mapsto
X_{u,t}^y \in V 
\end{split}
\end{equation}
is a uniformly continuous function.
Furthermore, note that
the fact that
$ \forall \, x \in V, s \in [0,T] \colon 
( [s,T] \ni t  \mapsto  X_{ s, t }^x \in V ) \in 
\mathcal{C}( [s,T], V ) $
ensures that there exists a function
$ \Delta \colon [0,T]
\to (0, \infty) $
such that for all  
$ t, \tau \in [s_0,T] $ 
with
$ | t - \tau | < \Delta_t $
it holds that
\begin{equation}
\label{eq:continuity}
\| X_{s_0,t}^{x_0} - X_{s_0, \tau}^{x_0} \|_V
\leq 
\tfrac{1}{2}
r_{ X_{s_0,t}^{x_0} } 
.
\end{equation}
This implies that there exist non-empty intervals
$ I_{s,x} \subseteq [0,T] $, 
$ s \in [0,T] $,
$ x \in V $,
such that for all
$ s \in (0,T) $,
$ x \in V $
it holds that
$ I_{s,x} = ( \max \{ s - \delta_{s,x}, s- \Delta_s, 0 \}, 
\min \{ s + \delta_{s,x}, s+ \Delta_s, T \} ) $,
$ I_{0,x} = [0, \min \{ \delta_{0,x}, \Delta_0 \} ) $,
and
$ I_{T,x} = (T- \min \{ \delta_{T,x}, \Delta_T \}, T] $.
In addition, observe that
the fact that
$ \cup_{ s \in [s_0, t_0] } I_{s, X_{s_0,s}^{x_0} } \supseteq
 [s_0, t_0] $ 
 and
 the fact that
$ [s_0, t_0] $
is a compact set
 assure
 that there exist
$ n \in [2, \infty) \cap \N $,
$ s_1, \ldots, s_n \in (s_0, t_0 ] $
such that
$ s_1 < \ldots < s_n = t_0 $
and
\begin{equation} 
 [s_0, t_0 ] \subseteq 
\cup_{ j = 0 }^n I_{ s_j, X_{s_0, s_j}^{x_0} } 
.
\end{equation}
This demonstrates
that there exist real numbers
$ \tau_j \in (s_0, t_0) $, $ j \in [1,n] \cap \N $,
such that
for all
$ j \in [1,n] \cap \N $
it holds that
\begin{equation}
\begin{split}
\tau_j 
\in 
\big(
( I_{ s_{j-1}, X_{s_0, s_{j-1} }^{x_0} } )
\cap
( I_{ s_{j}, X_{s_0, s_{j} }^{x_0} } )
\big) 
.
\end{split}
\end{equation}
Moreover, note that~\eqref{eq:continuity}
(with 
$ t = s_n $,
$ \tau = \tau_n $) 
ensures that 
$ \| X_{s_0, s_n}^{x_0} - X_{s_0, \tau_n}^{x_0} \|_V 
\leq
\frac{1}{2}
r_{ X_{s_0, s_n }^{x_0} } $.
Combining this 
with~\eqref{eq:Improved}
(with
$ s = s_{n} $,
$ x = X_{s_0, s_n }^{x_0} $) 
and
Lemma~\ref{lemma:flow_property}
(with
$ V = V $,
$ T = T $,
$ f = f $,
$ X_{s,t}^x = X_{s,t}^x $
for 
$ (s, t ) \in \angle_T $,
$ x \in V $
in the notation of Lemma~\ref{lemma:flow_property})
proves that there exists 
$ \varepsilon_n \in (0, \min \{1,\delta_{s_n, X_{s_0, s_n}^{x_0} }, \frac{1}{2}
r_{ X_{s_0, s_n }^{x_0} } \} ) $
such that
for all 
$ s \in [0, \tau_n ] $,
$ t \in [ \tau_n, T ] $, 
$ x \in V $
with
$ \max \{ \| X_{s_0, \tau_n }^{x_0} - X_{s, \tau_n }^x \|_V,
| t_0 - t | \} < \varepsilon_n $
it holds that
$ \| X_{s_0, s_n}^{x_0} - X_{s, \tau_n}^{x} \|_V 
\leq 
r_{ X_{s_0, s_n }^{x_0} } $ 
and
\begin{equation}
\begin{split}
\label{eq:first_step}
\| X_{s_0, t_0}^{x_0} - X_{s,t}^x \|_V
=
\|X^{x_0}_{s_0,s_n}-X^x_{s,t}\|_V
=
\big\| 
X_{ \tau_n, t_0}^{ X_{s_0, \tau_n}^{x_0} } 
-
X_{ \tau_n, t}^{ X_{s, \tau_n}^x } 
\big\|_V
<
\varepsilon
.
\end{split}
\end{equation}
Furthermore, observe that~\eqref{eq:continuity} 
(with
$ t = s_j $,
$ \tau = \tau_j $
for 
$ j \in [1, n-1] \cap \N $) 
ensures that 
$ \| X_{s_0, s_j}^{x_0} - X_{s_0, \tau_j}^{x_0} \|_V 
\leq
\frac{1}{2}
r_{ X_{s_0, s_j }^{x_0} } $.
This, \eqref{eq:Improved}
(with
$ s = s_j $,
$ x = X_{s_0, s_j }^{x_0} $
for 
$ j \in [1, n - 1] \cap \N $), 
and
Lemma~\ref{lemma:flow_property}
(with
$ V = V $,
$ T = T $,
$ f = f $,
$ X_{s,t}^x = X_{s,t}^x $
for 
$ (s, t ) \in \angle_T $,
$ x \in V $
in the notation of Lemma~\ref{lemma:flow_property})
ensure that there exist 
$ \varepsilon_j \in (0, \min \{1,\delta_{s_j, X_{s_0, s_j}^{x_0} }, \frac{1}{2}
r_{ X_{s_0, s_j }^{x_0} } \} ) $,
$ j \in [1, n - 1] \cap \N $,
such that
for all
$ j \in [1, n - 1] \cap \N $,
$ s \in [0, \tau_{j}] $, 
$ x \in V $
with
$ \| X_{s_0, \tau_{j} }^{x_0} - X_{s, \tau_{j} }^x \|_V  
< \varepsilon_j $
it holds that
$ \| X_{s_0, s_j}^{x_0} - X_{s, \tau_j}^{x} \|_V 
\leq 
r_{ X_{s_0, s_j }^{x_0} } $ 
and
\begin{equation}
\begin{split}
\label{eq:general_step}
\| X_{s_0, \tau_{j+1} }^{x_0} - X_{s, \tau_{j+1} }^x \|_V
=
\big \| 
X_{ \tau_j, \tau_{j+1} }^{ X_{s_0, \tau_j }^{x_0} } 
-
X_{ \tau_j, \tau_{j+1} }^{ X_{s, \tau_j }^{x } } 
\big \|_V
<
\varepsilon_{j+1}
.
\end{split}
\end{equation}
In addition, note that~\eqref{eq:Improved}
(with
$ s = s_0 $,
$ x = x_0 $) 
shows that
there exists
$ \varepsilon_0 \in (0, \min \{ 1, \delta_{s_0, x_0 }, r_{x_0} \} ) $
such that
for all 
$ s \in [0, \tau_1] $,
$ x \in V $
with
$ \max \{ \| x_0 - x\|_V, | s_0 - s | \} 
< 
\varepsilon_0 $ 
it holds that
\begin{equation}
\| X_{s_0, \tau_1}^{x_0} - X_{ s, \tau_1 }^x \|_V
< \varepsilon_1
.
\end{equation}
Combining this with~\eqref{eq:first_step}
and~\eqref{eq:general_step}
implies that
for all
$ s \in [0, \tau_1 ] $,
$ t \in [s,T] $,
$ x \in V $ 
with
$ \max \{ \| x_0 - x \|_V, | s_0 - s | \} < 
 \varepsilon_0  $ 
 and 
$ | t_0 - t | 
< 
\min \{ \varepsilon_n,  
| t_0 - \tau_n | \} $
it holds that
\begin{equation}
\| X_{s_0,t_0}^{x_0} - X_{s,t}^x \|_V < \varepsilon.
\end{equation}
Hence, we obtain that there exists $ \delta \in (0, \infty) $ such that for all
$ (s,t,x) \in \angle_T \times V $
with
$ \max \{ | s - s_0|, 
| t - t_0 |,
\| x - x_0 \|_V \}
<
\delta $
it holds that
\begin{equation}
\| X_{s_0,t_0}^{x_0} - X_{s,t}^x \|_V < \varepsilon.
\end{equation}
The proof of Corollary~\ref{Corollary:UnifCont}
is thus completed.
\end{proof} 
\section{Continuous differentiability of solutions to initial value problems}
\label{subsection:ContinuousDifferentiability}
In this section we prove
in 
Lemma~\ref{lemma:Differentiable}
(cf., e.g., Driver~\cite[Theorem~19.13]{Driver2003})
differentiability properties of solutions to
initial value problems.
In order to do so, we recall a few elementary 
auxiliary results in
Lemmas~\ref{lemma:WellConti}--\ref{lemma:help_lemma}
(cf., e.g., Driver~\cite[Theorem~19.14]{Driver2003}),
Lemma~\ref{lemma:converse_chain_rule}
(cf., e.g., Driver~\cite[Theorem~19.7]{Driver2003}), 
and
Lemmas~\ref{lemma:existence_uniqueness}--\ref{lemma:bound}.
Then we combine them 
to establish
continuous differentiability 
of the solution to the considered initial value problem 
with respect to the initial data
in
Lemma~\ref{lemma:diff_initial_value}.
In addition,
we establish 
in
Lemma~\ref{lemma:Differentiable} 
continuous differentiability of the solution to the considered initial
value problem
with respect to 
the initial time as well as the current
time. 
\begin{lemma}
	\label{lemma:WellConti}
	Let
	$ (U, \left\| \cdot \right\|_U) $,
	$ (V, \left\| \cdot \right\|_V) $,
	and
	$ (W, \left\| \cdot \right\|_W) $   
	be 
	$ \R $-Banach spaces  
	and let
	$ T \in (0,\infty) $, 
	$ \angle_T = \{ (s, t) \in [0,T]^2 \colon s \leq t \} $, 
	$ f \in \mathcal{C} ( [0,T] \times U, L(V, W) ) $, 
	$ y \in 
	\mathcal{C}( \angle_T, U ) $, 
	$ h \in 
	\mathcal{C}( \angle_T, V ) $.
 	Then 
 	$ \angle_T \ni (s,t) \mapsto \int_s^t f( \tau, y( s, \tau ) )
 	h( s, \tau ) \, d \tau \in W $
 	is continuous.
\end{lemma}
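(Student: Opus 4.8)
The plan is to fix an arbitrary point $(s_0,t_0)\in\angle_T$ and an arbitrary $\varepsilon\in(0,\infty)$, and to show that the map $g\colon\angle_T\to W$ defined by $g(s,t)=\int_s^t f(\tau,y(s,\tau))h(s,\tau)\,d\tau$ satisfies $\|g(s,t)-g(s_0,t_0)\|_W<\varepsilon$ for all $(s,t)\in\angle_T$ sufficiently close to $(s_0,t_0)$. The first step is to record that, since $y$ and $h$ are continuous on the compact set $\angle_T$, their images are bounded, say $\sup_{(u,r)\in\angle_T}\|y(u,r)\|_U\le\rho$ and $\sup_{(u,r)\in\angle_T}\|h(u,r)\|_V\le\kappa$ for some $\rho,\kappa\in(0,\infty)$. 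Consequently $\tau\mapsto f(\tau,y(u,r))$ ranges over the compact set $f([0,T]\times\{v\in U\colon\|v\|_U\le\rho\})$-related values; more precisely, continuity of $f$ on the compact set $[0,T]\times\{v\in U\colon\|v\|_U\le\rho\}$ gives $C:=\sup_{\tau\in[0,T],\,\|v\|_U\le\rho}\|f(\tau,v)\|_{L(V,W)}<\infty$, and in particular $\sup_{(u,r)\in\angle_T}\|f(r,y(u,r))h(u,r)\|_W\le C\kappa=:K<\infty$.

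The second step is the decomposition, valid for all $(s,t),(s_0,t_0)\in\angle_T$,
\begin{equation}
\begin{split}
g(s,t)-g(s_0,t_0)
&=\int_{t_0}^t f(\tau,y(s,\tau))h(s,\tau)\,d\tau
-\int_{s_0}^s f(\tau,y(s,\tau))h(s,\tau)\,d\tau\\
&\quad+\int_{s_0}^{t_0}\big[f(\tau,y(s,\tau))h(s,\tau)-f(\tau,y(s_0,\tau))h(s_0,\tau)\big]\,d\tau.
\end{split}
\end{equation}
The first two integrals are over intervals of length $|t-t_0|$ and $|s-s_0|$, so by the bound $K$ from the first step their norms are at most $K|t-t_0|$ and $K|s-s_0|$, hence $<\varepsilon/3$ each once $\max\{|s-s_0|,|t-t_0|\}<\varepsilon/(3K+1)$. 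The third term is handled by dominated convergence / uniform continuity: for each fixed $\tau\in[s_0,t_0]$ the integrand tends to $0$ as $(s,t)\to(s_0,t_0)$ because $(u,r)\mapsto y(u,r)$ and $(u,r)\mapsto h(u,r)$ are continuous and $f$ and the evaluation map $L(V,W)\times V\ni(A,v)\mapsto Av\in W$ are continuous; moreover the integrand is bounded by $2K$ uniformly, so the integral over $[s_0,t_0]$ tends to $0$. To keep everything within the tools of the excerpt one can instead invoke Lemma~\ref{lemma:Help_cont} twice — once to see that $u\mapsto(r\mapsto y(u,r))$ is continuous into $\mathcal{C}([0,T],U)$ after extending $y$ continuously, and similarly for $h$ — together with uniform continuity of $f$ on the relevant compact set, to produce $\delta\in(0,\infty)$ with $\|f(\tau,y(s,\tau))h(s,\tau)-f(\tau,y(s_0,\tau))h(s_0,\tau)\|_W<\varepsilon/(3T+1)$ for all $\tau\in[s_0,t_0]$ whenever $\max\{|s-s_0|,|t-t_0|\}<\delta$; then this term is $<\varepsilon/3$ as well.

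Combining the three estimates and taking the minimum of the thresholds yields a $\delta\in(0,\infty)$ with $\|g(s,t)-g(s_0,t_0)\|_W<\varepsilon$ whenever $(s,t)\in\angle_T$ and $\max\{|s-s_0|,|t-t_0|\}<\delta$, which is the claimed continuity. The main obstacle is the third term: one needs the convergence $f(\tau,y(s,\tau))h(s,\tau)\to f(\tau,y(s_0,\tau))h(s_0,\tau)$ to be \emph{uniform in $\tau\in[s_0,t_0]$} (not merely pointwise) so that integrating over an interval of length at most $T$ is harmless; this is exactly what Lemma~\ref{lemma:Help_cont} provides once $y$ and $h$ are viewed as $U$- and $V$-valued curves depending continuously on the parameter $s$, after which uniform continuity of $f$ on a compact product set finishes the argument. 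The rest is bookkeeping with the constant $K$ and the triangle inequality.
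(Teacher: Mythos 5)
Your proposal is correct and follows essentially the same route as the paper: a uniform bound $K$ on the integrand from compactness and continuity, a triangle-inequality decomposition into interval-length terms bounded by $K\,|s-s_0|$ and $K\,|t-t_0|$ plus an integrand-difference integral, and dominated convergence (or uniform continuity) for that last term. The only wrinkle is that your splitting at $s_0$ and $t_0$ makes the expression $f(\tau,y(s,\tau))h(s,\tau)$ appear for $\tau<s$ when $s>s_0$, where $y(s,\tau)$ is not defined on $\angle_T$; this is harmless given the continuous extension of $y$ and $h$ to $[0,T]^2$ that you already invoke (e.g.\ $\tilde y(s,\tau)=y(s,\max\{s,\tau\})$), whereas the paper sidesteps it by splitting the integrand-difference integral at $\max\{s,s_0\}$.
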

\begin{proof}[Proof of Lemma~\ref{lemma:WellConti}]
	Throughout this proof let
	$ X \colon \angle_T \to W $
	be the function which satisfies for all
	$ (s,t) \in \angle_T $  
	that 
	$ X_{s, t} = 
	\int_s^t f( \tau, y( s, \tau )) h(s, \tau) \,d \tau $.
	Observe that for all
	$ s \in [0,T] $, $ t, u \in [s,T] $ 
	with
	$ t \leq u $
	it holds that
	\begin{equation}
	\begin{split}
	\label{eq:Eq1}
	\| X_{s, u } - X_{s,t} \|_W
	&=
	\Big\| \int_t^u 
	f( \tau, y( s, \tau ) ) 
	h( s, \tau )
	\, d \tau  
	\Big\|_W
	\\
	&
	\leq
	|t-u|
	\bigg[ 
	\sup_{ (r,\tau) \in \angle_T }   
	\| f( \tau, y( r, \tau ) ) h( r, \tau )  \|_W
	\bigg] 
	.
	\end{split}
	\end{equation}
	Hence, we obtain that for all
	$ s \in [0,T] $, 
	$ t, u \in [s,T] $
	it holds that
	\begin{equation}
	\begin{split}
	\| X_{s,t} - X_{s,u} \|_W
	\leq
	| t - u |
	\bigg[
	\sup_{ ( r, \tau ) \in \angle_T }
	\| f( \tau, y ( r, \tau ) ) h( r, \tau ) \|_W
	\bigg]
	.
	\end{split}
	\end{equation}
	In addition, note that for all
	$ s, u, t \in [0, T] $ 
	with
	$ s, u \in [0,t] $
	it holds that
	\begin{equation}
	\begin{split}
	%
	&
	\| X_{s,t} - X_{u, t} \|_W
	=
	\Big\| \int_s^t 
	f( \tau, y( s, \tau ) ) 
	h( s, \tau )
	\, d \tau 
	-
	\int_u^t
	f( \tau, y( u, \tau ) )
	h( u, \tau )
	\, d \tau 
	\Big\|_W
	\\
	&
	\leq 
	\Big\|
	\int_{ \max \{ s, u \} }^t
	f( \tau, y( \max \{ s, u \}, \tau ) ) 
	h( \max \{ s, u \}, \tau ) \, d \tau 
	\\
	&
	\quad 
	-
	\int_{ \min \{ s, u \} }^t
	f( \tau, y( \min \{ s, u \}, \tau ) )
	h( \min\{s,u\}, \tau )
	\, d \tau 
	\Big\|_W
	\\
	&
	=
	\Big\|
	\int_{ \max \{ s, u \} }^t
	f( \tau, y( \max \{ s, u \}, \tau ) ) 
	h( \max \{ s, u \}, \tau ) \, d \tau 
	\\
	&
	\quad 
	-
	\Big[ 
		\int_{ \min \{ s, u \} }^{ \max \{ s, u\} } 
	f( \tau, y( \min \{ s, u \}, \tau ) )
	h( \min\{s,u\}, \tau )
	\, d \tau
	\\
	&
	\quad 
	+
	\int_{ \max \{ s, u \} }^t
	f( \tau, y( \min \{ s, u \}, \tau ) )
	h( \min\{s,u\}, \tau )
	\, d \tau
	\Big]
	\Big\|_W
	\\
	&
	\leq 
	\int_{ \max \{ s, u \} }^t
	\|
	f( \tau, y( \max \{ s, u \}, \tau ) ) 
	h( \max \{ s, u \}, \tau ) 
	-
	f( \tau, y( \min \{ s, u \}, \tau ) )
	h( \min\{s,u\}, \tau )
	\|_W 
	\, d \tau 
	\\
	&
	\quad 
	+  
	\int_{ \min \{ s, u \} }^{ \max \{ s, u\} } 
	\|
	f( \tau, y( \min \{ s, u \}, \tau ) )
	h( \min\{s,u\}, \tau )
	\|_W
	\, d \tau
	.
\end{split}
\end{equation} 
This implies that for all
$ s, u, t \in [0, T] $ 
with
$ s, u \in [0,t] $
it holds that
\begin{equation}
\begin{split}
\label{eq:Eq2}
\| X_{s,t} - X_{u, t} \|_W
&\leq  
	\int_{ \max \{ s, u \}}^t
	\| 
	f( \tau, y( s, \tau ) ) 
	h( s, \tau )
	-
	f( \tau, y( u, \tau ) )
	h( u, \tau ) 
	\|_W
	\, d \tau   
	\\
	& 
	\quad  
	+
	| s - u |
	\bigg[ 
	\sup_{ (r,\tau) \in \angle_T }   
	\| f( \tau, y( r, \tau ) ) h( r, \tau )  \|_W
	\bigg]
	.
	\end{split} 
	\end{equation}
	Combining~\eqref{eq:Eq1}
	and~\eqref{eq:Eq2} 
	assures that for all
	$ s, t, u, v \in [0,T] $ 
	with
	$ s \leq t $,
	$ u \leq v $,
	and
	$ u \leq t $ 
	it holds that
	\begin{equation}
	\begin{split}
	&
	\| X_{s,t} - X_{u, v} \|_W
	\leq
	\| X_{s,t} - X_{u, t} \|_W
	+
	\| X_{u, t} - X_{u, v} \|_W
	\\
	&
	\leq 
(
| t - v |
+
| s - u |
)
\big[ 
\sup\nolimits_{ (r,\tau) \in \angle_T }   
\| f( \tau, y( r, \tau ) ) h( r, \tau ) \|_V
\big]
\\
&
\quad 
	+ 
	\int_{ \max \{ s, u \}}^t
	\| 
	f( \tau, y( s, \tau ) ) 
	h( s, \tau )
	-
	f( \tau, y( u, \tau ) )
	h( u, \tau ) 
	\|_V
	\, d \tau 
	. 
	\end{split}
	\end{equation}
	The dominated convergence theorem hence
	completes the proof of Lemma~\ref{lemma:WellConti}.
\end{proof}
\begin{lemma}
	\label{lemma:CompactnessArgument}
	Let  $ ( V, \left \| \cdot \right \|_V ) $
	be a nontrivial $ \R $-Banach space,
	let
	$ \varepsilon, T \in (0,\infty) $,
	$ \angle_T = \{ (s, t) \in [0,T]^2 \colon s \leq t \} $,
	$ y \in \mathcal{C} ( \angle_T, V ) $,
	$ f \in \mathcal{C}^{0,1}( [0,T] \times V, V ) $,
	and
	let
	$ f_{0,1} \colon [0,T] \times V \to L( V ) $
	be the function 
	which satisfies 
	for all
	$ t \in [0,T] $, $ x \in V $ that
	$ f_{0,1}(t,x) = ( \tfrac{ \partial }{\partial x } f )(t,x) $.
	Then there exists $ \delta \in (0,\infty) $
	such that for all
	$ h \in \mathcal{C}( \angle_T, V ) $
	with
	$ \sup_{ (s, t) \in \angle_T } 
	\| h(s, t) \|_V < \delta $
	it holds that
	\begin{equation}
	\sup\nolimits_{ r \in [0,1] }
	\sup\nolimits_{ (s,t) \in \angle_T }
	\| f_{0,1}( t, y(s,t) + r h (s,t) ) 
	-
	f_{0,1}( t, y(s,t) ) \|_{L(V)} 
	< \varepsilon 
	.
	\end{equation}
\end{lemma}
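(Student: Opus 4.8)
The plan is to establish the desired $\delta$ by a uniform-continuity argument built on the compactness of $\angle_T$ (and of $[0,1]$), in the same spirit as the proofs of Lemma~\ref{lemma:Help_cont} and Lemma~\ref{lemma:Nicer_assumption}. First I would observe that it suffices to find $\delta \in (0,\infty)$ such that for all $(s,t) \in \angle_T$, $r \in [0,1]$, and $v \in V$ with $\|v\|_V \le \delta$ it holds that $\|f_{0,1}(t, y(s,t) + r v) - f_{0,1}(t, y(s,t))\|_{L(V)} < \varepsilon$; the claimed assertion for continuous $h \in \mathcal{C}(\angle_T, V)$ with $\sup_{(s,t)\in\angle_T}\|h(s,t)\|_V < \delta$ then follows immediately upon taking $v = h(s,t)$ for each $(s,t) \in \angle_T$ and passing to the supremum. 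Here it is essential to note that, since $V$ is typically infinite-dimensional, closed balls in $V$ need not be compact, so one cannot simply invoke uniform continuity of $f_{0,1}$ on a compact ``tube'' around the compact set $\{(t,y(s,t)) \colon (s,t) \in \angle_T\} \subseteq [0,T] \times V$; instead compactness is used only on the index set $\angle_T$.

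For the local step I would fix $(s_0,t_0) \in \angle_T$ and use the continuity of $f_{0,1}$ at the point $(t_0, y(s_0,t_0)) \in [0,T]\times V$ to obtain $\rho_{(s_0,t_0)} \in (0,\infty)$ such that $\|f_{0,1}(t,x) - f_{0,1}(t_0, y(s_0,t_0))\|_{L(V)} < \varepsilon/2$ for all $t \in [0,T]$, $x \in V$ with $\max\{|t-t_0|, \|x - y(s_0,t_0)\|_V\} \le \rho_{(s_0,t_0)}$. Then, using the continuity of $y$ at $(s_0,t_0)$, I would choose $\sigma_{(s_0,t_0)} \in (0, \rho_{(s_0,t_0)}]$ such that $\|y(s,t) - y(s_0,t_0)\|_V < \rho_{(s_0,t_0)}/2$ for all $(s,t) \in \angle_T$ with $\max\{|s-s_0|, |t-t_0|\} < \sigma_{(s_0,t_0)}$. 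As $\angle_T$ is compact, the relatively open cover $\{(s,t) \in \angle_T \colon \max\{|s-s_0|,|t-t_0|\} < \sigma_{(s_0,t_0)}\}$, $(s_0,t_0) \in \angle_T$, admits a finite subcover indexed by points $(s_1,t_1),\dots,(s_n,t_n) \in \angle_T$, and I would set $\delta := \tfrac{1}{2}\min\{\rho_{(s_1,t_1)},\dots,\rho_{(s_n,t_n)}\} \in (0,\infty)$.

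Finally, for any $(s,t) \in \angle_T$, $r \in [0,1]$, and $v \in V$ with $\|v\|_V \le \delta$, I would pick $i \in \{1,\dots,n\}$ with $\max\{|s-s_i|,|t-t_i|\} < \sigma_{(s_i,t_i)}$; then $|t-t_i| \le \rho_{(s_i,t_i)}$, $\|y(s,t) - y(s_i,t_i)\|_V < \rho_{(s_i,t_i)}/2$, and hence $\|y(s,t)+rv - y(s_i,t_i)\|_V \le \rho_{(s_i,t_i)}/2 + \delta \le \rho_{(s_i,t_i)}$, so the defining property of $\rho_{(s_i,t_i)}$ applied to both $(t,y(s,t))$ and $(t,y(s,t)+rv)$ together with the triangle inequality gives $\|f_{0,1}(t,y(s,t)+rv) - f_{0,1}(t,y(s,t))\|_{L(V)} < \varepsilon/2 + \varepsilon/2 = \varepsilon$; taking the supremum over $r \in [0,1]$ and $(s,t) \in \angle_T$ concludes. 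The only delicate point, and the reason the argument is not a one-line appeal to uniform continuity, is exactly that compactness is available on $\angle_T$ but not on balls in $V$, which forces the finite subcover to be taken over $\angle_T$ rather than over a neighbourhood of the image curve in $[0,T]\times V$.
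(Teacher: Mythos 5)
Your proof is correct and takes essentially the same route as the paper's: a finite subcover obtained from compactness, combined with an $\varepsilon/2$ triangle-inequality argument around finitely many centres at which $f_{0,1}$ oscillates little, with $\delta$ chosen as half the minimal radius. The only immaterial difference is that the paper covers the compact image $[0,T]\times y(\angle_T)\subseteq[0,T]\times V$ directly, whereas you cover the compact domain $\angle_T$ and transport the cover through the continuity of $y$.
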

\begin{proof}[Proof of Lemma~\ref{lemma:CompactnessArgument}] 
	Throughout this proof let
	$ C = y( \angle_T ) = \{ y(s,t) \in V \colon (s,t) \in \angle_T \} $.
	Note that the assumption that
	$ f \in \mathcal{C}^{0,1}( [0,T] \times V, V ) $
	ensures that 
	there exists a function
	$ \delta \colon [0,T] \times V \to  (0,\infty) $
	such that for all
	$ (t, x), (\tau, \xi) \in [0,T] \times V $
	with
	$ | t - \tau | + \| x - \xi \|_V < \delta_{t,x} $
	it holds that
	\begin{equation}
	\label{eq:cont_cri}
	\| f_{0,1}( \tau, \xi ) - f_{0,1}(t,x) \|_{L(V)}
	< 
	\tfrac{ \varepsilon }{ 2 }
	.
	\end{equation}
	Moreover, observe that
	\begin{equation}
	[0,T]\times C \subseteq  
	\cup_{ (t,x) \in [0,T] \times C }
	\{ (\tau, \xi) \in [0,T] \times V \colon 
	| t - \tau | + \| x - \xi \|_V < \tfrac{1}{2} \delta_{t,x} \}
	.
	\end{equation}
	\sloppy
The fact that
	$ [0,T] \times C $ is a compact set
	therefore implies that
	there exist 
	$ n \in \N $, 
	$ (t_1, x_1), \ldots, (t_n, x_n) \in [0,T] \times C $
	such 
	that
	\begin{equation}
	[0,T] \times C
	\subseteq 
	\cup_{i=1}^n
	\{ (\tau, \xi) \in [0,T] \times V \colon 
	| t_i - \tau | + \| x_i - \xi \|_V < \tfrac{1}{2}  \delta_{t_i, x_i}  \}
	.
	\end{equation}
	This and~\eqref{eq:cont_cri}
	show that
	there exist 
	$ n \in \N $, 
	$ (t_1, x_1), \ldots, (t_n, x_n) \in [0,T] \times C $
	such that
	for all 
	$ (s,t) \in \angle_T $,
	$ r \in [0,1] $,
	$ h \in \mathcal{C}( \angle_T, V) $
	with
	$ \sup_{ (u,\tau) \in \angle_T } 
	\| h(u, \tau) \|_V <
	\min_{i \in [1,n]\cap \N } 
	\nicefrac{ \delta_{ t_i, x_i } }{2} $
	it holds that
	\begin{equation}
	\begin{split}
	&
	\| 
	f_{0,1}( t, y(s,t) + r h (s, t) ) 
	-
	f_{0,1}(t, y(s,t) ) 
	\|_{L(V)}
	\\
	&
	\leq
	\min_{ i \in [1,n] \cap \N }
	\big[
	\| 
	f_{0,1}(t, y(s,t) + r h(s,t) ) 
	-
	f_{0,1}(t_i, x_i)
	\|_{L(V)}
	\\
	&
	\quad 
	+
	\| 
	f_{0,1}(t_i, x_i)
	-
	f_{0,1}(t, y(s,t))
	\|_{L(V)}
	\big]
	\\
	&
	\leq
	\tfrac{ \varepsilon }{ 2 } 
	+
	\tfrac{ \varepsilon }{ 2 }
	=
	\varepsilon
	.
	\end{split}
	\end{equation}
	The proof of 
	Lemma~\ref{lemma:CompactnessArgument} is thus completed.
\end{proof}
\begin{lemma}
	\label{lemma:help_lemma}
	Let  $ ( V, \left \| \cdot \right \|_V ) $
	be a nontrivial $ \R $-Banach space,
	let $ T \in (0, \infty) $, 
	$ \angle_T = \{ (s, t) \in [0,T]^2 \colon s \leq t \} $,
	$ f = ( f(t, x) )_{ (t, x) \in [0,T] \times V } \in \mathcal{C}^{0,1}([0,T] \times V, V  ) $,
	$ F \colon \mathcal{C}(\angle_T, V)
	\to
	\mathcal{C}( \angle_T, V) $
	satisfy for all
	$ ( s, t ) \in \angle_T $,
	$ y \in \mathcal{C}(\angle_T, V) $ 
	that
	$ (F(y))(s, t) = \int_s^t f (\tau, y( s, \tau ) ) \, d \tau $,
	and let 
	$ f_{0,1} \colon [0,T] \times V \to L( V ) $
	be the function 
	which satisfies 
	for all
	$ t \in [0,T] $, $ x \in V $ that
	$ f_{0,1}(t,x) = ( \tfrac{ \partial }{\partial x } f )(t,x) $.
	Then it holds 
	for all 
	$ y, h \in \mathcal{C}( \angle_T, V ) $,
	$ (s,t) \in \angle_T $ 
	that
	$ F \in \mathcal{C}^1( \mathcal{C}( \angle_T, V ), 
	\mathcal{C}( \angle_T, V ) ) $
	and  
	\begin{equation}
	(F'(y) h)(s,t)
	=
	\int_s^t 
	f_{0,1}(\tau, y(s, \tau) ) h(s, \tau ) \, d \tau .
	\end{equation}
\end{lemma}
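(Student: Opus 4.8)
The plan is to exhibit the Fr\'echet derivative of $F$ explicitly and then check continuous dependence on the base point. For every $y \in \mathcal{C}(\angle_T, V)$ let $L_y \colon \mathcal{C}(\angle_T, V) \to \mathcal{C}(\angle_T, V)$ be given by $(L_y h)(s,t) = \int_s^t f_{0,1}(\tau, y(s,\tau)) h(s,\tau) \, d\tau$ for $h \in \mathcal{C}(\angle_T, V)$ and $(s,t) \in \angle_T$, and write $\|h\|_{\infty} := \sup_{(s,t) \in \angle_T} \| h(s,t) \|_V$. The claim is that $F$ is Fr\'echet differentiable with $F'(y) = L_y$ and that $y \mapsto L_y$ is continuous. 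First I would verify that each $L_y$ is a well-defined bounded linear operator on $\mathcal{C}(\angle_T, V)$: for fixed $y, h \in \mathcal{C}(\angle_T, V)$, Lemma~\ref{lemma:WellConti} (applied with $U = W = V$, $f = f_{0,1}$, $y = y$, $h = h$) guarantees that $(s,t) \mapsto (L_y h)(s,t)$ is an element of $\mathcal{C}(\angle_T, V)$; linearity in $h$ is immediate; and, since $[0,T] \times y(\angle_T)$ is compact and $f_{0,1}$ is continuous, the number $c_y := \sup_{(s,\tau) \in \angle_T} \| f_{0,1}(\tau, y(s,\tau)) \|_{L(V)}$ is finite, whence $\| L_y h \|_{\infty} \leq T \, c_y \, \| h \|_{\infty}$ and therefore $L_y \in L(\mathcal{C}(\angle_T, V), \mathcal{C}(\angle_T, V))$.

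Next I would establish that $F$ is Fr\'echet differentiable at an arbitrary $y \in \mathcal{C}(\angle_T, V)$ with $F'(y) = L_y$. For $h \in \mathcal{C}(\angle_T, V)$ and $(s,t) \in \angle_T$, applying the fundamental theorem of calculus (Lemma~\ref{lemma:FundamentaTheoremOfCalculus}) together with the chain rule to $[0,1] \ni r \mapsto f(\tau, y(s,\tau) + r h(s,\tau)) \in V$ yields
\begin{equation}
\begin{split}
& (F(y+h))(s,t) - (F(y))(s,t) - (L_y h)(s,t) \\
& = \int_s^t \int_0^1 \big[ f_{0,1}(\tau, y(s,\tau) + r h(s,\tau)) - f_{0,1}(\tau, y(s,\tau)) \big] h(s,\tau) \, dr \, d\tau ,
\end{split}
\end{equation}
and hence, pulling the norm inside the Bochner integrals and using $t - s \leq T$,
\begin{equation}
\begin{split}
& \sup\nolimits_{(s,t) \in \angle_T} \big\| (F(y+h))(s,t) - (F(y))(s,t) - (L_y h)(s,t) \big\|_V \\
& \leq T \Big[ \sup\nolimits_{r \in [0,1],\, (s,t) \in \angle_T} \| f_{0,1}(t, y(s,t) + r h(s,t)) - f_{0,1}(t, y(s,t)) \|_{L(V)} \Big] \| h \|_{\infty} .
\end{split}
\end{equation}
Lemma~\ref{lemma:CompactnessArgument} (with the present $y$ and an arbitrary $\varepsilon \in (0,\infty)$) shows that the bracketed supremum is $< \varepsilon$ as soon as $\| h \|_{\infty}$ is small enough, so the right-hand side is $o(\| h \|_{\infty})$ as $h \to 0$ in $\mathcal{C}(\angle_T, V)$; this is precisely Fr\'echet differentiability of $F$ at $y$ with $F'(y) = L_y$.

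Finally I would prove that $\mathcal{C}(\angle_T, V) \ni y \mapsto F'(y) \in L(\mathcal{C}(\angle_T, V), \mathcal{C}(\angle_T, V))$ is continuous. For $y, z, h \in \mathcal{C}(\angle_T, V)$ and $(s,t) \in \angle_T$ one estimates, again by pulling the norm inside the integral, $\| (F'(y) h - F'(z) h)(s,t) \|_V \leq T \big[ \sup_{(u,\tau) \in \angle_T} \| f_{0,1}(\tau, y(u,\tau)) - f_{0,1}(\tau, z(u,\tau)) \|_{L(V)} \big] \| h \|_{\infty}$, so that $\| F'(y) - F'(z) \|_{L(\mathcal{C}(\angle_T, V), \mathcal{C}(\angle_T, V))} \leq T \sup_{(s,t) \in \angle_T} \| f_{0,1}(t, y(s,t)) - f_{0,1}(t, z(s,t)) \|_{L(V)}$, and a further application of Lemma~\ref{lemma:CompactnessArgument} (with $r = 1$ and with $h$ there replaced by $z - y$) forces the right-hand side to $0$ as $z \to y$. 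Combining the three steps gives $F \in \mathcal{C}^1(\mathcal{C}(\angle_T, V), \mathcal{C}(\angle_T, V))$ together with the stated formula for $F'$. The step with the least routine content is the last one, the continuity of $y \mapsto F'(y)$, but it too reduces at once to Lemma~\ref{lemma:CompactnessArgument}; I do not expect a genuine obstacle, only the bookkeeping of matching the hypotheses of Lemmas~\ref{lemma:WellConti} and~\ref{lemma:CompactnessArgument} (noting in particular that $(s,\tau) \in \angle_T$ whenever $s \leq \tau \leq t$ and $(s,t) \in \angle_T$) and of interchanging $\int_s^t$ with $\int_0^1$.
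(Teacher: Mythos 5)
Your proposal is correct and follows essentially the same route as the paper: the same first-order Taylor identity $f(\tau,y+h)-f(\tau,y)=\int_0^1 f_{0,1}(\tau,y+rh)h\,dr$, the same bound $T\,\|h\|_\infty\,\sup_{r,(s,\tau)}\|f_{0,1}(\tau,y+rh)-f_{0,1}(\tau,y)\|_{L(V)}$, an appeal to Lemma~\ref{lemma:CompactnessArgument} for both the differentiability and the continuity of $y\mapsto F'(y)$, and Lemma~\ref{lemma:WellConti} to confirm that the candidate derivative maps into $\mathcal{C}(\angle_T,V)$. No gaps.
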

\begin{proof}[Proof of Lemma~\ref{lemma:help_lemma}]
	Note that for all
	$ y, h \in \mathcal{C}( \angle_T, V) $,
	$ (s, \tau) \in \angle_T $
	it holds that
	\begin{equation}
	\begin{split}
	f( \tau, y( s, \tau ) + h ( s, \tau  ) ) - f ( \tau, y (s,\tau) )
	=
	\int_0^1  
f_{0,1}
	( \tau, y( s, \tau ) + r h(s, \tau ) ) 
	h( s, \tau )
	\, dr
	.
	\end{split}
	\end{equation}
	This ensures that for all
	$ y, h \in \mathcal{C}( \angle_T, V ) $,
	$ (s,t) \in \angle_T $ 
	it holds that
	\begin{equation}
	\begin{split}
	&
	( F ( y + h ) )(s, t) - ( F( y ) )(s, t) - \int_s^t 
	f_{0,1}
	 (\tau, y(s, \tau) ) 
	h( s, \tau ) \, d\tau
	\\
	&
	=
	\int_s^t
	\big[ 
	f( \tau, y(s, \tau) + h( s, \tau ) ) 
	- 
	f( \tau, y( s, \tau ) )
	- 
	f_{0,1}
	(\tau, y( s, \tau ) ) 
	h( s, \tau )
	\big]
	\,
	d\tau
	\\
	&
	=
	\int_s^t
	\bigg(
	\int_0^1
	\big[  
f_{0,1}
	( \tau, y( s, \tau ) + r h( s, \tau ) )
	-  
f_{0,1}
	( \tau, y( s, \tau )  )
	\big]
	h( s, \tau )
	\, dr
	\bigg)
	\, d\tau
	.
	\end{split}
	\end{equation}
	Hence, we obtain for all
	$ y, h \in \mathcal{C}( \angle_T, V) $ 
	that
	\begin{equation}
	\begin{split}
	\label{eq:rought_limit_estimate}
	&
	\sup_{ (s,t) \in \angle_T }
	\bigg\| ( F(y +h ) ) (s, t) - ( F(y ) )(s, t)
	- 
	\int_s^t   
f_{0,1}
	( \tau, y(s, \tau)  )
	h(s, \tau)
	\, d \tau 
	\bigg\|_V
	\leq
	\sup_{ (s,t) \in \angle_T }
	\| h(s, t) \|_V  
	\\
	&
	\cdot
	\sup_{ (s, t) \in \angle_T }
	\bigg[
	\int_s^t
	\bigg(
	\int_0^1
	\big\|  
f_{0,1}
	( \tau, y(s, \tau) + r h(s, \tau) )
	-  
f_{0,1}
	(\tau, y(s, \tau) )
	\big\|_{ L( V ) }
	\,
	dr
	\bigg)
	\, d \tau
	\bigg]
	\\
	&
	\leq
	T
	\bigg[ 
	\sup_{ (s,t) \in \angle_T }
	\| h(s, t) \|_V 
	\bigg] 
	\bigg[
	\sup_{ r \in [0,1] }
	\sup_{ (s, \tau) \in \angle_T } 
	\|  
f_{0,1}
	( \tau, y(s, \tau) + r h(s, \tau) )
	-  
f_{0,1}
	(\tau, y(s, \tau) )
	\|_{ L( V ) } 
	\bigg]
	.
	\end{split}
	\end{equation}
	Moreover, observe that
	Lemma~\ref{lemma:CompactnessArgument}
	(with
	$ V = V $,
	$ \varepsilon = \varepsilon $,
	$ T = T $,
	$ y = y $,
	$ f = f $
	for 
	$ y \in \mathcal{C}( \angle_T, V ) $,
	$ \varepsilon \in (0,\infty) $
	in the notation of 
	Lemma~\ref{lemma:CompactnessArgument}) 
	shows that
	for all
	$ y \in \mathcal{C}( \angle_T, V ) $,
	$ \varepsilon \in (0, \infty) $
	there exists 
	$ \delta \in ( 0, \infty) $ 
	such that for all
	$ h \in \mathcal{C}( \angle_T, V ) $
	with
	$ \sup_{ (u,\tau) \in \angle_T } 
	\| h(u, \tau) \|_V < \delta $
	it holds that
	\begin{equation} 
	\begin{split} 
	\label{eq:sup_estimate}
	\sup\nolimits_{ r \in [0,1] }
	\sup\nolimits_{ (s, \tau) \in \angle_T } 
	\|  
f_{0,1}
	( \tau, y(s, \tau) + r h(s, \tau) )
	-   
f_{0,1}
	(\tau, y(s, \tau) )
	\|_{ L( V ) } 
	<
	\varepsilon
	.
	\end{split}
	\end{equation}
	Combining this with~\eqref{eq:rought_limit_estimate}
	implies that
	for all
	$ y \in \mathcal{C}( \angle_T, V ) $
	it holds that
	\begin{equation}
	\limsup_{ (\mathcal{C}( \angle_T, V ) \backslash \{0\} ) \ni h \to 0 }
	\tfrac{
		\sup_{ (s,t) \in \angle_T }
		\| ( F(y +h ) ) (s,t) - ( F(y ) )(s,t)
		- \int_s^t   
f_{0,1}
		( \tau, y(s, \tau)  )
		\,
		h(s, \tau)
		\, d \tau 
		\|_V
	}
	{
		\sup_{(s,t) \in \angle_T } \| h(s,t) \|_V
	}
	=
	0.
	\end{equation}
	Lemma~\ref{lemma:WellConti}
	(with
	$ U = V $,
	$ V = V $,
	$ W = V $,
	$ T = T $,
	$ f = ( [0,T] \times V \ni (t,x) \mapsto  
	f_{0,1}(t,x) \in L(V) ) $,  
	$ y = y $,
	$ h = h $
	in the notation of Lemma~\ref{lemma:WellConti})
	therefore proves
	that 
	$ F $ is Fr\'echet differentiable and that
	for all
	$ y, h \in \mathcal{C}( \angle_T, V ) $, 
	$ ( s, t ) \in \angle_T $
	it holds that
	\begin{equation}
	( F'(y)h)(s, t)
	=
	\int_s^t 
f_{0,1}
	( \tau, y( s, \tau ) ) \, h(s, \tau) \, d\tau.
	\end{equation}
	This ensures that for all
	$ y, g, h \in \mathcal{C}( \angle_T, V ) $
	it holds
	that
	\begin{equation}
	\begin{split}
	&
	\sup_{ (s,t) \in \angle_T }
	\| (F'(y+h)g)(s, t) - (F'(y)g)(s, t) \|_V
	\\
	&
	\leq
	\sup_{ (s,t) \in \angle_T }
	\int_s^t
	\|  
f_{0,1}
	(\tau, y(s, \tau) + h(s, \tau))
	\, g(s, \tau)
	-  
f_{0,1}
	(\tau, y(s, \tau) )
	\, g(s, \tau)
	\|_V
	\,
	d\tau
	\\
	&
	\leq
	T
	\sup_{ (s, \tau) \in \angle_T}
	\|  
f_{0,1}
	(\tau, y(s, \tau) + h(s, \tau)) 
	-  
f_{0,1}
	(\tau, y(s, \tau) ) 
	\|_{ L(V) } 
	\sup_{(s, \tau ) \in \angle_T}
	\| g( s, \tau ) \|_V
	.
	\end{split}
	\end{equation}
	Combining
	this with~\eqref{eq:sup_estimate}
	shows that 
	for all
	$ y \in \mathcal{C} ( \angle_T, V ) $
	it holds that
	\begin{equation}
	\limsup\nolimits_{ \mathcal{C}( \angle_T, V ) \ni h \to 0 } 
	\| F'(y+h) -  F'(y)  \|_{ L( \mathcal{C}(\angle_T, V), \mathcal{C}(\angle_T, V) ) }
	=
	0.
	\end{equation}
	The proof of Lemma~\ref{lemma:help_lemma}
	is thus completed.
\end{proof}
\begin{lemma} 
	\label{lemma:converse_chain_rule}	
	Let 
	$ ( X, \left \| \cdot \right \|_X ) $,
	$ ( Y, \left \| \cdot \right \|_Y ) $,
	and
	$ ( Z, \left \| \cdot \right \|_Z ) $
	be 
	nontrivial
	$ \R $-Banach spaces, 
	let  
	$ f \in \mathcal{C}(X, Y) $,
	$ g \in \mathcal{C}^1(Y, Z) $,
	assume that
	$ g \circ f \in \mathcal{C}^1(X,Z) $,
	and
	assume  
	for all $ x \in X $  that
	$ g'( f ( x ) ) $ is bijective
	and that
	$ [ g'(f(x)) ]^{-1} \in L(Z,Y) $.
	Then 
	\begin{enumerate}[(i)]
	\item \label{item:Differentiable} it holds that
	$ f \in \mathcal{C}^1(X,Y) $
	and
	\item \label{item:chainRule} it holds for all $ x \in X $
	that
$ f'(x)
	=
	[ g'( f( x ) ) ]^{-1}
	( g \circ f )'( x ) $. 
\end{enumerate}
\end{lemma}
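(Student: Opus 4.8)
The plan is to guess the derivative of $f$ from item~\eqref{item:chainRule}, i.e.\ to set $T\colon X\to L(X,Y)$ with $T(x)=[g'(f(x))]^{-1}(g\circ f)'(x)$ for $x\in X$ (well defined since each $g'(f(x))$ is a bijection with $[g'(f(x))]^{-1}\in L(Z,Y)$), and then to show two things: (a) $T$ is continuous, and (b) $f$ is Fr\'echet differentiable at every point with $f'=T$. Together these give items~\eqref{item:Differentiable} and~\eqref{item:chainRule}.

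First I would prove the continuity of $T$. The map $X\ni x\mapsto f(x)\in Y$ is continuous by hypothesis, the map $Y\ni y\mapsto g'(y)\in L(Y,Z)$ is continuous since $g\in\mathcal{C}^1(Y,Z)$, the map $X\ni x\mapsto(g\circ f)'(x)\in L(X,Z)$ is continuous since $g\circ f\in\mathcal{C}^1(X,Z)$, and the bilinear composition $L(Z,Y)\times L(X,Z)\ni(A,B)\mapsto AB\in L(X,Y)$ is continuous. It then suffices to recall the standard Neumann-series fact that on the open set of bijective elements of $L(Y,Z)$ with bounded inverse the inversion map $A\mapsto A^{-1}$ is continuous: if $\|B-A\|_{L(Y,Z)}\|A^{-1}\|_{L(Z,Y)}<1$, then $B$ is bijective, $B^{-1}\in L(Z,Y)$, and $\|B^{-1}-A^{-1}\|_{L(Z,Y)}\to0$ as $B\to A$ (here completeness of $Y$ is what is used). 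Composing these continuous maps yields $T\in\mathcal{C}(X,L(X,Y))$.

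Next I would fix $x_0\in X$, write $y_0=f(x_0)$, $A=g'(y_0)\in L(Y,Z)$, and abbreviate $k(h)=f(x_0+h)-f(x_0)$; note $\lim_{h\to0}k(h)=0$ since $f$ is continuous. The differentiability of $g\circ f$ at $x_0$ and of $g$ at $y_0$ give, as $h\to0$,
\begin{equation}
g(f(x_0+h))-g(f(x_0))=(g\circ f)'(x_0)h+o(\|h\|_X)
\end{equation}
and
\begin{equation}
g(f(x_0+h))-g(f(x_0))=A\,k(h)+o(\|k(h)\|_Y).
\end{equation}
Subtracting and applying $A^{-1}\in L(Z,Y)$ produces
\begin{equation}
k(h)-T(x_0)h=A^{-1}\bigl(o(\|h\|_X)-o(\|k(h)\|_Y)\bigr),
\end{equation}
whence $\|k(h)-T(x_0)h\|_Y\le\|A^{-1}\|_{L(Z,Y)}\bigl(\|o(\|h\|_X)\|_Z+\|o(\|k(h)\|_Y)\|_Z\bigr)$, and the first term on the right is already $o(\|h\|_X)$.

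The hard part will be to control the term $o(\|k(h)\|_Y)$, for which a bootstrap is needed since a priori $f$ is only continuous, not Lipschitz. Writing $o(\|k(h)\|_Y)=\varepsilon(h)\|k(h)\|_Y$ with $\lim_{h\to0}\varepsilon(h)=0$ (legitimate because $k(h)\to0$), the displayed identity gives $\|k(h)\|_Y\le\|T(x_0)\|_{L(X,Y)}\|h\|_X+\|A^{-1}\|_{L(Z,Y)}\bigl(\|o(\|h\|_X)\|_Z+\varepsilon(h)\|k(h)\|_Y\bigr)$; choosing $h$ small enough that $\|A^{-1}\|_{L(Z,Y)}\varepsilon(h)\le\tfrac12$ and $\|o(\|h\|_X)\|_Z\le\|h\|_X$ lets us absorb the last term, yielding a constant $c\in(0,\infty)$ with $\|k(h)\|_Y\le c\|h\|_X$ for all sufficiently small $h$. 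Feeding this local Lipschitz bound back, $\|o(\|k(h)\|_Y)\|_Z=\varepsilon(h)\|k(h)\|_Y\le c\,\varepsilon(h)\|h\|_X=o(\|h\|_X)$, so $\|k(h)-T(x_0)h\|_Y=o(\|h\|_X)$. This shows that $f$ is Fr\'echet differentiable at $x_0$ with $f'(x_0)=T(x_0)$, establishing item~\eqref{item:chainRule}; combined with the continuity of $T$ proved above it gives $f\in\mathcal{C}^1(X,Y)$, which is item~\eqref{item:Differentiable}, and completes the proof.
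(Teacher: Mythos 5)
Your proposal is correct and follows essentially the same route as the paper: both expand $g(f(x+h))-g(f(x))$ in two ways (via the differentiability of $g$ at $f(x)$ and of $g\circ f$ at $x$), subtract, apply $[g'(f(x))]^{-1}$, and then bootstrap a local bound $\|f(x+h)-f(x)\|_Y\leq c\|h\|_X$ by absorbing the $o(\|f(x+h)-f(x)\|_Y)$ term, exactly as in the paper's estimates involving $o_1$ and $o_2$. The final step deducing continuity of $f'$ from continuity of operator inversion is also the same.
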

\begin{proof}[Proof of Lemma~\ref{lemma:converse_chain_rule}]
	Throughout this proof 
	let $ o_1, o_2 \colon X \times X \to Z $
	be the functions which satisfy 
	for all $ x, h \in X $
	that
	\begin{equation}
	\begin{split}
	\label{eq:first_smooth}
	o_1( x,  h )
	=
	g( f( x + h ) ) - g(f(x))
	-
	g' ( f(x) )
	( f(x+h) - f(x) ) 
	\end{split}
	\end{equation}
	and
	\begin{equation}
	\begin{split}
	\label{eq:second_smooth}
	o_2( x, h)
	=
	g(f(x+h))
	-
	g(f(x))
	-
	(g \circ f)'(  x ) h 
	.
	\end{split}
	\end{equation}
	Observe that~\eqref{eq:first_smooth}
	and~\eqref{eq:second_smooth}
	imply
	for all $ x, h \in X $ that
	\begin{equation}
	\begin{split}
	\label{eq:some_ide}
	&
	f(x + h ) - f(x)
	=
	[ g'( f ( x ) ) ]^{-1}
	\big(
	( g \circ f )'(x) h + o_2( x, h)
	-
	o_1( x, h )
	\big)
	\\
	&
	=
	[ g'( f ( x ) ) ]^{-1}
	( g \circ f)'( x ) h
	+ 
	[ g'( f ( x ) ) ]^{-1}
	o_2( x, h)
	-
	[ g'( f ( x ) ) ]^{-1}
	o_1( x, h )
	.
	\end{split}
	\end{equation}
	Moreover, note that
	the fact that $ f $ is continuous 
	and the assumption that 
	$ g $ is differentiable  
	assure 
	that for every $ x \in X $ 
	there exists a function
	$ w \colon X \to [0,\infty) $
	such that
	for every $ h \in X $ it holds that
	$ \limsup_{ X \ni u \to 0 }  w(u) = 0 $
	and
	\begin{equation}
	\label{eq:diff_condition} 
	\| o_1( x, h ) \|_Z = w(h) \cdot \|f(x + h)- f(x) \|_Y 
	.
	\end{equation}
	This shows that for every $ x \in X $ there 
	exists $ \delta \in (0,\infty) $
	such that for all
	$ h \in X $
	with $ \| h \|_X < \delta $
	it holds that
	\begin{equation}
	\|
	[ g'( f ( x ) ) ]^{-1}
	\|_{L(Z, Y)} 
	\| o_1( x, h ) \|_Z
	\leq
	\tfrac{1}{2}
	\| f(x+h) - f(x) \|_Y
	.
	\end{equation}
	Equation~\eqref{eq:some_ide}
	hence
	proves that
	for every $ x \in X $
	there exists $ \delta \in (0,\infty) $
	such that
	for all
	$ h \in X $
	with
	$ \| h \|_X < \delta $
	it holds that
	\begin{equation}
	\begin{split}
	&
	\| f(x + h ) - f(x) \|_Y
	\leq
	\|
	[ g'( f ( x ) ) ]^{-1}
	( g \circ f)'( x )
	\|_{L( X, Y )}
	\| h \|_X
	\\
	&
	+ 
	\|
	[ g'( f ( x ) ) ]^{-1}
	\|_{L(Z, Y)}
	\|
	o_2( x, h)
	\|_Z
	+
	\tfrac{1}{2} 
	\| f(x+h) - f(x) \|_Y
	.
	\end{split}
	\end{equation}
	Therefore, we establish that
	for every $ x \in X $
	there exists $ \delta \in (0,\infty) $
	such that for all
	$ h \in X $ with
	$ \| h \|_X < \delta $
	it holds that
	\begin{equation}
	\begin{split}
	\label{eq:use_later}
	&
	\tfrac{
	\| f(x + h ) - f(x) \|_Y
}{2}
	%
	\leq 
	\|
	[ g'( f ( x ) ) ]^{-1}
	( g \circ f)'( x )
	\|_{L(X, Y)}
	\| h \|_X
	+  
	\|
	[ g'( f ( x ) ) ]^{-1}
	\|_{L(Z, Y)}
	\|
	o_2( x, h )
	\|_Z
	.
	\end{split}
	\end{equation}
	Furthermore, note that
	the fact that 
	\begin{equation}
	\label{eq:diff_condition2}
	\forall  \, x \in X \colon
	\limsup_{( X \backslash \{0\} ) \ni h \to 0} 
	\tfrac{ \| o_2( x, h ) \|_Z }{ \| h \|_X } 
	=
	0
	\end{equation}
	ensures that for every $ x \in X $ 
	there 
	exists $ \delta \in (0,\infty) $
	such that for all
	$ h \in X $
	with $ 0 < \| h \|_X <  \delta $
	it holds that
	\begin{equation}
	\tfrac{
		\|
		[ g'( f ( x ) ) ]^{-1}
		\|_{L(Z, Y)}
		\|
		o_2( x, h )
		\|_Z 
	}
	{ \| h \|_X}
	\leq
	\tfrac{1}{2} 
	.
	\end{equation}
	Equation~\eqref{eq:use_later}
	hence proves that 
	for every $ x \in X $
	there exists $ \delta \in (0,\infty) $
	such that
	for all
	$ h \in X $
	with $ 0 < \| h \|_X < \delta $
	it holds that
	\begin{equation}
	\begin{split}
	\tfrac{ 
		\| f(x + h ) - f(x) \|_Y
	}
	{\| h \|_X}
	&
	\leq 
	2
	\|
	[ g'( f ( x ) ) ]^{-1}
	( g \circ f)'( x )
	\|_{L(X, Y)} 
	+  
	2
	\tfrac{
		\|
		[ g'( f ( x ) ) ]^{-1}
		\|_{L(Z, Y)}
		\|
		o_2( x, h )
		\|_Z 
	}
	{ \| h \|_X}
	\\
	&
	\leq
	2
	\|
	[ g'( f ( x ) ) ]^{-1}
	( g \circ f)'( x )
	\|_{L(X, Y)} 
	+
	1
	. 
	\end{split}
	\end{equation}
	This 
	shows that 
	for every $ x \in X $
	there exists $ \delta \in (0,\infty) $
	such that for all
	$ h \in \{ y \in X \colon f(x+y) \neq f(x) \} $
	with $ 0 < \| h \|_X < \delta $
	it holds that
	\begin{equation}
	\begin{split}
	&
	\tfrac{ \| o_1( x, h ) \|_Z }{ \|f(x+h)- f(x) \|_Y }
	=
	\tfrac{ \| o_1( x, h ) \|_Z }{ \| h \|_X}
	\cdot
	\tfrac{ \| h \|_X }{ \|f(x+h)- f(x) \|_Y }
	\geq
	\tfrac{ \| o_1( x, h ) \|_Z }{ \| h \|_X }
	\cdot
	\tfrac{ 1 }
	{ 
		2
		\|
		[ g'( f ( x ) ) ]^{-1}
		( g \circ f)'( x )
		\|_{L( X, Y)} 
		+
		1 
	}
	.
	\end{split}
	\end{equation}
	Combining this with~\eqref{eq:diff_condition}
	implies that for every $ x \in X $ it holds that
	\begin{equation}
	\label{eq:gives_corollary}
	\limsup_{ ( X \backslash \{0\} ) \ni h \to 0 }
	\tfrac{ \| o_1( x, h ) \|_Z}{ \| h \|_X }
	=
	0.
	\end{equation}
	Equation~\eqref{eq:some_ide}
	therefore
	establishes
	that for 
	every $ x \in X $ 
	there exists $ \delta \in (0,\infty) $ such that 
	for all
	$ h \in X $
	with
	$ 0 < \| h \|_X < \delta $
	it holds that
	\begin{equation}
	\begin{split}
	&
	\tfrac{
		\| f(x + h ) 
		- 
		f(x)  
		-
		[ g'( f ( x ) ) ]^{-1} 
		( g \circ f )'(x) h  
		\|_Y
	}
	{ \| h \|_X }
	=
	\tfrac{
		\| 
		[ g'( f ( x ) ) ]^{-1} 
		( o_2( x, h )
		-
		o_1( x, h ) ) 
		\|_Y
	}
	{
		\| h \|_X }
	\\
	&	 
	\leq
	\| 
	[ g'( f ( x ) ) ]^{-1} 
	\|_{L(Z, Y)}
	\tfrac{ 
		\| 
		o_2( x, h )
		-
		o_1( x, h )  
		\|_Z
	}
	{
		\| h \|_X }
	\leq
	\| 
	[ g'( f ( x ) ) ]^{-1} 
	\|_{L(Z, Y)}
	\Big[
	\tfrac{ 
		\| o_2( x, h ) \|_Z
	}
	{ 
		\| h \|_X
	}
	+
	\tfrac{ \| 
		o_1( x, h )  
		\|_Z
	}
	{
		\| h \|_X }
	\Big]
	.
	\end{split}
	\end{equation}
	This, \eqref{eq:diff_condition2},
	and~\eqref{eq:gives_corollary} 
	demonstrate
	that $ f $ is differentiable 
	and that
	item~\eqref{item:chainRule}  
	holds.
	The fact that
	$ \{ B \in L( Y, Z ) \colon B \text{ is invertible} \}
	\ni C \mapsto C^{-1} 
	\in \{ B \in L( Z, Y ) \colon B \text{ is invertible} \} $
	is continuous
	(cf., e.g., Deitmar \& Echterhoff~\cite[Lemma~2.1.5]{DeitmarEchterhoff2014})
	and
	the fact that
	$ X \ni x \mapsto g'(f(x)) \in L(Y,Z) $
	is continuous
	assure that
	$ X \ni x \mapsto [ g'(f(x)) ]^{-1} \in L(Z,Y) $
	is continuous.
	Item~\eqref{item:chainRule} hence establishes item~\eqref{item:Differentiable}.
	The proof of Lemma~\ref{lemma:converse_chain_rule}
	is thus completed.
\end{proof}
\begin{lemma}
	\label{lemma:existence_uniqueness}
	Let $ (V, \left\| \cdot \right\|_V) $ 
	be an $ \R $-Banach space 
	and let
	$ T \in (0,\infty) $, 
	$ \angle_T = \{ (s, t) \in [0,T]^2 \colon s \leq t \} $,
	$ \phi \in \mathcal{C}( \angle_T, V ) $,
	$ A \in \mathcal{C}( \angle_T, L(V) ) $.
	Then there exists a unique function
	$ y \in \mathcal{C}(\angle_T, V ) $
	such that for all 
	$ (s,t) \in \angle_T $ 
	it holds that
	\begin{equation}
	\label{eq:solution}
	y(s, t) = \phi(s, t) + \int_s^t A(s, \tau) y(s, \tau) \, d \tau.
	\end{equation}
\end{lemma}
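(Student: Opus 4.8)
The plan is to obtain $y$ as the unique fixed point of the operator
\[
\Psi \colon \mathcal{C}(\angle_T, V) \to \mathcal{C}(\angle_T, V),
\qquad
(\Psi g)(s,t) = \phi(s, t) + \int_s^t A(s, \tau)\, g(s, \tau)\, d\tau .
\]
First I would verify that $\Psi$ is well defined. For every $g \in \mathcal{C}(\angle_T, V)$, Lemma~\ref{lemma:WellConti} (with $U = L(V)$, $V = V$, $W = V$, $f = ([0,T] \times L(V) \ni (\tau, B) \mapsto B \in L(V))$, $y = A$, $h = g$ in the notation of Lemma~\ref{lemma:WellConti}) shows that $\angle_T \ni (s,t) \mapsto \int_s^t A(s,\tau)\, g(s,\tau)\, d\tau \in V$ is continuous, so together with the hypothesis $\phi \in \mathcal{C}(\angle_T, V)$ we get $\Psi g \in \mathcal{C}(\angle_T, V)$. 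Moreover, since $\angle_T$ is a compact subset of $[0,T]^2$ and $V$ is complete, the space $\mathcal{C}(\angle_T, V)$ equipped with the supremum norm $\|g\|_\infty = \sup_{(s,t) \in \angle_T} \|g(s,t)\|_V$ is an $\R$-Banach space, and a function $y \in \mathcal{C}(\angle_T, V)$ satisfies~\eqref{eq:solution} if and only if $\Psi y = y$.

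Next I would set $C = \sup_{(s,t) \in \angle_T} \|A(s,t)\|_{L(V)}$, which is finite because $A$ is continuous and $\angle_T$ is compact, and prove by induction on $n \in \N$ that for all $g_1, g_2 \in \mathcal{C}(\angle_T, V)$ and all $(s,t) \in \angle_T$ it holds that
\[
\| (\Psi^n g_1)(s,t) - (\Psi^n g_2)(s,t) \|_V \le \tfrac{C^n (t-s)^n}{n!}\, \| g_1 - g_2 \|_\infty .
\]
The case $n = 0$ is immediate, and the induction step follows from the estimate $\|(\Psi^{n+1} g_1)(s,t) - (\Psi^{n+1} g_2)(s,t)\|_V \le \int_s^t C \cdot \tfrac{C^n (\tau - s)^n}{n!}\, \|g_1 - g_2\|_\infty\, d\tau = \tfrac{C^{n+1}(t-s)^{n+1}}{(n+1)!}\, \|g_1 - g_2\|_\infty$. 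Passing to the supremum over $(s,t) \in \angle_T$ yields $\|\Psi^n g_1 - \Psi^n g_2\|_\infty \le \tfrac{(CT)^n}{n!}\, \|g_1 - g_2\|_\infty$ for every $n \in \N$.

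Finally, since $\lim_{n \to \infty} \tfrac{(CT)^n}{n!} = 0$, there exists $N \in \N$ with $\tfrac{(CT)^N}{N!} < 1$, so $\Psi^N$ is a contraction on the $\R$-Banach space $\mathcal{C}(\angle_T, V)$. By Banach's fixed point theorem $\Psi^N$ has a unique fixed point $y \in \mathcal{C}(\angle_T, V)$. Since $\Psi^N(\Psi y) = \Psi(\Psi^N y) = \Psi y$, the function $\Psi y$ is also a fixed point of $\Psi^N$, whence $\Psi y = y$; conversely, every fixed point of $\Psi$ is a fixed point of $\Psi^N$ and therefore equals $y$. Hence $\Psi$ has a unique fixed point, which is precisely the asserted unique $y \in \mathcal{C}(\angle_T, V)$ satisfying~\eqref{eq:solution}. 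The only points requiring a little care are the well-definedness of $\Psi$ (which is exactly Lemma~\ref{lemma:WellConti}) and the standard passage from ``$\Psi^N$ is a contraction'' to ``$\Psi$ has a unique fixed point''; the rest is a routine Picard iteration with a factorial bound.
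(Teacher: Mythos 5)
Your proof is correct and takes essentially the same route as the paper: both verify well-definedness of the Picard operator via Lemma~\ref{lemma:WellConti} and both rest on the factorial estimate $\sup_{(s,t)\in\angle_T}\|(\Psi^n g_1)(s,t)-(\Psi^n g_2)(s,t)\|_V\le \tfrac{(CT)^n}{n!}\,\|g_1-g_2\|_\infty$ for the iterates. The only difference is in packaging: the paper shows directly that the iterates form a Cauchy sequence, that the limit is a fixed point, and deduces uniqueness by summing the estimate, whereas you invoke the standard corollary of Banach's fixed point theorem for a map whose $N$-th power is a contraction.
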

\begin{proof}[Proof of Lemma~\ref{lemma:existence_uniqueness}]
	Throughout this proof let
	$ \mathcal{A} \colon \R \times [0,T] \to L(V) $
	be a  continuous function
	which satisfies for all
	$ (s,\tau) \in \angle_T $ that
	$ \mathcal{A}(s,\tau) = A(s,\tau) $,
	let
	$ f_n \colon \mathcal{C}( \angle_T, V ) 
	\to \mathcal{C}( \angle_T, V ) $,
	$ n \in \N $,
	be the functions which satisfy for all
	$ n \in \N $,
	$ x \in \mathcal{C}( \angle_T, V ) $,
	$ (s,t ) \in \angle_T $
	that
	$ ( f_1 ( x ) ) (s,t) = \phi(s,t) + \int_s^t A ( s, \tau ) x ( s, \tau ) \, d \tau $ 
	(see Lemma~\ref{lemma:WellConti} (with
	$ U = \R $,
	$ V = V $,
	$ W = V $,
	$ T = T $,
	$ f = ( [0, T] \times \R \ni ( \tau, s ) 
	\mapsto \mathcal{A} (s, \tau ) \in L(V) ) $,
	$ y = ( \angle_T \ni ( s, \tau )
	\mapsto 
	s \in \R ) $,
	$ h = x $
	for
	$ x \in \mathcal{C} ( \angle_T, V ) $
	in the notation of Lemma~\ref{lemma:WellConti}))
	and
	$ ( f_{n+1}(x) )(s,t) 
	= ( f_n(f_1(x)) ) (s,t) $,
	and let
	$ \mathcal{N} \subseteq \N $
	satisfy that
	\begin{equation}  
	\begin{split} 
	\mathcal{N} =
	\left\{ 
	\begin{array}{rr} 
	n \in \N \colon
	\big(
	\forall \, x, y \in \mathcal{C}( \angle_T, V ), 
	(s,t) \in \angle_T
	\colon 
	\| ( f_n(x) )( s,t) - ( f_n (y) )( s,t) \|_V 
	\\
	\leq 
	\tfrac{
		[\sup_{ (u, \tau) \in \angle_T } 
		\| A( u, \tau ) \|_{L(V)}
		(t-s) ]^{ n }
	} { n ! } 
	\sup_{ ( u, \tau ) \in \angle_T } \| x( u, \tau ) - y( u, \tau ) \|_V
	\big)
	\end{array}
	\right\} 
	.
	\end{split}
	\end{equation}
	Note that for all
	$ x, y \in \mathcal{C}( \angle_T, V ) $,
	$ (s,t) \in \angle_T $
	it holds that
	\begin{equation}
	\begin{split}
	& 
	\| ( f_1 (x) )( s, t ) - ( f_1 (y) )( s, t ) \|_V
	=
	\Big\|
	\int_s^t
	A( s, \tau ) [ x( s, \tau ) - y( s, \tau ) ] \, d\tau
	\Big\|_V
	\\
	&
	\leq
	\int_s^t
	\| A( s, \tau ) [ x( s, \tau ) - y( s, \tau ) ] \|_V \, d\tau
	\\
	&
	\leq
	(t-s)
	\sup\nolimits_{ (u,\tau) \in \angle_T } 
	\| A(u, \tau) \|_{L(V)}
	\sup\nolimits_{ (u,\tau) \in \angle_T}
	\| x(u,\tau) - y(u,\tau) \|_V
	.
	\end{split}
	\end{equation}
	This implies that $ 1 \in \mathcal{N} $
	and that
	$ f_1 $ is continuous.
	Moreover, observe that for all
	$ n \in \mathcal{N} $,
	$ x, y \in \mathcal{C}( \angle_T, V ) $,
	$ (s,t) \in \angle_T $
	it holds that
	\begin{equation}
	\begin{split}
	& 
	\| ( f_{n+1}(x) )( s, t ) - ( f_{n+1}(y) )( s, t ) \|_V
	\\
	&
	= 
	\Big\|
	\int_s^t
	A( s, \tau ) [ ( f_n(x) ) (s, \tau) - ( f_n(y) )(s, \tau) ] \, d\tau
	\Big\|_V
	\\
	&
	\leq
	\int_s^t
	\| A( s, \tau ) [ ( f_n (x) ) (s,\tau) - ( f_n (y) )(s,\tau) ] \|_V \, d\tau
	\\
	&
	\leq 
	\sup_{ (u,\tau) \in \angle_T } 
	\| A(u, \tau) \|_{L(V)}
	\int_s^t
	\| ( f_n(x) ) (s, \tau) - ( f_n(y) )(s, \tau)  \|_V \, d\tau
	\\
	&
	\leq 
	\sup_{ (u, \tau) \in \angle_T } 
	\| A(u, \tau) \|_{L(V)}
	\sup_{ (u,\tau) \in \angle_T } \| x(u, \tau) - y(u, \tau) \|_V
	\int_s^t
	\tfrac{
		[
		\sup_{ (u,v) \in \angle_T } 
		\| A( u, v ) \|_{L(V)}
		(\tau-s)
		]^n
	} { n ! } 
	\, d\tau
	\\
	&
	=
	\tfrac{
		[ \sup_{ (u, \tau) \in \angle_T } 
		\| A( u, \tau ) \|_{L(V)}
		(t-s) ]^{ n + 1 }
	} { ( n + 1 ) ! } 
	\sup\nolimits_{ (u, \tau) \in \angle_T } \| x( u,\tau ) - y( u,\tau ) \|_V
	.
	\end{split}
	\end{equation}
	The fact that $ 1 \in \mathcal{N} $ hence shows that $ \mathcal{N} = \N $.
	This proves that for all
	$ n \in \N $,
	$ x, y \in \mathcal{C}( \angle_T, V ) $,
	$ (s,t) \in \angle_T $
	it holds that
	\begin{equation}
	\label{eq:m=0}
	\| ( f_n(x) )( s, t ) - ( f_n(y) )( s, t ) \|_V 
	\leq
	\tfrac{
		[ \sup_{ (u, \tau) \in \angle_T} 
		\| A( u, \tau ) \|_{L(V)}
		(t-s) ]^{ n }
	} { n ! } 
	\sup\nolimits_{ (u, \tau ) \in \angle_T } 
	\| x( u, \tau ) - y( u, \tau ) \|_V
	.
	\end{equation}
	In particular,
	for all
	$ m, n \in \N $, 
	$ x \in \mathcal{C}( \angle_T, V ) $,
	$ (s,t ) \in \angle_T $
	it holds that
	\begin{equation}
	\begin{split}  
	&
	\| ( f_n(x) )( s, t ) - ( f_{n + 1}(x) )( s, t ) \|_V 
	\\
	&
	\leq
	\tfrac{
		[ \sup_{ (u, \tau) \in \angle_T} 
		\| A( u, \tau ) \|_{L(V)}
		(t-s) ]^{ n }
	} { n ! } 
	\sup\nolimits_{ (u, \tau ) \in \angle_T } 
	\| x( u, \tau ) - (f_1(x))( u, \tau ) \|_V
	.
	\end{split} 
	\end{equation}
	Therefore, we establish that
	for all
	$ m, n \in \N $,
	$ x, y \in \mathcal{C}( \angle_T, V ) $ 
	it holds that
	\begin{equation}
	\begin{split}
	\label{ex:Sum_estimate}
	& 
	\sup\nolimits_{ (s,t ) \in \angle_T }
	\| ( f_n(x) )( s, t ) - ( f_{m+n}(y) )( s, t ) \|_V 
	\\
	&
	\leq
	\sum_{k=0}^{m-1} 
	\sup\nolimits_{ (s,t ) \in \angle_T }
	\| ( f_{n+k}(x) )( s, t ) - ( f_{n + k + 1}(y) )( s, t ) \|_V 
	\\
	&
	\leq 
	\big( 
	\sup\nolimits_{ (u, \tau) \in \angle_T } 
	\| x(u, \tau) - ( f_1(x) )(u, \tau) \|_V
	\big) 
	\sum_{k = 0}^{m - 1}
	\tfrac{ [ \sup_{ (u, \tau) \in \angle_T } 
		\| A(u, \tau) \|_{L(V)}
		T ]^{ n + k } }
	{ ( n + k ) ! }
	\\
	&
	\leq 
		\tfrac{
	( 
	\sup_{ (u, \tau) \in \angle_T }  
	\| x(u, \tau) - ( f_1(x) )(u, \tau) \|_V
	)
	[ \sup\nolimits_{ (u, \tau) \in \angle_T } 
	\| A(u, \tau) \|_{L(V)}
	T  ]^{ n }
}{ n! }
	\sum_{k = 0}^{m - 1}
	\tfrac{ [ \sup_{ (u, \tau) \in \angle_T } 
		\| A(u, \tau) \|_{L(V)}
		T ]^{ k } }
	{ k ! }
	\\
	&
	\leq 
	\tfrac{
		( 
		\sup_{ (u, \tau) \in \angle_T }  
		\| x(u, \tau) - ( f_1(x) )(u, \tau) \|_V
		)
		[ \sup\nolimits_{ (u, \tau) \in \angle_T } 
		\| A(u, \tau) \|_{L(V)}
		T  ]^{ n }
	}{ n! }
	\exp\big( 
	\sup\nolimits_{ (u, \tau) \in \angle_T } 
	\| A( u, \tau ) \|_{L(V)}
	T
	\big)
	.
	\end{split}
	\end{equation}
	This ensures that for every
	$ x \in \mathcal{C}( \angle_T, V ) $
	the sequence
	$ ( f_n(x) )_{n \in \N} \subseteq \mathcal{C}( \angle_T, V ) $
	is Cauchy.
	The fact that
	$ \mathcal{C}( \angle_T, V ) $ is a 
	Banach space
	hence shows that for every
	$ x \in \mathcal{C}( \angle_T, V ) $
	there exists 
	$ z \in \mathcal{C}( \angle_T, V ) $
	such that
	\begin{equation} 
	\label{eq:lim_exists}
	\limsup\nolimits_{n \to \infty} 
	\big(
	\sup\nolimits_{(s,t) \in \angle_T }  
	\| (f_n(x))(s,t) - z(s,t) \|_V 
	\big) 
	= 
	0 
	.
	\end{equation}
	\sloppy 
	Moreover,  
	note that
	for all
	$ x, z \in \mathcal{C}( \angle_T, V ) $
	with
	$ \limsup_{n \to \infty} (
	\sup_{(s,t) \in \angle_T }  
	\| f_n(x)(s,t) - z(s,t) \|_V ) = 0 $
	it holds that
	\begin{equation} 
	\begin{split} 
	&
	\sup\nolimits_{(s,t) \in \angle_T }
	\| ( f_1(z) )(s,t) - z(s,t) \|_V
	\\
	&
	\leq
	\limsup\nolimits_{n \to \infty} 
	\big(
	\sup\nolimits_{(s,t) \in \angle_T }   
	\| ( f_1(z) ) (s,t) - ( f_1(f_n(x)) ) (s,t)  \|_V
	\big)
	\\
	&
	\quad
	+
	\limsup\nolimits_{n \to \infty} 
	\big(
	\sup\nolimits_{(s,t) \in \angle_T }  
	\| ( f_1(f_n(x)) )(s,t) - z(s,t) \|_V
	\big)
	\\
	&
	=
	\limsup\nolimits_{n \to \infty} 
	\big(
	\sup\nolimits_{(s,t) \in \angle_T }   
	\| ( f_1(z) )(s,t) - ( f_1(f_n(x)) )(s,t) \|_V
	\big)
	\\
	&
	\quad
	+
	\limsup\nolimits_{n \to \infty} 
	\big(
	\sup\nolimits_{(s,t) \in \angle_T }  
	\| (f_{n+1}(x))(s,t) - z(s,t) \|_V
	\big)
	\\
	&
	=
	\limsup\nolimits_{n \to \infty} 
	\big(
	\sup\nolimits_{(s,t) \in \angle_T }   
	\| ( f_1(z) )(s,t) - ( f_1(f_n(x)) )(s,t) \|_V
	\big)
	. 
	\end{split} 
	\end{equation}
	Combining this with~\eqref{eq:lim_exists}
	and
	the fact that
	$ f_1 \colon \mathcal{C}( \angle_T, V ) 
	\to \mathcal{C}( \angle_T, V ) $ is continuous  
	demonstrates that there exists $ z \in \mathcal{C}( \angle_T, V ) $
	such that 
	\begin{equation} 
	\label{eq:ProvesUniquenes}
	f_1(z) = z.
	\end{equation} 
	In addition, note that~\eqref{eq:m=0}
	implies that for all
	$ N \in \N $,
	$ x,y \in \mathcal{C}( \angle_T, V ) $
	with
	$ f_1(x) = x $,
	$ f_1(y) = y $
	it holds that
	\begin{equation}
	\begin{split}
	&
	\sum_{n=1}^N
	\sup_{ (s,t ) \in \angle_T }
	\| x( s, t ) - y( s, t ) \|_V 
	=
	\sum_{n=1}^N
	\sup_{ (s,t ) \in \angle_T }
	\| ( f_n(x) )( s, t ) - ( f_n(y) )( s, t ) \|_V 
	\\
	&
	\leq
	\sum_{n=1}^N
		\tfrac{
		[ \sup_{ (u, \tau) \in \angle_T} 
		\| A( u, \tau ) \|_{L(V)}
		(t-s) ]^{ n }
	} { n ! } 
	\sup\nolimits_{ (u, \tau ) \in \angle_T } 
	\| x( u, \tau ) - y( u, \tau ) \|_V
	\\
	&
	\leq
	\big(
	\sup\nolimits_{ (u, \tau) \in \angle_T } \| x( u, \tau ) - y( u, \tau ) \|_V 
	\big)
	\exp\big( 
	\sup\nolimits_{ (u, \tau) \in \angle_T } 
	\| A( u, \tau ) \|_{L(V)}
	T
	\big)
	< \infty
	.
	\end{split}
	\end{equation}
	This and~\eqref{eq:ProvesUniquenes} show that
	there exists a unique
	$ z \in \mathcal{C}( \angle_T, V) $
	such that
	$ f_1(z) = z $.
	The proof of Lemma~\ref{lemma:existence_uniqueness}
	is thus completed.
\end{proof}
\begin{lemma}
	\label{lemma:bound}
	Let $ (V, \left\| \cdot \right\|_V) $ 
	be an $ \R $-Banach space 
	and let
	$ T \in (0,\infty) $, 
	$ \angle_T = \{ (s, t) \in [0,T]^2 \colon s \leq t \} $,
	$ \phi \in \mathcal{C}( \angle_T, V ) $,
	$ A \in \mathcal{C}( \angle_T, L(V) ) $, 
	$ y \in \mathcal{C}(\angle_T, V ) $
	satisfy for all
	$ (s,t) \in \angle_T $ that
	\begin{equation} 
	y(s, t) = \phi(s, t) + \int_s^t A(s, \tau) y(s, \tau) \, d \tau.
	\end{equation}
	Then 
	\begin{equation}
	\sup\nolimits_{(s,t) \in \angle_T }
	\| y(s,t) \|_V 
	\leq
	\big(
	\sup\nolimits_{(s,t) \in \angle_T }
	\| \phi(s,t) \|_V
	\big)
	\exp 
	\big(
	T
	\sup\nolimits_{(s,t) \in \angle_T }
	\| A( s,t) \|_{L(V)}
	\big)
	.
	\end{equation}
\end{lemma}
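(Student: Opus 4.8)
The plan is to reduce the claim to a one-parameter Gronwall estimate, applied separately for each fixed initial time. First I would introduce the abbreviations $\Phi := \sup_{(s,t)\in\angle_T}\|\phi(s,t)\|_V$ and $\alpha := \sup_{(s,t)\in\angle_T}\|A(s,t)\|_{L(V)}$ and observe that both quantities are finite: the maps $\angle_T \ni (s,t) \mapsto \|\phi(s,t)\|_V \in \R$ and $\angle_T \ni (s,t) \mapsto \|A(s,t)\|_{L(V)} \in \R$ are continuous (as compositions of the continuous functions $\phi$, $A$ with the respective norms) and $\angle_T$ is a compact subset of $\R^2$.

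Next, I would fix $s \in [0,T]$. For every $t \in [s,T]$ the assumed integral identity, the triangle inequality for the integral, and the submultiplicativity of the operator norm yield
\begin{equation}
\|y(s,t)\|_V \le \|\phi(s,t)\|_V + \int_s^t \|A(s,\tau)\|_{L(V)} \, \|y(s,\tau)\|_V \, d\tau \le \Phi + \alpha \int_s^t \|y(s,\tau)\|_V \, d\tau .
\end{equation}
Since $[s,T] \ni t \mapsto \|y(s,t)\|_V \in \R$ is continuous, Gronwall's lemma is applicable and shows that $\|y(s,t)\|_V \le \Phi \, e^{\alpha(t-s)}$ for all $t \in [s,T]$. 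Using $t - s \le T$ and taking the supremum over all $(s,t) \in \angle_T$ then gives the asserted bound
\begin{equation}
\sup\nolimits_{(s,t)\in\angle_T}\|y(s,t)\|_V \le \Phi \, e^{\alpha T} .
\end{equation}

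I do not expect any serious obstacle here; the argument is a routine Gronwall estimate. The only points that require a word of care are the finiteness of $\Phi$ and $\alpha$ (handled by continuity together with the compactness of $\angle_T$ as above) and the fact that the function to which Gronwall's lemma is applied is genuinely continuous, which follows from the hypothesis $y \in \mathcal{C}(\angle_T, V)$. As an alternative to invoking Gronwall's lemma one could iterate the affine fixed-point map $f_1$ from the proof of Lemma~\ref{lemma:existence_uniqueness} and pass to the limit in an estimate analogous to~\eqref{ex:Sum_estimate}, but citing Gronwall's lemma is the most economical route.
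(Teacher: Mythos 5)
Your argument is correct and coincides with the paper's own proof: both bound $\|y(s,t)\|_V$ by $\sup\|\phi\|_V+\sup\|A\|_{L(V)}\int_s^t\|y(s,\tau)\|_V\,d\tau$ via the triangle inequality and then apply Gronwall's lemma for each fixed $s$, using $t-s\le T$ at the end. The remarks on finiteness of the suprema and continuity of $t\mapsto\|y(s,t)\|_V$ are the same justifications the paper relies on implicitly.
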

\begin{proof}[Proof of Lemma~\ref{lemma:bound}]
	Note that for all
	$ (s,t) \in \angle_T $
	it holds that
	\begin{equation}
	\begin{split} 
	\| y(s,t) \|_V 
	&
	\leq 
	\| \phi(s,t) \|_V 
	+
	\int_s^t \| A( s, \tau ) y( s, \tau ) \|_V \, d \tau 
	\\
	&
	\leq
	\| \phi(s,t) \|_V 
	+
	\sup\nolimits_{(u,v) \in \angle_T }
	\| A(u, v ) \|_{L(V)}
	\int_s^t \| y (s, \tau) \|_V \, d \tau 
	\\
	&
	\leq 
	\sup\nolimits_{ (u,v) \in \angle_T } 
	\| \phi( u, v ) \|_V 
	+
	\sup\nolimits_{(u,v) \in \angle_T }
	\| A(u, v ) \|_{L(V)}
	\int_s^t \| y (s, \tau) \|_V \, d \tau 
	< \infty.
	\end{split} 
	\end{equation} 
	Gronwall's lemma hence shows that
	for all
	$ (s,t ) \in \angle_T $
	it holds that
	\begin{equation}
	\| y( s,t) \|_V 
	\leq 
	\big(
	\sup\nolimits_{(u,v) \in \angle_T}
	\| \phi(u,v) \|_V 
	\big)
	\exp \big( 
	\sup\nolimits_{(u,v) \in \angle_T} 
	\| A(u,v) \|_{L(V)}
	(t-s)
	\big)
	.
	\end{equation}
	The proof of Lemma~\ref{lemma:bound}
	is thus completed.
\end{proof}  
\begin{lemma}
	\label{lemma:diff_initial_value}
	Let
	$ (V, \left\| \cdot \right\|_V) $ 
	be a nontrivial $ \R $-Banach space,
	let
	$ T \in (0,\infty) $, 
	$ \angle_T = \{ (s, t) \in [0,T]^2 \colon s \leq t \} $, 
	$ f \in \mathcal{C}^{0,1}( [0,T] \times V, V) $,   
	let
	$ f_{0,1} \colon [0,T] \times V \to L( V ) $
	be the function 
	which satisfies 
	for all
	$ t \in [0,T] $, $ x \in V $ that
	$ f_{0,1}(t,x) = ( \tfrac{ \partial }{\partial x } f )(t,x) $,
	and 
	for every
	$ x \in V $ 
	let
	$ ( X_{s,t}^x )_{ (s,t) \in \angle_T }
	\colon \angle_T 
	\to V $
	be a continuous function
	which satisfies
	for all 
	$ (s,t) \in \angle_T $ 
	that
	$ X_{s, t}^x = 
	x
	+
	\int_s^t f( \tau, X_{s, \tau}^x) \,d\tau $. 
	Then
	\begin{enumerate}[(i)]
		\item \label{item:continuity}
		it holds 
		that
		$ ( \angle_T \times V \ni (s,t,x) \mapsto X_{s, t}^x \in V ) \in \mathcal{C}^{0,0,1}
		( \angle_T \times V, V ) $
		and
		\item \label{item:identity} 
		it holds for all 
		$ x,y \in V $, $ ( s, t ) \in \angle_T $ that
		\begin{equation}
		 \big( 
		 \tfrac{\partial}{\partial x }
		  X_{s, t}^x
		  \big)  y
		=
		y
		+
		\int_s^t 
		f_{0,1}
		(\tau, X_{s, \tau}^x)
		\big(
		\tfrac{\partial}{\partial x } X_{s, \tau}^x 
		\big)
		y
		\, d\tau
		.
		\end{equation}
	\end{enumerate}
\end{lemma}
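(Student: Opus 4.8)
The plan is to view the solution map $\Psi\colon V\ni x\mapsto\big(\angle_T\ni(s,t)\mapsto X_{s,t}^x\big)\in\mathcal{C}(\angle_T,V)$ as the seemingly non-differentiable side of an equation $g\circ\Psi=\iota$ in which $g$ is $\mathcal{C}^1$ with everywhere-invertible derivative, and then to conclude that $\Psi$ itself is $\mathcal{C}^1$ by means of the converse chain rule in Lemma~\ref{lemma:converse_chain_rule}. Concretely, set $E=\mathcal{C}(\angle_T,V)$ (a nontrivial $\R$-Banach space since $V$ is), let $\iota\in L(V,E)$ be the operator $(\iota(x))(s,t)=x$, let $F\colon E\to E$ be given by $(F(u))(s,t)=\int_s^t f(\tau,u(s,\tau))\,d\tau$, and put $g=\mathrm{id}_E-F$. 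First I would record that $\Psi$ is well defined and continuous: for each fixed $x$ the map $(s,t)\mapsto X_{s,t}^x$ is continuous, and, using Corollary~\ref{Corollary:UnifCont} together with the compactness of $\angle_T$ (via the argument behind Lemma~\ref{lemma:Help_cont}), the map $\Psi$ itself is continuous. By Lemma~\ref{lemma:help_lemma}, $F\in\mathcal{C}^1(E,E)$ with $(F'(u)h)(s,t)=\int_s^t f_{0,1}(\tau,u(s,\tau))h(s,\tau)\,d\tau$, hence $g\in\mathcal{C}^1(E,E)$ with $g'(u)=\mathrm{id}_E-F'(u)$; and since $X_{s,t}^x=x+\int_s^t f(\tau,X_{s,\tau}^x)\,d\tau$ we have $g\circ\Psi=\iota\in\mathcal{C}^1(V,E)$.

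Next I would check the invertibility hypothesis of Lemma~\ref{lemma:converse_chain_rule} along $\Psi$. For fixed $x$ the function $A^x\colon\angle_T\to L(V)$, $A^x(s,\tau)=f_{0,1}(\tau,X_{s,\tau}^x)$, lies in $\mathcal{C}(\angle_T,L(V))$ (Corollary~\ref{Corollary:UnifCont} and continuity of $f_{0,1}$), and solving $g'(\Psi(x))h=\phi$ for $h\in E$ is exactly solving $h(s,t)=\phi(s,t)+\int_s^t A^x(s,\tau)h(s,\tau)\,d\tau$; by Lemma~\ref{lemma:existence_uniqueness} this has a unique solution, and Lemma~\ref{lemma:bound} bounds $\sup_{\angle_T}\|h\|_V$ by $\exp\!\big(T\sup_{\angle_T}\|A^x\|_{L(V)}\big)\sup_{\angle_T}\|\phi\|_V$, so $g'(\Psi(x))$ is bijective with $[g'(\Psi(x))]^{-1}\in L(E,E)$. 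Lemma~\ref{lemma:converse_chain_rule} (with $X=V$, $Y=Z=E$, $f=\Psi$, $g=g$) then yields $\Psi\in\mathcal{C}^1(V,E)$ and $\Psi'(x)=[g'(\Psi(x))]^{-1}\iota$. Composing with the bounded evaluation $\mathrm{ev}_{s,t}\in L(E,V)$ shows that $V\ni x\mapsto X_{s,t}^x\in V$ is Fr\'echet differentiable with $\tfrac{\partial}{\partial x}X_{s,t}^x=\mathrm{ev}_{s,t}\circ\Psi'(x)$, and applying $g'(\Psi(x))$ to $\Psi'(x)y$ and evaluating at $(s,t)$ gives $(\tfrac{\partial}{\partial x}X_{s,t}^x)y=y+\int_s^t f_{0,1}(\tau,X_{s,\tau}^x)(\tfrac{\partial}{\partial x}X_{s,\tau}^x)y\,d\tau$, which is item~\eqref{item:identity}.

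It remains to establish item~\eqref{item:continuity}. Joint continuity of $(s,t,x)\mapsto X_{s,t}^x$ is Corollary~\ref{Corollary:UnifCont}, so the point is continuity of $(s,t,x)\mapsto\tfrac{\partial}{\partial x}X_{s,t}^x$ as an $L(V)$-valued map. Here I would first let $\Theta^x\in\mathcal{C}(\angle_T,L(V))$ be the unique solution, supplied by Lemma~\ref{lemma:existence_uniqueness} applied with underlying Banach space $L(V)$, of $\Theta^x(s,t)=\mathrm{id}_V+\int_s^t A^x(s,\tau)\Theta^x(s,\tau)\,d\tau$; evaluating this equation at an arbitrary $y\in V$ and invoking the uniqueness in Lemma~\ref{lemma:existence_uniqueness} against the identity from the previous paragraph identifies $\Theta^x(s,t)=\tfrac{\partial}{\partial x}X_{s,t}^x$, so for each fixed $x$ this partial derivative is continuous in $(s,t)$ in $L(V)$-norm. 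For the dependence on $x$ I would use Lemma~\ref{lemma:CompactnessArgument} (with $y=\Psi(x)$ and $h=\Psi(x')-\Psi(x)$, whose sup-norm tends to $0$ by continuity of $\Psi$) to get $\sup_{\angle_T}\|A^x-A^{x'}\|_{L(V)}\to0$ as $x'\to x$, subtract the integral equations for $\Theta^x$ and $\Theta^{x'}$, and apply Lemma~\ref{lemma:bound} (with the bounds on $\sup_{\angle_T}\|A^{x'}\|_{L(V)}$ and $\sup_{\angle_T}\|\Theta^{x'}\|_{L(V)}$ that hold locally uniformly in $x'$) to obtain $\sup_{\angle_T}\|\Theta^x(s,t)-\Theta^{x'}(s,t)\|_{L(V)}\to0$; combining this uniform-in-$(s,t)$ convergence with the $(s,t)$-continuity of each $\Theta^x$ yields the desired joint continuity. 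The main obstacle is precisely this last step: the converse-chain-rule argument only produces $\Psi'(x)\in L(V,\mathcal{C}(\angle_T,V))$, which gives merely pointwise-in-$y$ (strong) continuity of $(s,t)\mapsto\tfrac{\partial}{\partial x}X_{s,t}^x$, and upgrading to operator-norm continuity forces the detour through the genuinely $L(V)$-valued variational equation together with a Gronwall-type estimate.
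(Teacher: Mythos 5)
Your proposal follows essentially the same route as the paper's proof: there too the solution map $G(v)=(\angle_T\ni(s,t)\mapsto X_{s,t}^v\in V)$ is characterized by $F\circ G=H$ with $(F(z))(s,t)=z(s,t)-\int_s^t f(\tau,z(s,\tau))\,d\tau$ and $(H(v))(s,t)=v$, and Lemma~\ref{lemma:help_lemma}, Lemma~\ref{lemma:existence_uniqueness}, Lemma~\ref{lemma:bound}, and the converse chain rule (Lemma~\ref{lemma:converse_chain_rule}) are combined to obtain $G\in\mathcal{C}^1(V,\mathcal{C}(\angle_T,V))$ and both items from $G'(v)=[F'(G(v))]^{-1}H'(v)$. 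The only divergence is the point you flag at the end: the paper deduces all of item~(i) directly from the $\mathcal{C}^1$-property of $G$ (which controls the $x$-dependence of $\tfrac{\partial}{\partial x}X_{s,t}^x$ uniformly in $(s,t)$ but, as you correctly observe, by itself yields only strong rather than operator-norm continuity in $(s,t)$ for fixed $x$), whereas your detour through the $L(V)$-valued variational equation supplies that norm continuity explicitly and is a legitimate refinement.
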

\begin{proof}[Proof of Lemma~\ref{lemma:diff_initial_value}]
	\sloppy
	Throughout this proof  
	let 
	$ F \colon \mathcal{C}( \angle_T, V )
	\to  
	\mathcal{C}( \angle_T, V ) $
	and
	$ G, H \colon V \to \mathcal{C}( \angle_T, V ) $
	be the functions
	which satisfy for all 
	$ v \in V $,  
	$ (s,t) \in \angle_T $,
	$ z \in \mathcal{C}( \angle_T, V ) $ 
	that
	$ ( F ( z ) )(s,t) = z(s, t) - \int_s^t f ( \tau, z( s, \tau) ) \, d\tau $,
	$ ( G ( v ) )(s,t) = X_{s, t}^v $,
	and
	$ ( H ( v ) )(s, t) = v $.
	Note that
	Lemma~\ref{lemma:help_lemma}
	(with
	$ V = V $,
	$ T = T $, 
	$ f = f $, 
	$ F = ( 
	\mathcal{C}( \angle_T, V) \ni z \mapsto
	(
	\angle_T \ni (s,t) \mapsto \int_s^t f( \tau, z(s, \tau) ) \, d \tau
	)
	\in 
	\mathcal{C}( \angle_T, V )
	)  $,
	$ f_{0,1} = f_{0,1} $
	in the notation of
	Lemma~\ref{lemma:help_lemma})
	proves that
	for all   
	$ y, h \in \mathcal{C}( \angle_T, V ) $,
	$ (s,t) \in \angle_T $
	it holds that
	$ F \in \mathcal{C}^1( \mathcal{C}( \angle_T, V ), 
	\mathcal{C}( \angle_T, V ) ) $
	and
	\begin{equation} 
	\label{eq:first_der_identity}
	( F'( y ) h )(s, t)
	=
	h( s, t ) - \int_s^t   f_{0,1}   
	( \tau, y ( s, \tau) ) h ( s, \tau ) \, d\tau 
	.
	\end{equation}
	Therefore, we obtain that
	for all   
	$ v \in V $,
	$ h \in \mathcal{C}( \angle_T, V ) $,
	$ (s,t) \in \angle_T $
	it holds that
	\begin{equation} 
	h( s, t )
	=
	( F'( G( v ) ) h )(s, t)
	+
	\int_s^t f_{0,1}  ( \tau, G( v ) ( s, \tau) ) 
	h( s, \tau ) \, d\tau 
	.
	\end{equation}
	Combining this with
	Lemma~\ref{lemma:existence_uniqueness}
	(with
	$ V = V $,
	$ T = T $, 
	$ \phi = ( \angle_T \ni (s, \tau )
	\mapsto  ( F'( G( v ) ) h ) ( s, \tau ) \in V ) $,
	$ A = \big( \angle_T \ni ( s, \tau ) 
	\mapsto 
	f_{0,1}  ( \tau, G( v ) ( s, \tau ) ) \in L(V) \big) $ 
	for 
	$ h \in \mathcal{C}( \angle_T, V ) $,
	$ v \in V $ 
	in the notation of
	Lemma~\ref{lemma:existence_uniqueness})
	shows that
	for every
	$ v \in V $ it holds that 
	$ F'( G( v ) ) \in L
	(
	\mathcal{C} ( \angle_T, V ),
	\mathcal{C} ( \angle_T, V )
	) $ 
	is invertible.
	In addition, 
	Lemma~\ref{lemma:bound}
	(with
	$ V = V $,
	$ T = T $, 
	$ \phi = ( \angle_T \ni (s, \tau )
	\mapsto  ( F'( G( v ) ) h ) ( s, \tau ) \in V ) $,
	$ A = \big( \angle_T \ni ( s, \tau ) 
	\mapsto 
	f_{0,1} ( \tau, G( v ) ( s, \tau ) ) \in L(V) \big) $,
	$ y = h $ 
	for 
	$ h \in \mathcal{C}( \angle_T, V ) $,
	$ v \in V $ 
	in the notation of Lemma~\ref{lemma:bound})
	ensures that for every $ v \in V $ it holds that
	\begin{equation} 
	\label{eq_bounded_inverse}
	[ F'(G(v))]^{-1} \in L
	(
	\mathcal{C} ( \angle_T, V ),
	\mathcal{C} ( \angle_T, V )
	) .
	\end{equation}
	Moreover, observe that for all
	$ v \in V $, 
	$ (s,t) \in \angle_T $
	it holds that
	\begin{equation}
	\begin{split} 
	\label{eq:important_identity}
	( F ( G ( v ) ) ) ( s, t )
	&
	=
	( G ( v ) )(s, t) 
	- 
	\int_s^t f ( \tau, ( G ( v ) )(s, \tau) ) \, d\tau
	\\
	&
	=
	X_{s, t}^v
	-
	\int_s^t
	f( \tau, X_{s, \tau}^v ) 
	\, d\tau
	=
	v
	=
	( H( v ) ) ( s, t )
	.
	\end{split}
	\end{equation}
	Next we intend to prove that
	\begin{equation}
	\label{eq:Important}
	G \in \mathcal{C}( V, \mathcal{C}( \angle_T, V) ) 
	.
	\end{equation}
	For this note that
	Corollary~\ref{Corollary:UnifCont}
	(with
	$ V = V $,
	$ T = T $,
	$ f = f $,
	$ X_{s,t}^x = X_{s,t}^x $
	for
	$ (s,t) \in \angle_T $,
	$ x \in V $
	in the notation of 
	Corollary~\ref{Corollary:UnifCont})
	shows that
	$ \angle_T \times V 
	\ni
	(s,t,x)
	\mapsto
	X_{s,t}^x \in V $
	is continuous. 
	This implies that there exists
	a function
	$ \delta \colon (0, \infty) \times \angle_T \times V \to (0, \infty) $
	such that for all
	$ \varepsilon \in (0, \infty) $,
	$ (s_0, t_0), (s,t) \in \angle_T $,
	$ v_0, v \in V $
	with
	$ | s_0 - s | + |t_0 - t |+ \| v_0 - v \|_V <
	\delta_{ s_0, t_0, v_0 }^{\varepsilon} $
	it holds that
	\begin{equation} 
	\label{eq:SomeCont}
	\| X_{s_0, t_0}^{v_0} - X_{s,t}^v \|_V 
	<
	\nicefrac{ \varepsilon }{2 } 
	.
	\end{equation}
	The fact that for all
	$ v_0 \in V $,
	$ \varepsilon \in (0, \infty) $ it holds that
	\begin{equation}
	\angle_T 
	\subseteq 
	\cup_{ (s_0, t_0) \in \angle_T }
	\{ (s,t) \in \angle_T \colon 
    | s_0 - s |
    +
    | t_0 - t | 
    < 
    \nicefrac{ \delta_{ s_0, t_0, v_0 }^{\varepsilon}
    }{2}
    \}
	\end{equation}
	and the compactness of $ \angle_T $
	prove that for all
	$ v_0 \in V $,
	$ \varepsilon \in (0, \infty) $ 
	there exist 
	$ N \in \N $,
	$ (s_1, t_1), \ldots, (s_N, t_N) \in \angle_T $ such that
	\begin{equation}
	\angle_T 
	=
	\cup_{ n = 1 }^N
	\{ (s,t) \in \angle_T \colon 
	| s_n - s |
	+
	| t_n - t | 
	< 
	\nicefrac{ \delta_{ s_n, t_n, v_0 }^{\varepsilon}
	}{2}
	\}.
	\end{equation} 
	Combining this with~\eqref{eq:SomeCont}
	demonstrates that for all
	$ v_0 \in V $,
	$ \varepsilon \in (0, \infty) $
	there exist
	$ N \in \N $,
	$ (s_1, t_1), \ldots, (s_N, t_N) \in \angle_T $
	such that for all
	$ v \in V $,
	$ (s, t) \in \angle_T $
	with
	$ \| v_0 - v \|_H 
	<
	\frac{1}{2}
	\min_{ n \in [1,N] \cap \N } \delta_{s_n, t_n, v_0}^{  \varepsilon  } $
	it holds that
	\begin{equation}
	\| X_{s, t}^{v_0} - X_{s, t}^v \|_H
	\leq
	\min\nolimits_{ n \in [1,N] \cap \N } 
	\big( 
	\| X_{s, t}^{ v_0 } - X_{ s_n, t_n }^{ v_0 } \|_H
	+
	\| X_{ s_n, t_n }^{ v_0 } - X_{s, t}^v \|_H
	\big) 
	<
	\nicefrac{ \varepsilon }{2 }
	+
	\nicefrac{\varepsilon}{2}
	=
	\varepsilon
	.
	\end{equation}
	This ensures that~\eqref{eq:Important} holds.
	Furthermore, note that 
	$ H $ is continuously differentiable and that
	for all  
	$ v, w \in V $ it holds that
	\begin{equation} 
	\label{eq:H_diff} 
	H'(v) w = ( \angle_T \ni (s, \tau) \mapsto w \in V ) 
	.
	\end{equation}
	Equation~\eqref{eq:important_identity}
	hence shows that 
	$ F \circ G $ is continuously differentiable.
	This, \eqref{eq_bounded_inverse}, 
	\eqref{eq:Important},
	the facts that
	for every 
	$ v \in V $
	it holds that
	$ F $ is continuously differentiable
	and 
	$ F'( G( v ) ) \in L
	(
	\mathcal{C} ( \angle_T, V ),
	\mathcal{C} ( \angle_T, V )
	) $ 
	is invertible,
	and
	Lemma~\ref{lemma:converse_chain_rule}
	(with
	$ X = V $,
	$ Y = \mathcal{C}( \angle_T, V ) $,
	$ Z = \mathcal{C}( \angle_T, V ) $,  
	$ f = G $,
	$ g = F $
	for  
	$ v \in V $ 
	in the notation of Lemma~\ref{lemma:converse_chain_rule})
	imply that
	$ G $ is continuously differentiable and that 
	for every $ v \in V $ it holds that
	\begin{equation}
	\label{eq:crucial_identity}
	G'(v) = [ F'(G(v)) ]^{-1} H'(v) 
	.
	\end{equation} 
	Hence, we obtain that
	\begin{equation}
	\label{eq:solution_diff}
	( V \ni x \mapsto X^x \in \mathcal{C}(\angle_T, V) )
	\in 
	\mathcal{C}^1 ( V,
	\mathcal{C}( \angle_T, V ) )
	.
	\end{equation}
	This 
	implies that
	for every $ x \in V $, $ \varepsilon \in (0,\infty) $
	there exists $ \Delta \in (0,\infty) $ such that for all
	$ s \in [0,T] $,
	$ t, u \in [s,T] $,
	$ \tau \in [u,T] $,
	$ y \in V $ 
	with 
	$ | s-u| + |t-\tau | + \| x-y \|_V < \Delta $ 
	it holds that
	\begin{equation}
	\begin{split}
	\label{eq:joint_cont}
	&
	\| X_{s,t}^x - X_{u,\tau}^y \|_V
	\leq 
	\| X_{s,t}^x - X_{s,\tau}^x \|_V
	+
	\| X_{s,\tau}^x - X_{u,\tau}^x \|_V
	+
	\| X_{u,\tau}^x - X_{u,\tau}^ y \|_V
	\\
	&
	\leq
	\| X_{s,t}^x - X_{s,\tau}^x \|_V
	+
	\| X_{s,\tau}^x - X_{u,\tau}^x \|_V
	+
	\sup\nolimits_{(v,r ) \in \angle_T}
	\| X_{v,r}^x - X_{v,r}^ y \|_V
	< \varepsilon.
	\end{split}
	\end{equation}
	Moreover, 
	observe that~\eqref{eq:solution_diff}
	proves that
	for every $ x \in V $, $ \varepsilon \in (0,\infty) $
	there exists $ \Delta \in (0,\infty) $ such that for all
	$ s \in [0,T] $,
	$ t, u \in [s,T] $,
	$ \tau \in [u,T] $,
	$ y \in V $ 
	with 
	$ | s-u| + |t-\tau | + \| x-y \|_V < \Delta $ 
	it holds that
	\begin{equation}
	\begin{split}
	&
	\big\| 
	\tfrac{\partial}{\partial x} X_{s,t}^x 
	- 
	\tfrac{\partial}{\partial x} X_{u,\tau}^y 
	\big\|_{ L(V) }
	\\
	&
	\leq 
	\big\| 
	\tfrac{\partial}{\partial x} X_{s,t}^x 
	- 
	\tfrac{\partial}{\partial x} X_{s,\tau}^x 
	\big\|_{ L(V) }
	+
	\big\| 
	\tfrac{\partial}{\partial x} X_{s,\tau}^x 
	- 
	\tfrac{\partial}{\partial x} X_{u,\tau}^x 
	\big\|_{ L(V) }
	+
	\big\| 
	\tfrac{\partial}{\partial x} X_{u,\tau}^x 
	- 
	\tfrac{\partial}{\partial x} X_{u,\tau}^ y 
	\big\|_{ L(V) }
	\\
	&
	\leq
	\big\| 
	\tfrac{\partial}{\partial x} X_{s,t}^x 
	- 
	\tfrac{\partial}{\partial x} X_{s,\tau}^x 
	\big\|_{ L(V) }
	+
	\big\| 
	\tfrac{\partial}{\partial x} X_{s,\tau}^x 
	- 
	\tfrac{\partial}{\partial x} X_{u,\tau}^x 
	\big\|_{ L(V) }
	+
	\sup_{(v,r ) \in \angle_T}
	\big\| 
	\tfrac{\partial}{\partial x} X_{v,r}^x 
	- 
	\tfrac{\partial}{\partial x} X_{v,r}^ y 
	\big\|_{ L(V) }
	< 
	\varepsilon.
	\end{split}
	\end{equation}
	This and~\eqref{eq:joint_cont}
	establish item~\eqref{item:continuity}. 
	In addition, 
	note that~\eqref{eq:H_diff} and~\eqref{eq:crucial_identity} 
	ensure that
	for all $ v, w \in V $,  
	$ (s,t) \in \angle_T $
	it holds that
	$ ( F'( G( v ) ) G'( v ) w ) ( s, t )= w  $.
	Display~\eqref{eq:first_der_identity} 
	hence demonstrates that for all 
	$ (s,t) \in \angle_T $,
	$ v, w \in V $ 
	it holds that
	\begin{equation}
	\begin{split}
	w 
	&
	=
	( F'( G( v ) )  G'( v ) w ) ( s, t )
	=
	( G'( v ) w )( s, t )
	-
	\int_s^t 
	f_{0,1}
	(\tau, G( v )( \tau ) ) ( G'( v ) w)( s, \tau ) \, d\tau
	\\
	&
	= 
	\tfrac{\partial}{\partial x}
	X_{s, t}^{v}
	w 
	-
	\int_s^t 
	f_{0,1}
	(\tau, X_{s, \tau}^v ) 
	\big( \tfrac{\partial}{\partial x} X_{s, \tau}^v 
	w 
	\big)   \, d\tau
	.
	\end{split}
	\end{equation}
	The
	proof  
	of Lemma~\ref{lemma:diff_initial_value}
	is thus completed.
\end{proof}
\begin{lemma}
	\label{lemma:Differentiable}
	Let
	$ (V, \left\| \cdot \right\|_V) $ 
	be a nontrivial $ \R $-Banach space,  
	let
	$ T \in (0,\infty) $,  
	$ \angle_T = \{ (s, t) \in [0,T]^2 \colon s \leq t \} $, 
	$ f \in \mathcal{C}^{0,1}( [0,T] \times V, V) $, 
	let
	$ f_{0,1} \colon [0,T] \times V \to L( V ) $
	be the function 
	which satisfies 
	for all
	$ t \in [0,T] $, $ x \in V $ that
	$ f_{0,1}(t,x) = ( \tfrac{ \partial }{\partial x } f )(t,x) $, 
	and
	for every 
	$ x \in V $,
	$ s \in [0,T] $
	let
	$ X_{s,(\cdot)}^x = (X_{s,t}^x)_{ t \in [s,T]} \colon [s,T]
	\to V $
	be a continuous function
	which satisfies
	for all
	$ t \in [s,T] $
	that
		$ X_{s, t}^x = 
	x
	+
	\int_s^t f( \tau, X_{s, \tau}^x) \,d\tau $.
	Then 
	\begin{enumerate}[(i)] 
	\item \label{item:Diff_s}
	it holds for all
	$ x \in V $, $ t \in (0,T] $ 
	that
	$ ( [0,t] \ni s \mapsto X_{s,t}^x \in V ) \in \mathcal{C}^1( [0,t], V ) $,
	\item \label{item:JointContinuous}
	it holds that
	$ \{ (r,u) \in [0,T] \times (0,T] 
	\colon r \leq u \}
	\times V  
	\ni (s,t,x)
	\mapsto 
	\frac{ \partial }{ \partial s }
	X_{s,t}^x
	\in V $
	is continuous, 
	\item \label{item:JointContinuous2}
	it holds that there exists a unique continuous function
	$ C \colon \{ ( r, u ) \in [0,T]^2 \colon r \leq u \} \times V 
	\to V $
	which satisfies for all
	$ t \in (0, T] $, $ s \in [0,t] $, $ x \in V $
	that
	$ C_{s,t}^x = \frac{ \partial }{ \partial s }
	X_{s,t}^x $,
	\item \label{item:derSolves} it holds for all  
	$ x \in V $,
	$ t \in (0,T] $, 
	$ s \in [0,t] $
	that  
	\begin{equation}
	\begin{split} 
	\tfrac{ \partial }{ \partial s }
	X_{s,t}^x
	=
	- 
	f(s,x)
	+
	\int_s^t
	f_{0,1}
	( \tau, X_{s,\tau}^x ) 
	 (
	\tfrac{ \partial }{ \partial s }
	X_{s, \tau}^x 
	 )
	\, d \tau
	, 
	\end{split}
	\end{equation}
		\item \label{item:initialTimeDiff}
	it holds that
	$ ( \angle_T \times V \ni (s,t,x) \mapsto X_{s,t}^x \in V )
	\in \mathcal{C}^{0,0,1}( \angle_T \times V, V ) $,
	\item \label{item:identity11} 
	it holds for all 
	$ x,y \in V $, $ ( s, t ) \in \angle_T $ that
	\begin{equation}
	( 
	\tfrac{\partial}{\partial x }
	X_{s, t}^x 
	)  
	y
	=
	y
	+
	\int_s^t 
	f_{0,1}
	(\tau, X_{s, \tau}^x)
	(
	\tfrac{\partial}{\partial x } X_{s, \tau}^x 
	)y
	\, d\tau
	, 
	\end{equation} 
	\item \label{item:NiceConection}
	it holds for all
	$ x \in V $, 
	$ t \in (0,T] $,
	$ s \in [0,t] $
	that
	\begin{equation}
	\begin{split}
	\tfrac{ \partial }{ \partial s }
	X_{s,t}^x =
	-
	\big(
	\tfrac{\partial}{\partial x }
	X_{s, t}^x 
	\big)
	f(s,x)
	,
	\end{split}
	\end{equation}
	\item \label{item:Differentiable_t}
		it holds for all
		$ x \in V $,
		$ s \in [0,T) $
		that 
		$
		( [s,T] \ni t \mapsto X_{s,t}^x \in V) \in \mathcal{C}^1 ( [s,T], V )
		$,
	\item \label{item:Diff_t}
	it holds that
	$ \{ (r,u) \in [0,T) \times [0,T] 
	\colon r \leq u
	\} \times V
	\ni (s,t,x) 
	\mapsto 
	\tfrac{\partial }{\partial t }
	X_{s,t}^x
	\in V $
	is continuous, 
	\item \label{item:Diff_t2}
	it holds that there exists a unique continuous function
	$ D \colon \{ ( r, u ) \in [0,T]^2 \colon r \leq u \} \times V 
	\to V $
	which satisfies for all
	$ s \in [0,T ) $, $ t \in [s,T] $, $ x \in V $ 
	that
	$ D_{s,t}^x = \frac{ \partial }{ \partial t }
	X_{s,t}^x $,
	and
	\item \label{item:Formula_t}
	it holds for all
	$ x \in V $,
	$ s \in [0,T) $,
	$ t \in [s,T] $
	that
	$
	\tfrac{\partial }{\partial t }
	X_{s,t}^x = f(t, X_{s,t}^x) 
	$.
\end{enumerate}
\end{lemma}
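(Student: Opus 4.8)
The plan is to reduce the statement to two already established results: Lemma~\ref{lemma:diff_initial_value} (differentiability in the initial value, with the variational identity for $\tfrac{\partial}{\partial x}X^x_{s,t}$) and Corollary~\ref{Corollary:UnifCont} (joint continuity of $(s,t,x)\mapsto X^x_{s,t}$ on $\angle_T\times V$). Since Corollary~\ref{Corollary:UnifCont} shows that $(s,t)\mapsto X^x_{s,t}$ is continuous on $\angle_T$, Lemma~\ref{lemma:diff_initial_value} is applicable and yields items~\eqref{item:initialTimeDiff} and~\eqref{item:identity11} verbatim. The current-time statements are immediate too: for $x\in V$ and $s\in[0,T)$ the function $\tau\mapsto f(\tau,X^x_{s,\tau})$ is continuous on $[s,T]$, so Lemma~\ref{lemma:extensions} applied to $(t\mapsto X^x_{s,t})=x+\int_s^{(\cdot)}f(\tau,X^x_{s,\tau})\,d\tau$ gives that $[s,T]\ni t\mapsto X^x_{s,t}$ is differentiable with $\tfrac{\partial}{\partial t}X^x_{s,t}=f(t,X^x_{s,t})$; continuity of this in $t$ gives~\eqref{item:Differentiable_t} and~\eqref{item:Formula_t}, joint continuity of $(s,t,x)\mapsto f(t,X^x_{s,t})$ via Corollary~\ref{Corollary:UnifCont} gives~\eqref{item:Diff_t}, and $D^x_{s,t}:=f(t,X^x_{s,t})$ for $(s,t)\in\angle_T$ provides the continuous extension in~\eqref{item:Diff_t2}, uniqueness of which follows from density of $\{(s,t)\in\angle_T\colon s<T\}$ in $\angle_T$.

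The remaining and substantive part concerns the initial-time derivative, items~\eqref{item:Diff_s}, \eqref{item:JointContinuous}, \eqref{item:JointContinuous2}, \eqref{item:derSolves}, \eqref{item:NiceConection}. Fix $x\in V$, $t\in(0,T]$, $s\in[0,t]$. I would compute the one-sided difference quotients of $\sigma\mapsto X^x_{\sigma,t}$ at $\sigma=s$ using the flow property (Lemma~\ref{lemma:flow_property}) and, in place of Fr\'echet differentiability at a single point, the mean-value representation
\[
X^{y}_{u,t}-X^{x}_{u,t}=\Big(\int_0^1\big(\tfrac{\partial}{\partial x}X^{\,x+r(y-x)}_{u,t}\big)\,dr\Big)(y-x),
\]
which follows from Lemma~\ref{lemma:FundamentaTheoremOfCalculus}, the chain rule, and item~\eqref{item:continuity} of Lemma~\ref{lemma:diff_initial_value}. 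For $0<h\le t-s$ the flow property gives $X^x_{s,t}=X^{X^x_{s,s+h}}_{s+h,t}$, hence, by the above display with $u=s+h$ and $y=X^x_{s,s+h}$,
\[
\frac{X^x_{s+h,t}-X^x_{s,t}}{h}=-\Big(\int_0^1\big(\tfrac{\partial}{\partial x}X^{\,x+r(X^x_{s,s+h}-x)}_{s+h,t}\big)\,dr\Big)\frac{X^x_{s,s+h}-x}{h}.
\]
As $h\downarrow0$ one has $h^{-1}(X^x_{s,s+h}-x)=h^{-1}\int_s^{s+h}f(\tau,X^x_{s,\tau})\,d\tau\to f(s,x)$ by Lemma~\ref{lemma:extensions}, while the integral factor converges to $\tfrac{\partial}{\partial x}X^x_{s,t}$ uniformly in $r$ by item~\eqref{item:continuity} of Lemma~\ref{lemma:diff_initial_value} and compactness of $[0,1]$; thus the right derivative at $s$ equals $-\big(\tfrac{\partial}{\partial x}X^x_{s,t}\big)f(s,x)$. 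For $-s\le h<0$ the flow property gives instead $X^x_{s+h,t}=X^{X^x_{s+h,s}}_{s,t}$, and the same mean-value representation (now with the fixed initial time $u=s$) reduces matters to $h^{-1}(X^x_{s+h,s}-x)=h^{-1}\int_{s+h}^{s}f(\tau,X^x_{s+h,\tau})\,d\tau\to-f(s,x)$ as $h\uparrow0$, which holds by Corollary~\ref{Corollary:UnifCont} and continuity of $f$.

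Both one-sided derivatives therefore equal $-\big(\tfrac{\partial}{\partial x}X^x_{s,t}\big)f(s,x)$, so $[0,t]\ni\sigma\mapsto X^x_{\sigma,t}$ is differentiable with $\tfrac{\partial}{\partial s}X^x_{s,t}=-\big(\tfrac{\partial}{\partial x}X^x_{s,t}\big)f(s,x)$, which is item~\eqref{item:NiceConection}; since the right-hand side is continuous in $(s,t,x)$ by item~\eqref{item:continuity} of Lemma~\ref{lemma:diff_initial_value}, this gives~\eqref{item:Diff_s} and~\eqref{item:JointContinuous}, and $C^x_{s,t}:=-\big(\tfrac{\partial}{\partial x}X^x_{s,t}\big)f(s,x)$ on $\angle_T\times V$ supplies the continuous extension in~\eqref{item:JointContinuous2} (uniqueness by density of $\{(s,t)\in\angle_T\colon t>0\}$). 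Finally, substituting $y=-f(s,x)$ into the variational identity~\eqref{item:identity11} and using~\eqref{item:NiceConection} once more to rewrite $-\big(\tfrac{\partial}{\partial x}X^x_{s,\tau}\big)f(s,x)$ as $\tfrac{\partial}{\partial s}X^x_{s,\tau}$ inside the integral yields item~\eqref{item:derSolves}.

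The main obstacle I anticipate is the limit in the difference quotients, where the relevant base time does not stay fixed: in the right quotient the initial time $s+h$ moves, so one must use the integral representation of the increment in $x$ and the joint continuity of $(u,t,y)\mapsto\tfrac{\partial}{\partial x}X^{y}_{u,t}$ from item~\eqref{item:continuity} of Lemma~\ref{lemma:diff_initial_value} to pass to the limit in the integral factor; in the left quotient the initial time $s+h$ instead appears inside $f(\tau,X^x_{s+h,\tau})$, so one needs Corollary~\ref{Corollary:UnifCont} to take the limit of this average as $h\to0$. Beyond these two points the argument is routine bookkeeping with the fundamental theorem of calculus, with the continuity of composition, evaluation, and bounded-linear-operator application maps, and with compactness.
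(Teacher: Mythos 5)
Your proposal is correct, but for the substantive items \eqref{item:Diff_s}, \eqref{item:JointContinuous}, \eqref{item:JointContinuous2}, \eqref{item:derSolves}, and \eqref{item:NiceConection} it takes a genuinely different route from the paper. The paper first manufactures a candidate derivative $Y^x_{s,t}$ as the unique solution of the linear variational equation (via Lemma~\ref{lemma:existence_uniqueness}), then proves $\|h^{-1}(X^x_{s+h,t}-X^x_{s,t})-Y^x_{s,t}\|_V\to 0$ by a two-stage argument (a uniform Gronwall bound on the difference quotients followed by a Fatou/Gronwall passage to the limit, with auxiliary defect functions $g$ and $q$), so that item~\eqref{item:derSolves} comes out of the construction and item~\eqref{item:NiceConection} is obtained afterwards by uniqueness of solutions to the linear equation. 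You instead obtain item~\eqref{item:NiceConection} first and directly: the flow property (Lemma~\ref{lemma:flow_property}) converts the increment in the initial time into an increment in the initial value, the fundamental theorem of calculus along the segment (Lemma~\ref{lemma:FundamentaTheoremOfCalculus} plus item~\eqref{item:continuity} of Lemma~\ref{lemma:diff_initial_value}) represents that increment through $\tfrac{\partial}{\partial x}X$, and the joint continuity of $(u,t,y)\mapsto\tfrac{\partial}{\partial x}X^y_{u,t}$ lets you pass to the limit in both one-sided quotients; item~\eqref{item:derSolves} then follows by substituting $y=-f(s,x)$ into item~\eqref{item:identity11}, which reverses the paper's order of deduction. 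Your route is shorter and closer to the classical finite-dimensional argument, at the price of leaning entirely on the joint $\mathcal{C}^{0,0,1}$ regularity from Lemma~\ref{lemma:diff_initial_value}; the paper's route re-derives the $s$-derivative from the integral equation itself and produces the variational identity \eqref{item:derSolves} as the primary object. Both proofs handle the two delicate limits (the moving initial time $s+h$ in the right quotient, and the $h$-dependent integrand $f(\tau,X^x_{s+h,\tau})$ in the left one) with the same tools, namely Lemma~\ref{lemma:extensions} and Corollary~\ref{Corollary:UnifCont}, and your treatment of the remaining items \eqref{item:initialTimeDiff}--\eqref{item:identity11} and \eqref{item:Differentiable_t}--\eqref{item:Formula_t} coincides with the paper's.
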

\begin{proof}[Proof of Lemma~\ref{lemma:Differentiable}]
	Throughout this proof  
	let  
	$ g \colon [-T, T]^3 \times V \to \R $
	be a function which satisfies for all
	$ s \in [0,T] $, 
	$ h \in [-s,T-s] \backslash \{ 0 \} $, 
	$ \tau \in [ \max \{ s, s+h \}, T] $,
	$ x \in V $
	with
	$ X_{s+h, \tau}^x - X_{s,\tau}^x \neq 0 $
	that
	$ g(s, h, \tau, x)
	=
	\tfrac{ \| f(\tau,X_{s+h, \tau}^x ) - f( \tau, X_{s,\tau}^x )
		- 
		f_{0,1}
		( \tau, X_{s, \tau}^x )( X_{s+h, \tau}^x - X_{s, \tau}^x ) \|_V }
	{ \| X_{s+h, \tau}^x - X_{s, \tau}^x \|_V } 
	$,
	which satisfies for all
	$ s \in [0,T] $, 
	$ h \in [-s,T-s] $, 
	$ \tau \in [ \max \{ s, s+h\} , T] $,
	$ x \in V $
	with
	$ X_{s+h, \tau}^x - X_{s,\tau}^x = 0 $
	that
	$ g(s, h, \tau, x ) = 0 $,
	and which satisfies
	for all
	$ s, h \in [-T,T] $,  
	$ \tau \in [ - T, \max \{ s, s+h\} ) $,
	$ x \in V $
	that
	$ g(s, h, \tau, x ) = 0 $.
	Observe that
	Corollary~\ref{Corollary:UnifCont}
	(with
	$ V = V $,
	$ T = T $,
	$ f = f $,
	$ X_{s,t}^x = X_{s,t}^x $
	for $ (s,t) \in \angle_T $, $ x \in V $
	in the notation of 
	Corollary~\ref{Corollary:UnifCont})
	ensures that 
	\begin{equation} 
	\label{eq:Xcontinuous}
	( \angle_T \times V \ni (s,t,x) \mapsto X_{s,t}^x \in V )
	\in \mathcal{C} ( \angle_T \times V, V) 
	.
	\end{equation} 
	This
	and
	the assumption that
	$ f \in \mathcal{C}^{0,1}( [0,T] \times V, V ) $
	show that
	for every $ x \in V $ it holds that
	$ ( \angle_T \ni (s,t) \mapsto 
	f_{0,1}
	( s, X_{s, t}^x ) \in L(V) ) \in 
	\mathcal{C} ( \angle_T, L(V) ) $.
	Lemma~\ref{lemma:existence_uniqueness}
	(with
	$ V = V $,
	$ T = T $,
	$ \phi = ( \angle_T \ni (s,t) \mapsto - f(s,x) \in V ) $,
	$ A = ( \angle_T \ni (s,t) \mapsto f_{0,1}
	( s, X_{s, t}^x ) \in L(V) ) $
	for $ x \in V $
	in the notation of
	Lemma~\ref{lemma:existence_uniqueness}) therefore proves 
	that for every $ x \in V $ 
	there exists 
	a unique function
	$ Y^x \in \mathcal{C}(\angle_T, V) $ 
	such that for all
	$ (s,t) \in \angle_T $
	it holds that
	\begin{equation}
	\begin{split}
	\label{eq:Der_satisfies_equation}
	Y_{s,t}^x
	=
	- 
	f(s,x)
	+
	\int_s^t
	f_{0,1}
	( \tau, X_{s,\tau}^x ) 
	Y_{s, \tau}^x \, d \tau
	.
	\end{split}
	\end{equation}
	This ensures that there exists a function
	$ q \colon [-T,T]^3 \times V \to \R $ 
	which satisfies
	for all
	$ s \in [0,T] $,
	$ h \in [-s, T-s] \backslash \{ 0 \} $,
	$ \tau \in [ \max \{ s, s+h \},T] $,
	$ x \in V $
	that
	$ q(s, h, \tau, x ) 
	=
	\| \nicefrac{( X_{s+h, \tau}^x - X_{s, \tau}^x) }{h} - Y_{s, \tau}^x \|_V $
	and which satisfies for all
	$ s \in [0,T] $, 
	$ h \in [-T, T] $, 
	$ \tau \in [ -T, \max \{ s, s+h \} ) $,
	$ x \in V $
	that
	$ q( s, h, \tau, x ) = 0 $.
	In the next step we note that
	the triangle inequality implies
	that for all
	$ x \in V $,
	$ s \in [0,T] $,
	$ h \in [-s, T-s] \backslash \{ 0 \} $,
	$ t \in [ \max \{s+h,s\}, T] $
	it holds that
	\begin{equation}
	\begin{split}
	\label{eq:ApplyGronwalFirst}
	&
	\Big\| 
	\tfrac{ X_{s+h,t}^x - X_{s,t}^x }{ h }  
	-
	Y_{s,t}^x
	\Big\|_V
	=
	\Big\|-
	\tfrac{1}{|h|} 
	\int_{ \min \{ s, s + h \} }^{ \max \{ s, s+h \} }  f(\tau, X_{\min \{ s, s + h \}, \tau}^x ) \, d\tau 
	+ 
	f(s,x)
	\\
	&
	\quad
	+
	\int_{ \max \{ s, s+h \} }^t 
	\tfrac{ 
		f(\tau, X_{ s+h, \tau}^x ) 
		-
		f(\tau, X_{ s, \tau}^x )
	}{h}
	\, d\tau 
	-
	\int_s^t f_{0,1}
	( \tau, X_{s, \tau}^x ) Y_{s, \tau}^x \, d \tau 
	\Big\|_V
	\\
	&
	\leq
	\Big\| 
	-
	\int_{ \min \{s, s+h \} }^{
		\max \{s, s+h \} } 
	\tfrac{1}{|h|} 
	f(\tau, X_{ \min \{s, s + h \}, \tau}^x)
	\, d \tau 
	+
	f(s,x)
	\Big\|_V
	+
	\Big\|
	\int_s^{ \max \{ s, s + h \} }
	f_{0,1}
	( \tau, X_{s, \tau}^x ) 
	Y_{s, \tau}^x
	\, d \tau 
	\Big\|_V
	\\
	&
	\quad
	+
	\int_{ \max \{ s, s + h \} }^t
	\Big\|
	\tfrac{ f ( \tau, X_{ s+h, \tau}^x ) - f( \tau, X_{s, \tau}^x ) }
	{ h } 
	-
	f_{0,1}
	( \tau, X_{s, \tau}^x ) 
	Y_{s, \tau}^x
	\Big\|_V 
	\, d \tau 
	.
	\end{split}
	\end{equation}
	Next we intend to prove an upper bound
	for the l.h.s.\ of~\eqref{eq:ApplyGronwalFirst}.
	To do so, we will estimate
	the terms on the
	r.h.s.\ of~\eqref{eq:ApplyGronwalFirst}
	separately.
	For this note that the fact that 
	$ \forall \, x \in V \colon ( \angle_T \ni (s,t) \mapsto X_{s,t}^x \in V )
	\in \mathcal{C} ( \angle_T , V) $
	shows that for all
	$ x \in V $,
	$ s \in [0,T] $
	it holds that 
	\begin{equation}
	\begin{split}
	\label{eq:sup1}
	&
	\sup_{h \in [-s,T-s] 
		\backslash \{ 0 \}}
	\Big\| 
	\int_{ \min \{s, s+h\}}^{ \max \{s, s+h\} } 
	\tfrac{1}{|h|} 
	f(\tau, X_{ \min \{s, s+h\}, \tau}^x)
	\, d \tau
	-
	f(s,x)
	\Big\|_V
	< 
	\infty
	.  
	\end{split}
	\end{equation} 
	In addition,
	the assumption that
	$ \forall \, x \in V, s \in [0,T] \colon
	( [s,T] \ni t \mapsto X_{s,t}^x \in V )
	\in
	\mathcal{C}( [s,T], V ) $ 
	and
	the fact that
	$ \forall \, x \in V \colon 
	(\angle_T \ni (s,t) \mapsto Y_{s,t}^x \in V)
	\in \mathcal{C}( \angle_T, V ) $  
	ensure for all
	$ x \in V $,
	$ s \in [0,T] $
	that
	\begin{equation}
	\begin{split}
	\label{eq:sup2}
	\sup_{ h \in [-s, T-s] 
		\backslash \{ 0 \} }
	\Big\| 
	\int_s^{ \max \{ s, s + h \} }
	f_{0,1}
	( \tau, X_{s, \tau}^x ) 
	Y_{s, \tau}^x
	\, d \tau 
	\Big\|_V  
	< \infty.
	\end{split}
	\end{equation}
	Moreover, the triangle inequality proves that
	for all 
	$ x \in V $,
	$ s \in [0,T] $,
	$ h \in [-s, T-s] \backslash \{0\} $,
	$ t \in [ \max \{ s, s + h \} , T] $ 
	it holds that
	\begin{equation} 
	\begin{split} 
	\label{eq:triangle}
	&
	\int_{ \max \{ s, s + h \} }^t
	\Big\|
	\tfrac{ f ( \tau, X_{s+h, \tau}^x ) - f( \tau, X_{s, \tau}^x ) }
	{ h } 
	-
	f_{0,1}
	( \tau, X_{s, \tau}^x ) 
	Y_{s, \tau}^x
	\Big\|_V 
	\, d \tau
	\\
	&
	\leq
	\int_{ \max \{ s, s + h \} }^t
	\Big\|
	\tfrac{ f(\tau, X_{s+h, \tau}^x ) - f( \tau, X_{s,\tau}^x )
		- 
		f_{0,1}
		( \tau, X_{s, \tau}^x )( X_{s+h, \tau}^x - X_{s, \tau}^x ) }
	{ h }
	\Big\|_V
	\, d \tau
	\\
	&
	\quad 
	+
	\int_{ \max \{ s, s + h \} }^t 
	\Big\|
	\tfrac{ 
		f_{0,1}
		(\tau, X_{s, \tau}^x)( X_{s+h, \tau}^x - X_{s, \tau}^x 
		-
		Y_{s, \tau}^x h) 
	}
	{ h }
	\Big\|_V 
	\, d \tau
	.
	\end{split}
	\end{equation}
	Furthermore,
	note that~\eqref{eq:Xcontinuous}
	assures that there exists 
	$ \delta \colon (0,\infty) \times V \to (0,\infty) $
	such that for all
	$ (s_1, t_1), (s_2, t_2) \in \angle_T $, 
	$ \varepsilon \in (0, \infty) $,
	$ x \in V $
	with
	$ \max \{ | s_1 - s_2 |, | t_1 - t_2 | \} < \delta_\varepsilon^x $
	it holds that
	\begin{equation} 
	\label{eq:UniformContinuous}
	\| X_{s_1, t_1}^x - X_{s_2, t_2}^x \|_V < \varepsilon 
	.
	\end{equation} 
	This
	implies that for all
	$ \varepsilon \in (0, \infty ) $,
	$ x \in V $,
	$ s \in [0,T] $, 
	$ h \in [-s, T-s] \backslash \{0\} $,
	$ t \in [ \max \{ s, s + h \}, T] $ 
	with
	$ | h | < \delta_{ \varepsilon }^x $
	it holds
	that
	\begin{equation} 
	\begin{split} 
	\label{eq:triangle1}
	& 
	\int_{ \max \{ s, s + h \} }^t
	\Big\|
	\tfrac{ f(\tau, X_{s+h, \tau}^x ) - f( \tau, X_{s,\tau}^x )
		- 
	f_{0,1}
		( \tau, X_{s, \tau}^x )( X_{s+h, \tau}^x - X_{s, \tau}^x ) }
	{ h }
	\Big\|_V
	\, d \tau
	\\
	&
	\leq
	\int_{ \max \{ s, s + h \} }^t 
	\Big(
	\tfrac{ \| f(\tau, X_{s+h, \tau}^x ) - f( \tau, X_{s,\tau}^x ) \|_V}
	{ | h | }
	+
	\tfrac{  
		\| f_{0,1}
		( \tau, X_{s, \tau}^x ) \|_{L(V)} 
		\| X_{s+h, \tau}^x - X_{s, \tau}^x \|_V }
	{ | h | }
	\Big) 
	\, d \tau
	\\
	&
	\leq  
\bigg(
\sup_{ \tau \in [0,T] } 
\sup_{  y \in V, \| x - y \|_V \leq \varepsilon }
\sup_{ 
		z \in V \backslash \{ y \}, 
		\| x-z \|_V \leq \varepsilon } 
	\tfrac{ \| f(\tau, y ) - f( \tau, z ) \|_V}
	{ \| y - z \|_V }
	+
\sup_{ \tau \in [s,T] }
\| f_{0,1}
( \tau, X_{s, \tau}^x ) \|_{L(V)}
\bigg) 
\\
&
\quad 
\cdot 
	\int_{ \max \{ s, s + h \} }^t  
	\tfrac{ \| X_{s+h, \tau}^x - X_{s, \tau}^x \|_V }
	{ | h | } 
	\, d \tau 
	.
	\end{split}
	\end{equation}
	Therefore, it holds for all
	$ \varepsilon \in (0, \infty) $,
	$ x \in V $,
	$ s \in [0,T] $,
	$ h \in [-s, T-s] \backslash \{ 0 \} $,
	$ t \in [\max \{ s, s+h\}, T] $
	with
	$ | h | < \delta_\varepsilon^x $  
	that
	\begin{equation} 
	\begin{split} 
	\label{eq:triangle_more}
	& 
	\int_{ \max \{ s, s + h \} }^t
	\Big\|
	\tfrac{ f(\tau, X_{s+h, \tau}^x ) - f( \tau, X_{s,\tau}^x )
		- 
		f_{0,1}
		( \tau, X_{s, \tau}^x )( X_{s+h, \tau}^x - X_{s, \tau}^x ) }
	{ h }
	\Big\|_V
	\, d \tau
	\\
	& 
	\leq 
\bigg(
\sup_{ \tau \in [0,T] } \sup_{   y \in V, \| x - y \|_V \leq \varepsilon}
\sup_{ 
		z \in V \backslash \{ y \}, 
		\| x - z \|_V \leq \varepsilon } 
\tfrac{ \| f(\tau, y ) - f( \tau, z ) \|_V}
{ \| y - z \|_V }
+
\sup_{ \tau \in [s,T] }
\| f_{0,1}
( \tau, X_{s, \tau}^x ) \|_{L(V)}
\bigg) 
\\
&
\quad 
\cdot 
	\Big( 
	\int_{ \max \{ s, s + h \} }^t  
	\tfrac{ \| X_{s+h, \tau}^x - X_{s, \tau}^x
		- Y_{s, \tau}^x h \|_V }
	{ | h | } 
	\, d \tau
	+
	\int_{ \max \{ s, s + h \} }^t
	\| Y_{s, \tau}^x \|_V \, d \tau
	\Big)
	.
	\end{split}
	\end{equation}
	Furthermore, observe that 
	for all
	$ x \in V $,
	$ s \in [0,T] $,
	$ h \in [-s, T-s] \backslash \{0\} $,
	$ t \in [\max \{ s, s + h \}, T] $ 
	it holds that
	\begin{equation} 
	\begin{split} 
	\label{eq:triangle2}
	&
	\int_{ \max \{ s, s + h \} }^t 
	\Big\|
	\tfrac{ 
		f_{0,1}
		(\tau, X_{s, \tau}^x)( X_{s+h, \tau}^x - X_{s, \tau}^x 
		-
		Y_{s, \tau}^x h) 
	}
	{ h }
	\Big\|_V 
	\, d \tau
	\\
	&
	\leq
	\int_{ \max \{ s, s + h \} }^t
	 \| 	
	f_{0,1}
	(\tau, X_{s, \tau}^x) 
	 \|_{L(V)}   
	\tfrac{ 
		\| X_{s+h, \tau}^x - X_{s, \tau}^x 
		-
		Y_{s, \tau}^x h \|_V
	}
	{ | h | } 
	\, d \tau
	\\
	&
	\leq
	\sup\nolimits_{ \tau \in [s,T] }
	\| f_{0,1}( \tau, X_{s, \tau }^x ) \|_{ L(V) }
	\int_{ \max \{ s, s + h \} }^t  
	\tfrac{ 
		\| X_{s+h, \tau}^x - X_{s, \tau}^x 
		-
		Y_{s, \tau}^x h \|_V
	}
	{ | h | } 
	\, d \tau
	.
	\end{split}
	\end{equation}
	Combining~\eqref{eq:ApplyGronwalFirst},
	\eqref{eq:triangle},
	and
	\eqref{eq:triangle_more}
	hence
	proves that
	for all
	$ \varepsilon \in (0, \infty ) $,
	$ x \in V $,
	$ s \in [0,T] $,
	$ h \in [-s, T-s] \backslash \{0\} $,
	$ t \in [ \max \{ s, s + h \}, T] $ 
	with
	$ | h | < \delta_{ \varepsilon }^x $ 
	it holds that
	\begin{equation}
	\begin{split} 
	&
	\Big\| 
	\tfrac{ X_{s+h,t}^x - X_{s,t}^x }{ h }  
	-
	Y_{s,t}^x
	\Big\|_V
	\\
	&
	\leq 
	\Big\| 
	\int_{ \min \{ s, s+h\} }^{ \max \{s, s+h\} } 
	\tfrac{1}{ | h | } 
	f(\tau, X_{ \min \{s, s+h\}, \tau}^x)
	\, d \tau
	-
	f(s,x)
	\Big\|_V
	+ 
	\Big\| 
	\int_s^{ \max \{ s, s + h \} }
	f_{0,1}
	( \tau, X_{s, \tau}^x ) 
	Y_{s, \tau}^x
	\, d \tau  
	\Big\|_V
	\\
	&
	\quad 
	+
\bigg(
\sup_{ \tau \in [0,T] } 
\sup_{   y \in V, \| x - y \|_V \leq \varepsilon }
\sup_{ 
		z \in V \backslash \{ y \},
		\| x-z \|_V \leq \varepsilon } 
\tfrac{ \| f(\tau, y ) - f( \tau, z ) \|_V}
{ \| y - z \|_V }
+
2
\sup_{ \tau \in [s,T] }
\| f_{0,1}
( \tau, X_{s, \tau}^x ) \|_{L(V)}
\bigg)  
\\
&
\quad
\cdot 
	\Big(
	\int_{ \max \{s, s+h\}  }^t
	\Big\|
	\tfrac{ X_{s+h, \tau}^x - X_{s, \tau}^x }{ h } - Y_{s, \tau}^x
	\Big\|_V 
	\, d \tau 
	+
	\int_{ \max \{s, s+h\} }^t \| Y_{s,\tau}^x \|_V \, d \tau 
	\Big) 
	%
	. 
	\end{split}
	\end{equation}
The fact that
$ \forall \, x \in V \colon 
(\angle_T \ni (s,t) \mapsto Y_{s,t}^x \in V)
\in \mathcal{C}( \angle_T, V ) $,
 \eqref{eq:sup1},
 \eqref{eq:sup2},
 and
 Lemma~\ref{lemma:Nicer_assumption}
 (with
 $ V = V $,
 $ T = T $,
 $ x_0 = x_0 $,
 $ f = f $
 in the notation of
 Lemma~\ref{lemma:Nicer_assumption})
	therefore imply that
	for every
	$ x \in V $, 
	$ s \in [0, T] $ 
	there exist
	$ \varepsilon, C \in (0,\infty) $
	such that for all
	$ h \in [-s, T-s] \backslash \{0\} $,
	$ t \in [ \max \{ s, s+h\}, T ] $
	with
	$ | h | < \delta_\varepsilon^x $
	it holds that
	\begin{equation}
	\begin{split} 
	&
	\Big\| 
	\tfrac{ X_{s+h,t}^x - X_{s,t}^x }{ h }  
	-
	Y_{s,t}^x
	\Big\|_V
	\leq
    C
	+
	C 
	\int_{\max \{s,s+h\}}^t
	\Big\|
	\tfrac{ X_{s+h, \tau}^x - X_{s, \tau}^x }{ h } - Y_{s, \tau}^x
	\Big\|_V 
	\, d \tau
	< \infty
	. 
	\end{split}
	\end{equation}
	Gronwall's lemma 
	hence ensures that
	for all
	$ x \in V $, 
	$ s \in [0,T ] $
	there exist
	$ \varepsilon, C \in (0,\infty) $
	such that for all
	$ h \in [-s, T - s] \backslash \{0\} $,
	$ t \in [ \max \{ s, s+h\}, T ] $ 
	with
	$ | h | < \delta_{ \varepsilon }^x $
	it holds that
	\begin{equation}
	\begin{split}
	\label{eq:bounded}
	&
	\Big\| 
	\tfrac{ X_{s+h,t}^x - X_{s,t}^x }{ h }  
	-
	Y_{s,t}^x
	\Big\|_V
	\leq
	C
	\exp ( C ( t - \max \{s, s + h\} ) )
	\leq
	C e^{C t}
	.
	\end{split}
	\end{equation}
	In the next step
	we intend to establish that
	for all
	$ x \in V $,
	$ t \in (0,T] $,
	$ s \in [0,t] $
	it holds that
	\begin{equation}
	\label{eq:LimExists}
	\limsup_{ ( [-s, t-s] \backslash \{0\} ) \ni h \to 0 } 
	\Big\|
	\tfrac{ X_{s+h, t}^x - X_{s, t}^x }{ h } - Y_{s, t}^x
	\Big\|_V 
	=
	0.
	\end{equation}
	For this we will analyze
	the terms on the
	r.h.s.\ of~\eqref{eq:ApplyGronwalFirst}
	separately.
	First, note that
	Lemma~\ref{lemma:Lebesgues_diff} 
	(with
	$ V = V $,
	$ a = s $,
	$ b = T $,
	$ f = ( [s,T] \ni \tau \mapsto f ( \tau, X_{s, \tau}^x ) \in V ) $
	for
	$ s \in [0,T) $,
	$ x \in V $ 
	in the notation of
	Lemma~\ref{lemma:Lebesgues_diff})
	shows that for all
	$ x \in V $,
	$ s \in [0,T) $ 
	it holds that
	\begin{equation}
	\begin{split}
	\label{eq:limsup1}
	&
	\limsup_{ (0, T-s ] \ni h \to 0}
	\Big\| 
	\int_s^{s+h} 
	\tfrac{1}{h} 
	f(\tau, X_{s, \tau}^x)
	\, d \tau
	-
	f(s,x)
	\Big\|_V
	=
	0
	.  
	\end{split}
	\end{equation}
	In addition, 
	\eqref{eq:Xcontinuous} 
	implies  
	for all 
	$ x \in V $,
	$ s \in (0,T] $   
	that
	\begin{equation}
	\begin{split}
	& 
	\limsup_{ [-s,0) \ni h \to 0 }
	\Big\| 
	\int_{s+h}^s 
	\tfrac{1}{h}  
	f(\tau, X_{s+h, \tau}^x) 
	\, d \tau 
	+ f(s,x)
	\Big\|_V
	\\
	&
	=
	\limsup_{ [-s,0) \ni h \to 0 }
	\Big\| 
	\int_{s+h}^s 
	\tfrac{1}{h}
	(  
	f(\tau, X_{s+h, \tau}^x)  
	- f(s,x)
	)
	\, d \tau
	\Big\|_V
	\\
	&
	\leq
	\limsup_{ [-s,0) \ni h \to 0 }
	\Big(
	\tfrac{ 1 }{ | h | }
	\int_{s+h}^s  
	\, d\tau
	\cdot 
	\sup_{ \tau \in [s+h, s] }
	\| f( \tau, X_{s+h, \tau}^x ) - f(s,x) \|_V
	\Big)
	\\
	&
	=
	\limsup_{ [-s,0) \ni h \to 0 } 
	\sup_{ \tau \in [s+h, s] }
	\| f( \tau, X_{s+h, \tau}^x ) - f(s,x) \|_V
	=
	0.
	\end{split} 
	\end{equation} 
	This and~\eqref{eq:limsup1} prove for all
	$ x \in V $,
	$ s \in [0,T] $
	that
	\begin{equation}
	\begin{split}
	\label{eq:limsup1a}
	&
	\limsup_{ ( [-s, T-s ]\backslash \{0\} )\ni h \to 0}
	\Big\| 
	\int_{ \min \{s, s+h\}}^{ \max \{s, s+h\} } 
	\tfrac{1}{ |h|} 
	f(\tau, X_{ \min \{s, s+h\}, \tau}^x)
	\, d \tau
	-
	f(s,x)
	\Big\|_V
	=
	0
	.  
	\end{split}
	\end{equation}
	Furthermore, observe that the fact that
	$ \forall \, x \in V \colon 
	(\angle_T \ni (s,t) \mapsto Y_{s,t}^x \in V)
	\in \mathcal{C}( \angle_T, V ) $  
	and~\eqref{eq:Der_satisfies_equation}
	prove for all
	$ x \in V $,
	$ s \in [0,T] $
	that
	\begin{equation}
	\begin{split}
	\label{eq:limsup2}
	&
	\limsup_{ ( [-s, T-s ]\backslash \{0\} )\ni h \to 0}
	\Big\|  
	\int_s^{ \max \{ s, s + h \} }
	f_{0,1}
	( \tau, X_{s, \tau}^x ) 
	Y_{s, \tau}^x
	\, d \tau 
	\Big\|_V 
	\\
	&
	=
	\limsup_{ ( [-s, T-s ]\backslash \{0\} )\ni h \to 0}
	\| Y_{s, \max \{ s, s + h \} }^x 
	+ f(s,x) \|_V
	=
	0.
	\end{split}
	\end{equation}
	Moreover, note that
	the triangle inequality,
	\eqref{eq:UniformContinuous},
	and
	Lemma~\ref{lemma:Nicer_assumption}
	(with
	$ V = V $,
	$ T = T $,
	$ x_0 = x $,
	$ f = f $
	for
	$ x \in V $
	in the notation of
	Lemma~\ref{lemma:Nicer_assumption})
	demonstrate that
	for all
	$ s \in [0,T] $, $ x \in V $
	there exists 
	$ \varepsilon \in (0, \infty) $
	such that
	for all
	$ h \in [-s,T-s] $, $ t \in [s, T] $
	with
	$ | h | < \delta_{ \varepsilon }^x $
	it holds that
	\begin{equation}
	\begin{split} 
	&
	| g(s,h,t,x) |
	\leq
	\bigg(
	\sup_{ \tau \in [0,T] } 
	\sup_{ \substack{  y \in V, \| x - y \|_V \leq \varepsilon,
			\\
			z \in  V \backslash \{ y \},
			\| x-z \|_V \leq \varepsilon } }
	\tfrac{ \| f(\tau, y ) - f( \tau, z ) \|_V}
	{ \| y - z \|_V }
	+
	\sup_{ \tau \in [s,T] }
	\| f_{0,1}
	( \tau, X_{s, \tau}^x ) \|_{L(V)}
	\bigg) 
	<
	\infty 
	.
	\end{split} 
	\end{equation}
	Fatou's lemma 
	and~\eqref{eq:bounded} 
therefore
	ensure that
	for all
	$ x \in V $, 
	$ s \in [0,T] $
	there exist 
	$ \varepsilon, C \in (0,\infty) $
	such that for all
	$ t \in [s,T] $
	with
	$ | t | + |s | > 0 $
	 it holds that
	\begin{equation} 
	\begin{split} 
	\label{eq:Limit0}
	& 
	\limsup_{ ( [- \min \{ s, \delta_\varepsilon^x \}, 
		\min \{ t - s, \delta_\varepsilon^x \} ] \backslash \{0\} )\ni h \to 0}
	\int_{ \max \{s,s+h\}}^t
	\Big\|
	\tfrac{ f(\tau, X_{s+h, \tau}^x ) - f( \tau, X_{s,\tau}^x )
		- 
	f_{0,1}
		( \tau, X_{s, \tau}^x )( X_{s+h, \tau}^x - X_{s, \tau}^x ) }
	{ h }
	\Big\|_V
	\, d \tau
	\\
	&
	\leq  
\limsup_{ ( [- \min \{ s, \delta_\varepsilon^x \}, 
	\min \{ t - s, \delta_\varepsilon^x \} ] \backslash \{0\} )\ni h \to 0}
	\int_s^t  
	g( s, h, \tau, x)
	\, d \tau
	\\
	&
	\quad 
	\cdot 
	\bigg(
	\sup_{ v \in [- \min \{ s, \delta_\varepsilon^x \}, 
		\min \{ t - s, \delta_\varepsilon^x \} ]
		\backslash \{0\}, \tau \in [\max \{ s, s + v \}, T]}
	\tfrac{ \| X_{s + v, \tau}^x - X_{s, \tau}^x \|_V }
	{ | v | }
	\bigg)
	\\
	&
	\leq  
	\int_s^t  
	\limsup_{ ( [-s, t-s ]\backslash \{0\} )\ni h \to 0}
	g( s, h, \tau, x)
	\, d \tau
	\cdot 
	\big(
	C e^{C T} + 
	\sup\nolimits_{ \tau \in [s, T]}
	\| Y_{s, \tau}^x \|_V
	\big)
	=
	0
	.
	\end{split}
	\end{equation}
	Combining~\eqref{eq:ApplyGronwalFirst},
	\eqref{eq:triangle}, 
	\eqref{eq:triangle2},
	\eqref{eq:bounded},
	\eqref{eq:limsup1a},
	\eqref{eq:limsup2},  
	and Fatou's lemma
	hence 
	proves that 
	for all
	$ x \in V $,
	$ s \in [0,T] $, 
	$ t \in [s,T] $ 
	with
	$ | t | + |s | > 0 $
	it holds that
	\begin{equation}
	\begin{split} 
	&
	\limsup_{  ( [-s, t-s] \backslash \{0\} ) \ni h \to 0 }
	q( s, h, t, x)  
	\\
	&            
	\leq  
\sup_{ \tau \in [s,T] }
\| f_{0,1}
( \tau, X_{s, \tau}^x ) \|_{L(V)}
	\limsup_{  ( [-s, t-s] \backslash \{0\} ) \ni h \to 0 }
	\int_{ \max \{s,s+h\}}^t  
	\tfrac{ 
		\| X_{s+h, \tau}^x - X_{s, \tau}^x 
		-
		Y_{s, \tau}^x h \|_V
	}
	{ | h | } 
	\, d \tau 
	\\
	&
	=
\sup_{ \tau \in [s,T] }
\| f_{0,1}
( \tau, X_{s, \tau}^x ) \|_{L(V)}
	\limsup_{  ( [-s, t-s] \backslash \{0\} ) \ni h \to 0 }
	\int_s^t  
	q(s,h,\tau,x)
	\, d \tau 
	\\
	&
	\leq 
	\sup_{ \tau \in [s,T] }
	\| f_{0,1}
	( \tau, X_{s, \tau}^x ) \|_{L(V)}
	\int_s^t
	\limsup_{  ( [-s, t-s] \backslash \{0\} ) \ni h \to 0 }
	q( s, h, \tau, x )  
	\, d \tau
	< \infty
	. 
	\end{split}
	\end{equation}
	Gronwall's lemma hence establishes~\eqref{eq:LimExists}.
	In particular, we obtain that
	for all
	$ x \in V $,
	$ t \in (0,T] $,
	$ s \in [0,t] $
	it holds that
	$ [0,t] \ni u \mapsto X_{u, t}^x \in V $
	is differentiable
	and that
	\begin{equation} 
	\label{eq:CrucialIdentity}
	\tfrac{\partial}{\partial s }
	X_{s,t}^x = Y_{s,t}^x 
	.
	\end{equation}
	This
	and~\eqref{eq:Der_satisfies_equation} establish items~\eqref{item:Diff_s}
	and~\eqref{item:derSolves}.
	Moreover,
	note that~\eqref{eq:Xcontinuous} and
	Lemma~\ref{lemma:diff_initial_value}
	(with
	$ V = V $,
	$ T = T $,
	$ f = f $,
	$ X_{s,t}^x = X_{s,t}^x $,
	$ y = - f(s,x) $
	for
	$ x \in V $,
	$ (s,t) \in \angle_T $
	in the notation of Lemma~\ref{lemma:diff_initial_value})
	prove that items~\eqref{item:initialTimeDiff}
	and~\eqref{item:identity11} hold 
	and 
	that for all
	$ x \in V $, $ ( s, t ) \in \angle_T $ 
	it holds that
	\begin{equation}
	\label{eq:Uniqueness}
	-
	\big(
	\tfrac{\partial}{\partial x }
	X_{s, t}^x 
	\big)
	f(s,x)
	=
	-
	f(s,x)
	+
	\int_s^t 
	f_{0,1}
	(\tau, X_{s, \tau}^x)
	\big(
	-
	\big(
	\tfrac{\partial}{\partial x } X_{s, \tau}^x 
	\big)
	f(s,x)
	\big)
	\, d\tau
	.
	\end{equation}
	Furthermore, observe that
	the fact that
	$ ( \angle_T \times V \ni (s,t,x) \mapsto
	\frac{ \partial }{ \partial x } X_{s,t}^x \in L(V) )
	\in 
	\mathcal{C}( \angle_T \times V, L(V) ) $,
	the fact that
	$ ( \angle_T \times V \ni (s,t,x) \mapsto f(t,x) \in V) \in \mathcal{C}( \angle_T \times V, V ) $,
	and the fact that
	$ ( L(V) \times V \ni (A, x) \mapsto Ax \in V )
	\in \mathcal{C}( L(V) \times V, V ) $
	ensure that
	\begin{equation} 
		\label{eq:JointCOnt}
		\big( 
		\angle_T \times V \ni (s,t,x) \mapsto -
		\big(
		\tfrac{\partial}{\partial x }
		X_{s, t}^x 
		\big)
		f(s,x) 
		\big)
		\in \mathcal{C}( \angle_T \times V, V )
		.
		\end{equation}  
	Combining this with~\eqref{eq:Der_satisfies_equation}
	and~\eqref{eq:Uniqueness}
	demonstrates that for all
	$ x \in V $,
	$ (s,t) \in \angle_T $ it holds that
	\begin{equation}
	\begin{split}
	Y_{s,t}^x =
	-
	\big(
	\tfrac{\partial}{\partial x }
	X_{s, t}^x 
	\big)
	f(s,x)
	.
	\end{split}
	\end{equation}
	Equations~\eqref{eq:CrucialIdentity}
	and~\eqref{eq:JointCOnt}
	therefore
	establish items~\eqref{item:JointContinuous},
	\eqref{item:JointContinuous2},    
	and~\eqref{item:NiceConection}.
	Next observe that
	Lemma~\ref{lemma:extensions}
	(with
	$ V = V $,
	$ a = s $,
	$ b = T $,
	$ f = ( [s,T] \ni t \mapsto f( t, X_{s,t}^x ) \in V ) $,
	$ F = ( [s,T] \ni t \mapsto X_{s,t}^x \in V ) $
	for 
	$ s \in [0,T) $,
	$ x \in V $
	in the notation of Lemma~\ref{lemma:extensions}) 
	proves that for all
	$ x \in V $,
	$ s \in [0,T) $,
	$ t \in [s,T] $
	it holds that
	\begin{equation}
	\begin{split}
	\label{eq:used_twice}
	\limsup_{ ( [ s-t,T - t ] \backslash \{ 0 \} ) \ni h \to 0 }
	\tfrac{
		\| X_{s, t + h }^x - X_{s,t}^x
		-
		f(t, X_{s,t}^x ) h
		\|_V
	}{ |h| }
	=
	0
	.
	\end{split}
	\end{equation}
	This ensures that for all
	$ x \in V $,
	$ s \in [0,T) $ it holds that
	$ [s,T] \ni t \mapsto X_{s,t}^x \in V $
	is differentiable
	and that
	for all 
	$ x \in V $,
	$ s \in [0,T) $,
	$ t \in [s,T] $
	it holds that
	\begin{equation} 
	\label{eq:smooth1}
	\tfrac{\partial }{\partial t }
	X_{s,t}^x = f(t, X_{s,t}^x) 
	.
	\end{equation} 
	Combining this with~\eqref{eq:Xcontinuous}
	establishes
	items~\eqref{item:Differentiable_t},
	\eqref{item:Diff_t},
	\eqref{item:Diff_t2},
	and~\eqref{item:Formula_t}. 
	The proof of Lemma~\ref{lemma:Differentiable}
	is thus completed.
\end{proof}
\section{Alekseev-Gr\"obner formula}
\label{subsection:AG}
In this section we
combine
Proposition~\ref{proposition:Banach_space_alekseev_grobner},
Lemma~\ref{lemma:flow_property},
and 
Lemma~\ref{lemma:Differentiable} 
to establish
in Corollary~\ref{corollary:Alekseev_grobner} an extension of the Alekseev-Gr\"obner formula (cf., e.g.,
 Hairer et al.\ \cite[Theorem~14.5 in Chapter~I]{HairerNonstiffProblems}) for Banach space valued functions.
\begin{lemma}
	\label{lemma:ExtendedF}  
	Let 
	$ ( V, \left\| \cdot \right \|_V ) $
	be a nontrivial $ \R $-Banach space,
	let
	$ ( W, \left\| \cdot \right \|_W ) $
	be an $ \R $-Banach space,
	and
	let $ a \in \R $,
	$ b \in (a,\infty) $,
	$ \phi \in  \mathcal{C}^{0, 1}([a, b] \times V, W) $
	satisfy
	for all $ x \in V $ that
	$ ( [a,b] \ni t \mapsto \phi(t,x) \in W )
	\in \mathcal{C}^1( [a,b], W ) $
	and
	$ ( [a,b] \times V \ni (t,y) \mapsto
	(\frac{ \partial }{ \partial t} \phi)(t,y)
	\in W )
	\in \mathcal{C}( [a,b] \times V, W ) $.
	Then there exists
	$ \Phi \in  \mathcal{C}^1(\R \times V, W) $
	such that for all
	$ t \in [a,b] $,   
	$ x \in V $ 
	it holds that
	$ \phi(t,x) = \Phi(t,x) $.
\end{lemma}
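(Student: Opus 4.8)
The plan is to build $\Phi$ from $\phi$ by a Hestenes-type $\mathcal{C}^1$ reflection across the two endpoints $a$ and $b$, and then to multiply the resulting function by a $\mathcal{C}^1$ cut-off in the time variable in order to extend it to all of $\R\times V$. First I would record that $\phi\in\mathcal{C}^1([a,b]\times V,W)$: by the hypothesis $\phi\in\mathcal{C}^{0,1}([a,b]\times V,W)$ the partial Fr\'echet derivative $(\tfrac{\partial}{\partial x}\phi)$ exists and is jointly continuous, by the additional hypotheses the partial derivative $(\tfrac{\partial}{\partial t}\phi)$ exists and is jointly continuous, and the classical criterion that a map with continuous partial derivatives is continuously Fr\'echet differentiable then yields that $\phi$ is $\mathcal{C}^1$ with $\phi'(t,x)(h,v)=h\,(\tfrac{\partial}{\partial t}\phi)(t,x)+(\tfrac{\partial}{\partial x}\phi)(t,x)v$.

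Next, setting $\delta=\tfrac{b-a}{2}$, I would define $\widetilde\Phi\colon[a-\delta,b+\delta]\times V\to W$ by
\begin{equation}
\widetilde\Phi(t,x)=
\begin{cases}
3\,\phi(2a-t,x)-2\,\phi(3a-2t,x) & \text{if } t\in[a-\delta,a],\\
\phi(t,x) & \text{if } t\in[a,b],\\
3\,\phi(2b-t,x)-2\,\phi(3b-2t,x) & \text{if } t\in[b,b+\delta].
\end{cases}
\end{equation}
The elementary checks to carry out are: (a) for $t\in[a-\delta,a]$ one has $2a-t,\,3a-2t\in[a,b]$ and for $t\in[b,b+\delta]$ one has $2b-t,\,3b-2t\in[a,b]$, so $\phi$ is only ever evaluated on its domain; (b) the three pieces agree at $t=a$ and at $t=b$ (e.g.\ $3\phi(a,x)-2\phi(a,x)=\phi(a,x)$); and (c) the coefficients $3$ and $-2$ are exactly those for which, in addition to matching values, the $\tfrac{\partial}{\partial t}$- and $\tfrac{\partial}{\partial x}$-partials of the three pieces agree at $t=a$ and at $t=b$. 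Combining (b), (c), the chain rule for the $\mathcal{C}^1$ map $\phi$ composed with the affine $\mathcal{C}^\infty$ maps $t\mapsto 2a-t$, $t\mapsto 3a-2t$, $t\mapsto 2b-t$, $t\mapsto 3b-2t$, and again the criterion ``continuous partials implies $\mathcal{C}^1$'' gives $\widetilde\Phi\in\mathcal{C}^1([a-\delta,b+\delta]\times V,W)$.

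Finally, I would fix a function $\chi\in\mathcal{C}^1(\R,[0,1])$ with $\chi(t)=1$ for all $t\in[a,b]$ and $\chi(t)=0$ for all $t\in\R$ with $t\leq a-\delta$ or $t\geq b+\delta$ (an explicit such $\chi$ patched together from cubic Hermite transitions will do), noting that then $\chi'(a-\delta)=\chi'(b+\delta)=0$ by continuity of $\chi'$. Defining $\Phi\colon\R\times V\to W$ by $\Phi(t,x)=\chi(t)\,\widetilde\Phi(t,x)$ for $t\in[a-\delta,b+\delta]$ and $\Phi(t,x)=0$ for $t\notin[a-\delta,b+\delta]$, the vanishing of $\chi$ and $\chi'$ at $a-\delta$ and $b+\delta$ forces $\chi\widetilde\Phi$ together with its $t$- and $x$-partials to vanish there, so $\Phi$ glues $\mathcal{C}^1$-ly with the zero function; with the product rule on $[a-\delta,b+\delta]\times V$ this shows that $\Phi$ has jointly continuous partial derivatives on $\R\times V$, hence $\Phi\in\mathcal{C}^1(\R\times V,W)$, and $\Phi(t,x)=\chi(t)\widetilde\Phi(t,x)=\phi(t,x)$ for all $(t,x)\in[a,b]\times V$. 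I expect the main obstacle to be purely bookkeeping, namely the first-order matching of the three reflected pieces at the junctions $t=a,\,t=b$ and of $\Phi$ with $0$ at $t=a-\delta,\,t=b+\delta$, which is what allows ``continuous partial derivatives'' to be upgraded to ``continuously Fr\'echet differentiable'' at all of these boundary points.
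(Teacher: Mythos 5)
Your construction is correct, and it rests on the same two pillars as the paper's proof: a reflection of $\phi$ across the endpoints $t=a$ and $t=b$ that matches values and both first-order partials at the junctions, followed by the criterion that jointly continuous partial derivatives upgrade to continuous Fr\'echet differentiability (the paper cites Coleman, Corollary~3.4, for exactly this step). The difference is in the details of the reflection and of the globalization. The paper reflects through the point $(a,\phi(a,x))$ via $2\phi(a,x)-\phi(2a-t,x)$ (and analogously at $b$), which also achieves first-order matching, and then iterates these point reflections indefinitely so that the extension covers all of $\R$ without any cut-off; the price is the slightly fussy recursive definition of $\Phi$, $\Phi_{1,0}$, $\Phi_{0,1}$ on the bands $[a-k(b-a),b-k(b-a))$ and $(a+k(b-a),b+k(b-a)]$. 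You instead use a single Hestenes two-term reflection $3\phi(2a-t,x)-2\phi(3a-2t,x)$ on a band of width $(b-a)/2$ (your checks (a)--(c) are all correct: the arguments stay in $[a,b]$, the values match, and the coefficients $3,-2$ solve the first-order matching conditions $3-2=1$ and $-3+4=1$), and then kill the extension with a $\mathcal{C}^1$ cut-off $\chi$ whose value and derivative vanish at $a-\delta$ and $b+\delta$; the product rule then shows that all partials of $\chi\widetilde\Phi$ tend to $0$ at those junctions, so the glue with the zero function is $\mathcal{C}^1$. Your route buys a finite, non-recursive formula at the cost of introducing and verifying the cut-off; the paper's route avoids the cut-off at the cost of an infinite family of reflections. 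Both are complete proofs of the lemma.
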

\begin{proof}[Proof of Lemma~\ref{lemma:ExtendedF}]
	Throughout this proof let  
	$ \Phi \colon \R \times V \to W $
	be the function which satisfies  
	for all 
	$ t \in \mathbb{R} $, 
	$ x \in V $,
	$ k \in \N $
	that
	\begin{equation}
	\begin{split}
	\label{eq:DefineFirst}
	&
	\Phi(t,x) 
	=
	\begin{cases}
	\phi(t,x) 
	&\colon 
	(t,x) \in [a,b] \times V 
	\\
	2 \phi( a, x ) 
	- 
	\phi( 2 a - t, x) 
	&\colon (t,x) 
	\in [ a - ( b - a ), a ) \times V 
	\\
	2 \phi( b, x) 
	- 
	\phi( 2 b - t, x) 
	&\colon (t,x) 
	\in 
	( b, b + ( b - a ) ] \times V
	\end{cases} 
	\end{split}
	\end{equation}
	and
	{\small
		\begin{equation} 
		\begin{split}
		\label{eq:DefineAll}
		%
		\Phi(t,x)
		&=
		\begin{cases}
		\phi(t,x) 
		&\colon 
		(t,x) \in [a,b] \times V 
		\\
		2 \Phi( b - k (b - a), x ) 
		- 
		\Phi( 2(b-k(b-a)) - t, x) 
		&\colon (t,x) 
		\in [ a-k(b-a), b-k(b-a) ) \times V 
		\\
		2 \Phi( a+k(b-a), x) 
		- 
		\Phi( 2(a+k(b-a)) - t, x) 
		&\colon (t,x) 
		\in 
		( a + k(b-a), b+k(b-a) ] \times V
		\end{cases} 
		,
		\end{split}
		\end{equation}
	}let
	$ \Phi_{1,0} \colon \R \times V \to W $ be the function
	which satisfies  
	for all
	$ t \in \mathbb{R} $, 
	$ x \in V $,
	$ k \in \N $
	that
	\begin{equation}
	\begin{split}
	\label{eq:def_per_1}
	\Phi_{1,0}(t,x)
	=
	\begin{cases} 
	(  \tfrac{\partial}{\partial t } \phi)(t,x) 
	&\colon 
	(t,x) \in [a,b] \times V 
	\\ 
	\Phi_{1,0}( 2 ( b - k (b-a) ) - t, x) 
	&\colon (t,x) 
	\in [ a - k( b - a ), b - k( b - a ) ) \times V 
	\\ 
	\Phi_{1,0}( 2( a + k (b-a) ) - t, x) 
	&\colon (t,x) 
	\in 
	( a + k(b-a), b + k(b-a) ] \times V
	\end{cases} 
	,
	\end{split}
	\end{equation}
	and let
		$ \Phi_{0,1}\colon \R \times V \to L(V, W) $
		be the function
		which satisfies 
		for all
	$ t \in \R $,
	$ x \in V $,
	$ k \in \N $
	that
	{\scriptsize
		\begin{equation}
		\begin{split}
		\label{eq:def_per_2}
		\Phi_{0,1}(t,x)
		&=
		\begin{cases} 
		(  \frac{\partial}{\partial x } \phi)(t,x) 
		&\colon 
		(t,x) \in [a,b] \times V 
		\\
		2 \Phi_{0,1}( b - k(b-a), x ) 
		- 
		\Phi_{0,1}( 2 ( b - k (b-a) ) - t, x) 
		&\colon (t,x) 
		\in [ a - k(b-a), b - k (b-a) ) \times V 
		\\
		2 \Phi_{0,1}( a + k(b-a), x) 
		- 
		\Phi_{0,1}( 2( a + k(b-a) ) - t, x) 
		&\colon (t,x) 
		\in 
		( a + k ( b - a ), b + k ( b - a ) ] \times V
		\end{cases} 
		.
		\end{split}
		\end{equation}	
}Note that 
the fact that
$ [a,b] \times V \ni (t,x) 
\mapsto \phi(t,x) \in W $
is continuous 
ensures that
\begin{equation} 
\label{eq:Extension is continuous}
%
\Phi
\in 
\mathcal{C}( \R \times V, W ) 
.
\end{equation} 
Next observe that for all
	$ t \in \R $, $ x \in V $ it holds that
	$ \R \ni s \mapsto \Phi(s,x) \in W $
	is differentiable
	and that
	\begin{equation} 
	\label{eq:derivatives 1,0 coincide}
	\Phi_{1,0}(t,x) 
	= 
	( \tfrac{\partial}{\partial t } \Phi )(t,x) 
	.
	\end{equation}
	Furthermore, note that the fact that
	$ ( [a,b] \times V \ni (t,x) \mapsto ( \frac{\partial }{\partial t }\phi )(t,x) \in W ) \in \mathcal{C} ( [a,b] \times V, W ) $
	assures that
	\begin{equation} 
	\label{eq:Cont1}
	\Phi_{1,0} 
	\in \mathcal{C} ( \R \times V , W ) 
	.
	\end{equation}
	Combining this and~\eqref{eq:derivatives 1,0 coincide}
	proves that 
	\begin{equation}
	\label{eq:derivative 1,0 is continuous}
	( \R \times V  \ni (t ,x) \mapsto 
	( \tfrac{ \partial }{ \partial t } \Phi )(t,x)
	\in W )
	\in \mathcal{C} ( \R\times V , W ) 
	.
	\end{equation}
	In addition, note that
	for all 
	$ t \in \R $,
	$ x \in V $ 
	it holds that
	$ V \ni v \mapsto \Phi(t,v) \in W $
	is differentiable
	and
	that 
	\begin{equation} 
	\label{eq:derivatives 0,1 coincide}
	\Phi_{0,1}(t,x) 
	= 
	( \tfrac{\partial }{\partial x } \Phi)(t,x) 
	.
	\end{equation}
	Moreover, observe that the fact that
	$ ( [a,b] \times V \ni (t,x) \mapsto 
	( \frac{\partial}{\partial x } \phi) (t,x) \in L(V,W) ) 
	\in \mathcal{C} ( [a,b] \times V, L(V, W) ) $
	implies that
	\begin{equation} 
	\label{eq:Cont2}
	\Phi_{0,1} 
	\in \mathcal{C} ( \R \times V , L(V, W) ) 
	.
	\end{equation}
	This and~\eqref{eq:derivatives 0,1 coincide} 
	show that
	\begin{equation}
	\label{eq:derivative 0,1 is continuous} 
	( \R \times V \ni (t,x) \mapsto 
	( \tfrac{\partial}{\partial x } \Phi) (t,x) \in L(V,W) ) 
	\in \mathcal{C} (\R \times V, L(V, W) )
	.
	\end{equation}
Combining~\eqref{eq:Extension is continuous},
\eqref{eq:derivative 1,0 is continuous},
\eqref{eq:derivative 0,1 is continuous},
	and, e.g., 
	Coleman~\cite[Corollary~3.4]{Coleman12}
	completes the proof of Lemma~\ref{lemma:ExtendedF}.
\end{proof}
\begin{corollary}
\label{corollary:Alekseev_grobner} 
Let 
$ (V, \left\| \cdot \right\|_V) $ 
be a nontrivial $ \R $-Banach space, 
let
$ T \in (0, \infty) $,
$ f \in \mathcal{C}^{0,1}( [0, T] \times V, V) $,  
let
$ Y, E \colon [0,T] \to V $
be 
strongly measurable functions, 
for
every 
$ x \in V $,
$ s \in [0,T] $
let
$ X_{s,(\cdot)}^x
=
(X_{s,t}^x)_{t \in [s,T]} 
\colon [s,T] \to V $
be a continuous function
which satisfies
for all
$ t \in [s,T] $
that  
$ X_{s,t}^x = 
x
+
\int_s^t f( \tau , X_{s, \tau }^x) \,d\tau $,
and assume for all
$ t \in [0,T] $ that
$ \int_0^T 
[
\| f ( \tau, Y_\tau ) \|_V 
+
\| E_\tau \|_V
]
\, d\tau < \infty $ 
and 
$ Y_t 
= Y_0 + \int_0^t [ f(\tau, Y_\tau   ) + E_\tau  ] 
\,
d\tau $.
Then
\begin{enumerate}[(i)]
\item \label{item:0} it holds that
$ ( \{(u,r)\in[0,T]^2:u \leq r\}
\times V \ni (s,t,x) \mapsto X_{s,t}^x \in V )
\in \mathcal{C}^{0,0,1}( \{(u,r)\in[0,T]^2:u \leq r\}
\times V, V ) $,	
\item \label{item:1} it holds 
that 
$ \{(u,r)\in(0,T)^2:u< r\}
\times V \ni (s,t,x) \mapsto X_{s,t}^x \in V $
is continuously
differentiable,
\item \label{item:2} it holds 
for all 
$ t \in [0 ,T] $
that
$ [0,t] \ni \tau
\mapsto
 (
\tfrac{\partial}{\partial x} X_{\tau, t}^{Y_\tau} 
 ) E_\tau
\in V $
is strongly
measurable,
\item \label{item:3} it holds for all 
$ t \in [0,T] $
that
$ \int_{0}^t
 \| 
 (
\tfrac{\partial}{\partial x} X_{\tau, t}^{Y_\tau} 
 ) E_\tau
 \|_V
\, 
d\tau < \infty $,
and
\item \label{item:4}
it holds for all
$ s \in [0,T] $, $ t \in [s,T] $
that
\begin{equation}
Y_t 
= 
X_{s, t}^{Y_s}
+
\int_{s}^t 
 (
\tfrac{\partial}{\partial x} X_{\tau, t}^{Y_\tau} 
 ) E_\tau
\, 
d\tau. 
\end{equation}
\end{enumerate}
\end{corollary}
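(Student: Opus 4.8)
The plan is to obtain items~\eqref{item:0} and~\eqref{item:1} directly from Lemma~\ref{lemma:Differentiable}, and to derive items~\eqref{item:2}--\eqref{item:4} by specializing the abstract Alekseev--Gr\"obner formula of Proposition~\ref{proposition:Banach_space_alekseev_grobner} to the flow $(s,t,x)\mapsto X_{s,t}^x$ associated with $f$. Item~\eqref{item:0} is precisely item~\eqref{item:initialTimeDiff} of Lemma~\ref{lemma:Differentiable}. For item~\eqref{item:1}, I would observe that on the open set $\{(u,r)\in(0,T)^2\colon u<r\}\times V$ the partial derivatives $\tfrac{\partial}{\partial s}X_{s,t}^x$, $\tfrac{\partial}{\partial t}X_{s,t}^x$ (which equals $f(t,X_{s,t}^x)$), and $\tfrac{\partial}{\partial x}X_{s,t}^x$ all exist and are jointly continuous (see items~\eqref{item:Diff_s}, \eqref{item:JointContinuous}, \eqref{item:initialTimeDiff}, \eqref{item:Differentiable_t}, \eqref{item:Diff_t}, and~\eqref{item:Formula_t} of Lemma~\ref{lemma:Differentiable}), so that the standard fact that continuity of all first-order partial derivatives on an open set implies continuous Fr\'echet differentiability (cf., e.g., Coleman~\cite[Corollary~3.4]{Coleman12}, exactly as in the proof of Lemma~\ref{lemma:ExtendedF}) yields item~\eqref{item:1}.

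For the remaining items I would fix $t\in(0,T]$ and $s_\ast\in[0,t)$ and apply Proposition~\ref{proposition:Banach_space_alekseev_grobner} with $t_0=s_\ast$, final time $t$, with $\phi$ the identity map on $V$, with $F=(\,[s_\ast,t]\ni u\mapsto Y_u\in V\,)$, with the perturbation ``$f$'' of Proposition~\ref{proposition:Banach_space_alekseev_grobner} taken to be $(\,[s_\ast,t]\ni u\mapsto f(u,Y_u)+E_u\in V\,)$, and with $\Phi_{u,r}(x)=X_{u,r}^x$, $\dot\Phi_{u,r}(x)=f(r,X_{u,r}^x)$, $\Phi_{u,r}^\star(x)=\tfrac{\partial}{\partial x}X_{u,r}^x$. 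The regularity hypotheses of Proposition~\ref{proposition:Banach_space_alekseev_grobner} on $\Phi$ (continuous differentiability of $(u,x)\mapsto\Phi_{u,t}(x)$ on $[s_\ast,t]\times V$, of $u\mapsto\Phi_{u,t_1}(x)$ on $[s_\ast,t_1]$, and of $(s,\tau)\mapsto\Phi_{s,\tau}(x)$ on the open triangle, together with continuity of $u\mapsto\dot\Phi_{u,t_1}(x)$) follow from Lemma~\ref{lemma:Differentiable}, the Coleman-type statement above, and the continuity of $f$ and of operator application; the identities $X_{t_1,t_1}^x=x$ and $X_{t_1,t_3}^x=X_{t_2,t_3}^{X_{t_1,t_2}^x}$ follow from the defining integral equation and from Lemma~\ref{lemma:flow_property}; the relation $\tfrac{\partial}{\partial t_2}X_{t_1,t_2}^x=f(t_2,X_{t_1,t_2}^x)$ is item~\eqref{item:Formula_t} of Lemma~\ref{lemma:Differentiable}; $\tfrac{\partial}{\partial x}X_{t_1,t}^x=\Phi_{t_1,t}^\star(x)$ holds by our definition and item~\eqref{item:identity11} of Lemma~\ref{lemma:Differentiable}; and the condition $\int_{s_\ast}^t\|f(u,Y_u)+E_u\|_V\,du<\infty$, the strong measurability of $u\mapsto f(u,Y_u)+E_u$ (using that $Y,E$ are strongly measurable and $f$ is continuous, cf.\ Lemma~\ref{lemma:stronglyMeasurable}), and the representation $Y_{t_1}=Y_{s_\ast}+\int_{s_\ast}^{t_1}[f(u,Y_u)+E_u]\,du$ are contained in the assumptions of the present corollary.

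Since $\phi$ is the identity, its Fr\'echet derivative is constantly the identity in $L(V)$, and since $\dot\Phi_{\tau,\tau}(x)=f(\tau,X_{\tau,\tau}^x)=f(\tau,x)$, at the point $F(\tau)=Y_\tau$ the integrand in the conclusion of Proposition~\ref{proposition:Banach_space_alekseev_grobner} collapses to
\begin{equation}
\big(\tfrac{\partial}{\partial x}X_{\tau,t}^{Y_\tau}\big)\big[\,f(\tau,Y_\tau)-\big(f(\tau,Y_\tau)+E_\tau\big)\,\big]
=
-\big(\tfrac{\partial}{\partial x}X_{\tau,t}^{Y_\tau}\big)E_\tau
\end{equation}
while its left-hand side collapses to $X_{s_\ast,t}^{Y_{s_\ast}}-Y_t$. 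Hence Proposition~\ref{proposition:Banach_space_alekseev_grobner} yields that $[s_\ast,t]\ni\tau\mapsto(\tfrac{\partial}{\partial x}X_{\tau,t}^{Y_\tau})E_\tau\in V$ is strongly measurable and integrable and that $Y_t=X_{s_\ast,t}^{Y_{s_\ast}}+\int_{s_\ast}^t(\tfrac{\partial}{\partial x}X_{\tau,t}^{Y_\tau})E_\tau\,d\tau$. Taking $s_\ast=0$ (and noting that for $t=0$ the assertions are vacuous) establishes items~\eqref{item:2} and~\eqref{item:3}, and taking $s_\ast=s$ for arbitrary $s\in[0,t)$ while handling $s=t$ by hand (where the identity is just $Y_t=X_{t,t}^{Y_t}$) establishes item~\eqref{item:4}.

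All the substance sits in Lemma~\ref{lemma:Differentiable} (and, through Corollary~\ref{Corollary:UnifCont}, in the preceding sections); given that lemma, the remaining work is bookkeeping: matching the data of Proposition~\ref{proposition:Banach_space_alekseev_grobner}, checking that $t_0=s_\ast<t$ so the proposition applies and disposing of $s=t$ separately, verifying strong measurability of the composition $\tau\mapsto f(\tau,Y_\tau)+E_\tau$, and exploiting the cancellation $\dot\Phi_{\tau,\tau}(Y_\tau)-(f(\tau,Y_\tau)+E_\tau)=-E_\tau$ that turns the abstract identity into the Alekseev--Gr\"obner formula. The only mildly delicate point I anticipate is this strong measurability of the composition (and of $\tau\mapsto(\tfrac{\partial}{\partial x}X_{\tau,t}^{Y_\tau})E_\tau$), but the latter is part of the output of the proposition and the former is routine.
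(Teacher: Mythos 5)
Your proposal is correct and follows essentially the same route as the paper: items~(i) and~(ii) from Lemma~\ref{lemma:Differentiable} together with the Coleman-type criterion, and items~(iii)--(v) by applying Proposition~\ref{proposition:Banach_space_alekseev_grobner} to the flow $\Phi_{s,t}(x)=X_{s,t}^x$ with $\phi=\operatorname{Id}_V$, $F=Y$, perturbed vector field $\tau\mapsto f(\tau,Y_\tau)+E_\tau$, using Lemma~\ref{lemma:flow_property} and the cancellation $\dot\Phi_{\tau,\tau}(Y_\tau)-(f(\tau,Y_\tau)+E_\tau)=-E_\tau$. The only cosmetic difference is that you identify $\dot\Phi_{u,r}(x)=f(r,X_{u,r}^x)$ directly via item~\eqref{item:Formula_t} of Lemma~\ref{lemma:Differentiable}, where the paper first introduces $\dot\Phi$ as the continuous extension of the time derivative and recovers this identity at the end via Lemma~\ref{lemma:extensions}; both are equivalent.
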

\begin{proof}[Proof
of Corollary~\ref{corollary:Alekseev_grobner}]
Throughout this proof let
$ \angle_T \subseteq [0,T]^2 $ be the set given by
$ \angle_T = \{ (s, t) \in [0,T]^2 \colon s \leq t \} $ 
and let
$ \Phi \colon \angle_T \times V \to V $
be the function which satisfies
for all
$ (s,t) \in \angle_T $, 
$ x \in V $
that
$ \Phi_{s,t} (x) = X_{s,t}^x $.   
Note that,
e.g., 
Coleman~\cite[Corollary~3.4]{Coleman12}
and
items~\eqref{item:Diff_s},
\eqref{item:JointContinuous},
\eqref{item:initialTimeDiff}, 
\eqref{item:Differentiable_t},
and~\eqref{item:Diff_t} 
of 
Lemma~\ref{lemma:Differentiable}
(with
$ V = V $,
$ T = T $, 
$ f = f $,
$ X_{s,t}^x = X_{s,t}^x $
for 
$ (s, t ) \in \angle_T $, 
$ x \in V $
in the notation of
items~\eqref{item:Diff_s},
\eqref{item:JointContinuous},
\eqref{item:initialTimeDiff}, 
\eqref{item:Differentiable_t},
and~\eqref{item:Diff_t}  of
Lemma~\ref{lemma:Differentiable}) 
establish items~\eqref{item:0} and~\eqref{item:1}.
It thus remains to prove items~\eqref{item:2}--\eqref{item:4}.
For this observe that 
 item~\eqref{item:Diff_t2} 
 of 
 Lemma~\ref{lemma:Differentiable}
 (with
 $ V = V $,
 $ T = T $, 
 $ f = f $,
 $ X_{s,t}^x = X_{s,t}^x $
 for 
 $ (s, t ) \in \angle_T $, 
 $ x \in V $
 in the notation of
 item~\eqref{item:Diff_t2} of
 Lemma~\ref{lemma:Differentiable}) 
 ensures that
there exists a unique
continuous function 
$ \dot \Phi \colon \angle_T \times V \to V $
 which satisfies
for all
$ s \in [0,T) $,
$ t \in [s,T] $,
$ x \in V $ 
that
\begin{equation} 
\dot \Phi_{s, t}(x) 
= \tfrac{\partial}{\partial t} ( \Phi_{s, t}(x) ) 
.
\end{equation} 
Next observe that item~\eqref{item:0}
ensures that there exists a function
$ \Phi^\star  \colon \angle_T \times V \to L(V) $  
which satisfies 
for all
$ (s,t) \in \angle_T $, 
$ x \in V $
that
\begin{equation} 
\Phi_{s, t}^\star(x) =
\tfrac{\partial}{\partial x}
(
\Phi_{s, t}(x) 
).
\end{equation} 
Moreover, note that 
items~\eqref{item:Diff_s},
\eqref{item:JointContinuous}, 
and~\eqref{item:initialTimeDiff}
of
Lemma~\ref{lemma:Differentiable}
(with
$ V = V $,
$ T = T $, 
$ f = f $,
$ X_{s,t}^x = X_{s,t}^x $
for 
$ (s, t ) \in \angle_T $, 
$ x \in V $
in the notation of
items~\eqref{item:Diff_s},
\eqref{item:JointContinuous}, 
and~\eqref{item:initialTimeDiff} of
Lemma~\ref{lemma:Differentiable})
and
Lemma~\ref{lemma:ExtendedF}
(with
$ V = V $,
$ W = V $,
$ a = s $,
$ b = t $,
$ \phi =  ( [s,t] \times V \ni (u,x) \mapsto X_{u,t}^x \in V ) $
for
$ s \in [0,T) $,
$ t \in (s,T] $
in the notation of
Lemma~\ref{lemma:ExtendedF})
ensure that
for all
$ s \in [0,T) $,
$ t \in (s,T] $
it holds that
\begin{equation} 
( [s,t] \times V \ni (u,x) \mapsto X_{u,t}^x \in V )
\in \mathcal{C}^1( [s,t] \times V, V ) 
.
\end{equation} 
Combining item~\eqref{item:1}, 
Lemma~\ref{lemma:flow_property}
(with
$ V = V $,
$ T = T $,
$ f = f $,
$ X_{s, t}^x = X_{s, t}^x $
for
$ (s, t ) \in \angle_T $,
$ x \in V $
in the notation of Lemma~\ref{lemma:flow_property}),
items~\eqref{item:Diff_s}  
and~\eqref{item:Differentiable_t} 
of
Lemma~\ref{lemma:Differentiable}
(with
$ V = V $,
$ T = T $, 
$ f = f $,
$ X_{s,t}^x = X_{s,t}^x $
for 
$ (s, t ) \in \angle_T $, 
$ x \in V $
in the notation of
items~\eqref{item:Diff_s}    
and~\eqref{item:Differentiable_t}  
of
Lemma~\ref{lemma:Differentiable}),
and
Proposition~\ref{proposition:Banach_space_alekseev_grobner}
(with
$ V = V $,
$ t_0 = s $,
$ t = t $,
$ \phi = \operatorname{Id}_V $,
$ F = Y |_{[s, t]} $,
$ \Phi = \Phi |_{  \{ (u, r) 
	\in [s, t]^2\colon u \leq r \} \times V} $,
$ \dot \Phi_{v,w} = \dot \Phi_{v,w} $,
$ \Phi_{v,w}^\star = \Phi_{v,w}^\star $,
$ f = ( [s,t] \ni \tau \mapsto f(\tau, Y_\tau) + E_\tau \in V ) $
for 
$ s \in [0,T) $,
$ t \in (s,T] $,
$ v \in [s,t] $,
$ w \in [v, t] $
in the notation of 
Proposition~\ref{proposition:Banach_space_alekseev_grobner})
hence
demonstrates
that for all 
$ (s,t) \in \angle_T $
it holds that
\begin{equation}
\label{eq:Strongly}
[s,t]
\ni 
\tau
\mapsto
\Phi_{\tau, t}^\star(Y_\tau)
\big[
\dot
\Phi_{\tau, \tau}(Y_\tau)
-
f( \tau, Y_\tau ) - E_\tau
\big]
\in V
\text{ is strongly measurable},
\end{equation}
\begin{equation}
\int_s^t
\big\|
\Phi_{\tau, t}^\star(Y_\tau)
\big[
\dot
\Phi_{\tau, \tau}(Y_\tau)
-
f( \tau, Y_\tau ) - E_\tau
\big]
\big\|_V
\,
d \tau 
< \infty,
\text{ and}
\end{equation}
\begin{equation}
\begin{split}
\label{eq:application_Alekseev}
&
X_{s,t}^{Y_s } - Y_t 
=
\int_s^t 
\Phi_{\tau, t}^\star(Y_\tau)
\big[
\dot
\Phi_{\tau, \tau}(Y_\tau)
-
f( \tau, Y_\tau ) - E_\tau
\big]
\, d\tau
.
\end{split}
\end{equation}
Moreover, note that
Lemma~\ref{lemma:extensions}
(with $ V = V $,
$ a = 0 $,
$ b = t $,
$ f = ( [0,t] \ni s \mapsto f( s, X_{s, t }^x ) \in V ) $,
$ F = ( [0,t] \ni s \mapsto X_{s,t}^x \in V ) $
for
$ t \in [0,T] $,
$ x \in V $
in the notation of
Lemma~\ref{lemma:extensions})
shows that
for all 
$ \tau \in [0, T) $,
$ t \in [\tau, T] $,
$ x \in V $
it holds that
\begin{equation}
\begin{split}
\label{eq_derivatives}
\dot
\Phi_{\tau, t}(x)
=
f(t, X_{\tau, t}^x )
.
\end{split}
\end{equation}
Combining this, \eqref{eq:Strongly}--\eqref{eq:application_Alekseev}, 
and the fact that
$ \forall\, \tau \in [0,T) $, $ t \in [\tau,T] $, $ x \in V \colon \Phi_{\tau, t}^\star(x)
=
\tfrac{\partial}{\partial x}
X_{\tau, t}^x  $
establishes items~\eqref{item:2}--\eqref{item:4}.
The proof
of Corollary~\ref{corollary:Alekseev_grobner}
is thus completed.
\end{proof}
%
%
%
%
\subsubsection*{Acknowledgements}
This project has been partially supported through the SNSF-Research project $ 200021\_156603 $ ''Numerical 
approximations of nonlinear stochastic ordinary and partial differential equations''.
\newpage
\bibliographystyle{acm}
\bibliography{../Bib/bibfile}

\def\cprime{$'$} \def\cprime{$'$}
\begin{thebibliography}{10}

\bibitem{Alekseev1961}
{\sc Alekseev, V.~M.}
\newblock An estimate for the perturbations of the solutions of ordinary
  differential equations.
\newblock {\em Vestnik Moskov. Univ. Ser. I Mat. Meh.}, 2 (1961), 28--36.

\bibitem{Brezis2011}
{\sc Brezis, H.}
\newblock {\em Functional analysis, {S}obolev spaces and partial differential
  equations}.
\newblock Universitext. Springer, New York, 2011.

\bibitem{Coleman12}
{\sc Coleman, R.}
\newblock {\em Calculus on normed vector spaces}.
\newblock Universitext. Springer, New York, 2012.

\bibitem{DeitmarEchterhoff2014}
{\sc Deitmar, A., and Echterhoff, S.}
\newblock {\em Principles of harmonic analysis}, second~ed.
\newblock Universitext. Springer, Cham, 2014.

\bibitem{Driver2003}
{\sc Driver, B.}
\newblock {\em Analysis Tools with Applications}.
\newblock Preprint, 2003.
\newblock Available online at
  \url{http://www.math.ucsd.edu/~bdriver/231-02-03/Lecture_Notes/PDE-Anal-Book/analpde1.pdf}.

\bibitem{Grobner1960}
{\sc Gr\"obner, W.}
\newblock {\em Die {L}ie-{R}eihen und ihre {A}nwendungen}, vol.~3 of {\em
  Mathematische Monographien}.
\newblock VEB Deutscher Verlag der Wissenschaften, Berlin, 1960.

\bibitem{HairerNonstiffProblems}
{\sc Hairer, E., N{\o}rsett, S.~P., and Wanner, G.}
\newblock {\em Solving ordinary differential equations. {I}}, second~ed.,
  vol.~8 of {\em Springer Series in Computational Mathematics}.
\newblock Springer-Verlag, Berlin, 1993.
\newblock Nonstiff problems.

\bibitem{Iserles2008}
{\sc Iserles, A.}
\newblock {\em A First Course in the Numerical Analysis of Differential
  Equations}, 2nd~ed.
\newblock Cambridge University Press, New York, NY, USA, 2008.

\bibitem{Iserles1993}
{\sc Iserles, A., and S\"oderlind, G.}
\newblock Global bounds on numerical error for ordinary differential equations.
\newblock {\em J. Complexity 9}, 1 (1993), 97--112.
\newblock Festschrift for Joseph F. Traub, Part I.

\bibitem{JentzenDiyoraWelti2017}
{\sc Jentzen, A., Salimova, D., and Welti, T.}
\newblock {Strong convergence for explicit space-time discrete numerical
  approximation methods for stochastic Burgers equations}.
\newblock {\em arXiv:1710.07123\/} (2017), 60 pages.

\bibitem{Niesen2003}
{\sc Niesen, J.}
\newblock A priori estimates for the global error committed by {R}unge-{K}utta
  methods for a nonlinear oscillator.
\newblock {\em LMS J. Comput. Math. 6\/} (2003), 18--28.

\bibitem{PrevotRoeckner2007}
{\sc Pr\'ev\^ot, C., and R\"ockner, M.}
\newblock {\em A concise course on stochastic partial differential equations},
  vol.~1905 of {\em Lecture Notes in Mathematics}.
\newblock Springer, Berlin, 2007.

\bibitem{Rudin76}
{\sc Rudin, W.}
\newblock {\em Principles of mathematical analysis}, third~ed.
\newblock McGraw-Hill Book Co., New York-Auckland-D\"usseldorf, 1976.
\newblock International Series in Pure and Applied Mathematics.

\end{thebibliography}

\end{document}